\newcommand{\lra}{\longrightarrow}
\def\xx{\text{\boldmath $x$}}
\def\yy{\text{\boldmath $y$}}
\newcommand{\md}{\mathsf{mod}}
\newcommand{\Md}{\mathsf{Mod}}
\newcommand{\rep}{\mathsf{rep}}
\newcommand{\st}{\stackrel}
\newcommand{\im}{{\rm{Im}}}
\newcommand{\add} {\mathsf{add}}
\newcommand{\m}{{\mathfrak{m}}}
\newcommand{\p}{{\mathfrak{p}}}
\newcommand{\cok}{{\rm{Coker}}}
\newcommand{\Ker}{{\rm{Ker}}}
\newcommand{\Tr}{\mathrm{Tr}}
\newcommand{\mcm}{\rm{mCM}}
\newcommand{\ra}{\mathsf{rad}}
\newcommand{\op}{\rm{op}}
\newcommand{\Hom}{{\mathsf{Hom}}}
\newcommand{\h}{{\mathsf{h}}}
\newcommand{\End}{{\mathsf{End}}}
\newcommand{\Ext}{\mathsf{{Ext}}}
\newcommand{\F}{\mathcal{{F}}}
\newcommand{\X}{\mathcal{{X}}}
\DeclareMathOperator{\ind}{\mathtt{ind}}
\theoremstyle{definition}
\newtheorem{theorem}{Theorem}[section]
\newtheorem{cor}[theorem]{Corollary}
\newtheorem{lemma}[theorem]{Lemma}
\newtheorem{prop}[theorem]{Proposition}
\newtheorem{definition}[theorem]{Definition}
\newtheorem{remark}[theorem]{Remark}
\theoremstyle{plain}
\theoremstyle{definition}
\numberwithin{equation}{section}
\newcounter{hours}
\newcounter{minutes}
\begin{document}

\title[orders of bounded and strongly unbounded lattice type]
{orders of bounded and strongly unbounded lattice type}

\author[Fotouhi, Martsinkovsky and Salarian] {Fahimeh Sadat Fotouhi, Alex Martsinkovsky, and Shokrollah Salarian}

\address{Department of Mathematics, University of Isfahan, P.O.Box: 81746-73441, Isfahan, Iran }
 \email{fsfotuhi@sci.ui.ac.ir}
 
\address{Northeastern University
Department of Mathematics
360 Huntington Avenue,
Boston, MA 02115, USA}
 \email{alexmart@neu.edu}

\address{Department of Mathematics, University of Isfahan, P.O.Box: 81746-73441, Isfahan,
 Iran and \\ School of Mathematics, Institute for Research in Fundamental Science (IPM), P.O.Box: 19395-5746, Tehran, Iran}
 \email{Salarian@ipm.ir}

\date{\today, \setcounter{hours}{\time/60} \setcounter{minutes}{\time-\value{hours}*60} \thehours\,h\ \theminutes\,min}

\subjclass[2010]{16G30, 16H20, 16G60}

\keywords{Brauer-Thrall conjectures, order, lattice, bounded lattice type, strongly unbounded lattice type}

\thanks{}

\begin{abstract}
Brauer and Thrall conjectured that a finite-dimensional algebra over a field of  bounded representation type is actually of finite representation type and a finite-dimensional algebra (over an infinite field) of infinite representation type has strongly unbounded representation type. These conjectures, now theorems, are our motivation for studying (generalized) orders of bounded and strongly unbounded lattice type. To each lattice over an order we assign a numerical invariant, $\underline{\h}$-length, measuring Hom modulo projectives. We show that an order of bounded lattice type is actually of finite lattice type, and if there are infinitely many non-isomorphic  indecomposable lattices of the same $\underline{\h}$-length, then the order has strongly unbounded lattice type.

For a hypersurface $R=k[[x_0,...,x_d]]/(f)$, we show that $R$ is of bounded (respectively, strongly unbounded) lattice type if and only if the double branched cover $R^{\sharp}$ of $R$ is of bounded (respectively, strongly unbounded) lattice type. This is an analog of a result of Kn\"{o}rrer  and Buchweitz-Greuel-Schreyer for rings of finite  mCM type. Consequently, it is proved that $R$ has strongly unbounded lattice type whenever $k$ is infinite.
\end{abstract}

\maketitle

\tableofcontents

\section{Introduction}
In~\cite{jans1957indecomposable}, Jans states that R. Brauer and R. M. Thrall  conjectured that a finite-dimensional algebra 
over a field $k$ of  bounded representation type (meaning that
there is a bound on the lengths of the indecomposable finitely generated modules) is actually of finite representation type. They also conjectured that a finite-dimensional algebra over an infinite field of infinite representation type has strongly unbounded representation type (meaning that there is an infinite sequence $n_1<n_2<\cdots$  of positive integers such that there are, for any $i$, infinitely many non-isomorphic indecomposable modules of $k$-dimension $n_i$).
Both conjectures are now theorems. The first Brauer-Thrall conjecture was proved by Roiter \cite{roiter1968unbounded}. Later, Ringel \cite{ringel2013auslander,ringel2008first} proved it for artin algebras.  Bautista \cite{bautista1985algebras} and Bongartz \cite{bongartz1985indecomposables}  proved the second conjecture under the extra assumption that $k$ is algebraically closed; see also \cite{ringel1980report}. Auslander \cite{auslander1976applications} proved a theorem that is stronger than the first Brauer-Thrall conjecture and weaker than the second Brauer-Thrall conjecture. He called it Brauer-Thrall $1\frac{1}{2}$. It asserts that  a finite-dimensional algebra having infinitely many non-isomorphic indecomposable modules of some fixed $k$-dimension is actually of strongly unbounded representation type. Smal\o~\cite{smalo1980inductive} proved this theorem for artin algebras using almost split sequences and the Harada-Sai lemma.


One of the classical situations where representation theory of rings has been studied most extensively is lattices over orders; see for example \cite{auslander1978functors} and  \cite{auslander1985survey}. Motivated by the Brauer-Thrall conjectures for finite-dimensional algebras, in this paper we investigate boundedness and strong unboundedness for lattices over orders. To explain our results in more detail, we first establish notation. Let $(R,\m)$ be a $d$-dimensional commutative noetherian complete Cohen-Macaulay local ring with a canonical module 
$\omega$ and let $\Lambda$ be an $R$-order (see Definition~\ref{D:order} for details). The category of all $\Lambda$-lattices will be denoted by 
$\Lambda$-$\textbf{lat}$ or simply $\textbf{lat}$. Denote by $S$ the direct sum of all simple $\Lambda$-modules and let $\alpha:G\lra S$ be a minimal $\textbf{lat}$-approximation (the existence of such an approximation is shown in Proposition~\ref{P:approximation}). For a given $\Lambda$-module~$M$, we set 
\[
\underline{\h}(M):=\underline{\Hom}_{\Lambda}(M,M\oplus G)
\]

We say that $M$ has finite $\underline{\h}$-length if $l_R(\underline{\h}(M))<\infty$. Notice that lattices always have finite $\underline{\h}$-length.
The order $\Lambda$  is said to be of finite $\textbf{lat}$-type if there are only finitely many non-isomorphic indecomposable lattices, and it is called of infinite $\textbf{lat}$-type if $\Lambda$ is not of finite $\textbf{lat}$-type.
Using the $\underline{\h}$-length to measure the complexity of
$\Lambda$-$\textbf{lat}$, we define the notions of bounded lattice type and strongly unbounded lattice type as usual; see Definitions \ref{d:strongly} and \ref{def1}.
\medskip

Now we summarize our results.
\begin{itemize}
 \item[1)]  An $R$-order  $ \Lambda$ is of bounded lattice type if and only if it is of finite $\textbf{lat}$-type.
 \medskip
 
 \item[2)]  If $\Lambda$-$\textbf{lat}$ contains infinitely many non-isomorphic indecomposable objects with the same $\underline{\h}$-length, then $\Lambda$ has strongly unbounded lattice type.

\end{itemize}
\medskip
It should be noted that if $\Lambda$-$\textbf{lat}$ contains uncountably many non-isomorphic indecomposable objects, then it has strongly unbounded 
$\textbf{lat}$-type. Moreover, $R$ being an $R$-order, gives rise to the equality $R$-$\textbf{lat}$ = mCM$_0$ where mCM$_0$ is the category of  all maximal Cohen-Macaulay modules that are free on the punctured spectrum of $R$. Now assume that $R$ is a one-dimensional complete  Cohen-Macaulay local ring containing an infinite field $k$ with multiplicity $e(R)> 2$.  A slight modification of the method used in the proof 
of~\cite[Theorem 2.5]{leuschke2013brauer}, shows that  $R$ has strongly unbounded \textbf{lat}-type, see Theorem~\ref{22}. Furthermore, at the end of Section 6, examples of orders having strongly unbounded \textbf{lat}-type are presented.

 In \cite{buchweitz1987cohen} and \cite{knorrer1987cohen}  Kn\"{o}rrer and  Buchweitz-Greuel-Schreyer, showed that the local ring of an isolated simple hypersurface singularity is of finite mCM type (i.e., the number of pairwise nonisomorphic indecomposable mCM modules is finite). More generally, they showed that the ring $R=k[[x_0,...,x_d]]/(f)$, where $k$ is a field of characteristic different from 2 and  $f$ is a nonzero and non-unit element of  $k[[x_0,...,x_d]]$,  is of  finite mCM type if and only if $R \simeq k[[x_0, \cdots x_d]]/(g+x_2^2+ \cdots + x_d^2)$ for some $g\in k[[x_0, x_1]]$ such that $k[[x_0, x_1]]/(g)$ is of  finite mCM type. In the final section of this paper, we  investigate an analog of that result for bounded and strongly unbounded lattice type. We show that $R$ has strongly unbounded lattice type if and only if the double branched cover $R^{\sharp}$ of $R$ has strongly unbounded  $\textbf{lat}$-type; see Theorem~\ref{strongly}. This result leads us to deduce that $R$ has  strongly unbounded lattice type whenever $e(R) \geq 3$ and $k$ is infinite; see Theorem~\ref{21}.

We should also mention that there have been attempts to use multiplicity to establish Brauer-Thrall type results for maximal Cohen-Macaulay modules over Cohen-Macaulay rings, as well as to obtain analogs for the results of Kn\"{o}rrer and Buchweitz-Greuel-Schreyer. For more details, see Remark \ref{30}.

%
%
Throughout this paper, by a module we will always mean a left module unless stated otherwise. For a ring $\Lambda$, the category of all (respectively, finitely generated) $\Lambda$-modules will be denoted by $\Md\Lambda$ (respectively, $\md\Lambda$).

\section{Generalities}

In this section we collect, for the convenience of the reader, the basic results and definitions that  will be needed later. All of them are known, at least, to the experts, and there are no new proofs here. The only novelty is a marginally more general definition of order. The orders we are about to define belong to the wider class of noetherian algebras, a notion introduced by Auslander~\cite[p. 49]{auslander1978functors}. Let $R$ be a commutative ring and $\Lambda$ an associative ring with identity.  $\Lambda$ is an $R$-algebra means that there is a ring homomorphism $R \lra \Lambda$ whose image is in the center of $\Lambda$. One says that~$\Lambda$ is a \textit{noetherian $R$-algebra} if $R$ is noetherian and $\Lambda$ is finitely generated when viewed as an $R$-module. Henceforth, all algebras will be assumed noetherian.

First, we look at the case when $R$ is a noetherian commutative local ring and 
$\Lambda$ is free when viewed as an $R$-module.
\begin{lemma}\label{L:op}
Under the above assumptions, the following conditions are equivalent:
\begin{itemize}
 \item[1)] $\Hom_{R}(\Lambda, R)$ is a projective $\Lambda^{op}$-module,
 
 \item[2)] $\Hom_{R}(\Lambda^{op}, R)$ is a projective $\Lambda$-module.
\end{itemize}
\end{lemma}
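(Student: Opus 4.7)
My plan is to recast both conditions as projectivity of a single $(\Lambda,\Lambda)$-bimodule on opposite sides, and then reduce the resulting symmetric claim to the classical left--right symmetry of quasi-Frobenius for finite-dimensional algebras.

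First, I set $M:=\Hom_R(\Lambda,R)$ and equip it with its canonical $(\Lambda,\Lambda)$-bimodule structure, namely $(\lambda f)(x):=f(x\lambda)$ and $(f\lambda)(x):=f(\lambda x)$ for $\lambda\in\Lambda$, $f\in M$, $x\in\Lambda$. Under the standard identification of left $\Lambda^{\mathrm{op}}$-modules with right $\Lambda$-modules, condition (1) becomes the statement that $M$ is projective as a right $\Lambda$-module. For condition (2), I observe that $\Lambda^{\mathrm{op}}$ and $\Lambda$ share the same underlying $R$-module, so $\Hom_R(\Lambda^{\mathrm{op}},R)=M$ as $R$-modules. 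A short direct calculation then shows that the natural left $\Lambda$-action on $\Hom_R(\Lambda^{\mathrm{op}},R)$, induced from the canonical left $\Lambda^{\mathrm{op}}$-structure on $\Lambda^{\mathrm{op}}$, is given by the same formula $(\lambda f)(x)=f(x\lambda)$ that defines the left action of the bimodule $M$. Hence condition (2) asserts that $M$ is projective as a left $\Lambda$-module, and the lemma reduces to the symmetric statement that $M$ is left projective if and only if right projective.

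Next, I reduce this to the Artinian setting. Put $k:=R/\mathfrak m$, $\bar\Lambda:=\Lambda/\mathfrak m\Lambda$, and $\bar M:=M/\mathfrak m M$. Since $\Lambda$ is $R$-free of finite rank, the adjunction $\Hom_R(\Lambda,R)\otimes_R k\cong\Hom_R(\Lambda,k)\cong\Hom_k(\bar\Lambda,k)$ provides a canonical $(\bar\Lambda,\bar\Lambda)$-bimodule isomorphism $\bar M\cong\Hom_k(\bar\Lambda,k)$. After completing at $\mathfrak m$ (which preserves projectivity for finitely generated modules by faithful flatness), $\hat\Lambda$ is semiperfect, and a standard Nakayama-and-lifting-idempotents argument yields that $M$ is projective as a left (respectively, right) $\Lambda$-module if and only if $\Hom_k(\bar\Lambda,k)$ is projective as a left (respectively, right) $\bar\Lambda$-module. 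The proof is closed by the classical fact that, for a finite-dimensional $k$-algebra $\bar\Lambda$, the Nakayama dual $\Hom_k(\bar\Lambda,k)$ is left projective if and only if it is right projective, both conditions being equivalent to $\bar\Lambda$ being quasi-Frobenius, a left--right symmetric property of Artin algebras.

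The main obstacle is carrying out the reduction from $\Lambda$ to $\bar\Lambda$ cleanly. Descent ($M$ projective implies $\bar M$ projective) is immediate by tensoring with $k$, but lifting a splitting $\Hom_k(\bar\Lambda,k)\oplus\bar K\cong\bar\Lambda^n$ back to a splitting over $\Lambda$ requires the semiperfect structure of $\hat\Lambda$, together with the crucial use of $R$-freeness of both $M$ and the kernel of a minimal presentation of $M$, to ensure that the resulting $R$-split short exact sequence lifts to a $\Lambda$-split decomposition.
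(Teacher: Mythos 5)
Your argument is correct, but it follows a genuinely different route from the one the paper relies on. The paper simply cites Auslander's Lemma (5.1) from \emph{Isolated singularities and existence of almost split sequences}, and that proof (mirrored in the paper's own proof of Lemma 2.6 for $\omega$-duals) is a counting argument: write $\Lambda\simeq P_1^{k_1}\oplus\cdots\oplus P_n^{k_n}$ with the $P_i$ the indecomposable projectives; if $\Hom_R(\Lambda,R)$ is $\Lambda^{\op}$-projective, the modules $\Hom_R(P_i,R)$ are $n$ pairwise non-isomorphic indecomposable projective $\Lambda^{\op}$-modules, hence (by semiperfectness and Krull--Schmidt) a \emph{complete} list of them, so $\Lambda^{\op}\simeq\oplus_i\Hom_R(P_i,R)^{m_i}$ and dualizing back exhibits $\Hom_R(\Lambda^{\op},R)\simeq\oplus_i P_i^{m_i}$ as $\Lambda$-projective. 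You instead recognize both conditions as one-sided projectivity of the single bimodule $M=\Hom_R(\Lambda,R)$, reduce modulo $\m$ using $R$-freeness of $\Lambda$ (hence of $M$) to identify $\bar M$ with the Nakayama dual $\Hom_k(\bar\Lambda,k)$, and then invoke the left--right symmetry of quasi-Frobenius for finite-dimensional algebras. Each step checks out: the bimodule identifications are right, projectivity descends along $\Lambda\to\hat\Lambda$ for finitely generated modules, and the lifting direction ($\bar M$ projective $\Rightarrow$ $M$ projective) is exactly the standard projective-cover-plus-Nakayama argument, for which the essential hypothesis is $R$-flatness of $M$ (so that the minimal presentation stays exact after $-\otimes_R k$); your remark about $R$-freeness of the kernel is harmless but not the operative point. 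The trade-off: Auslander's proof is self-contained modulo semiperfectness and works uniformly for duals into $R$ or $\omega$ (which is why the paper can recycle it for Lemma 2.6), whereas yours outsources the real content to the classical Eilenberg--Nakayama symmetry of self-injectivity but makes transparent \emph{why} the statement is left--right symmetric, namely that it only depends on the closed fibre $\bar\Lambda$ being quasi-Frobenius.
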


\begin{proof}
 This is~\cite[Lemma (5.1)]{auslander1986isolated}.
\end{proof}

Of special importance to us is the case when $R$ is a \textit{complete noetherian commutative local ring} and $\Lambda$ is any noetherian $R$-algebra. Under these assumptions, we have 

\begin{lemma}\label{L:KRS}
 The Krull-Remak-Schmidt theorem holds for finitely generated $\Lambda$-modules.
\end{lemma}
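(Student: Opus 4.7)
The plan is to reduce the statement to the fact that every finitely generated $\Lambda$-module has a semiperfect endomorphism ring, from which the Krull-Remak-Schmidt property follows by a standard categorical result.

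First, I would establish that for any $M\in\md\Lambda$, the endomorphism ring $E:=\End_{\Lambda}(M)$ is module-finite over $R$. Since $\Lambda$ is finitely generated as an $R$-module and $M$ is finitely generated as a $\Lambda$-module, $M$ is finitely generated as an $R$-module. Hence $\End_R(M)$ is finitely generated over the noetherian ring $R$, and $E$, being an $R$-submodule of $\End_R(M)$, is also finitely generated over $R$. In particular $E$ is itself a noetherian $R$-algebra, module-finite over $R$.

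Next, I would show that $E$ is semiperfect. Since $E$ is a finite $R$-module, Nakayama's lemma ensures $\m E\subseteq \rad E$, so $E/\rad E$ is a quotient of the finite-dimensional $k$-algebra $E/\m E$ (where $k=R/\m$); the latter is artinian, hence semisimple after quotienting by its radical, so $E/\rad E$ is semisimple and $E$ is semilocal. Completeness of $R$ together with finiteness of $E$ over $R$ makes $E$ complete in the $\m$-adic topology, and the inclusion $\m E\subseteq \rad E$ combined with finite generation of $E/\m E$ (so that some power of $\rad E$ lies in $\m E$) shows that the $\m$-adic and $\rad E$-adic topologies coincide. Therefore $E$ is complete in the $\rad E$-adic topology, idempotents lift modulo $\rad E$, and $E$ is semiperfect.

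Finally, I would invoke the standard fact (see for example Anderson-Fuller or Swan) that if $\End_\Lambda(M)$ is semiperfect then $M$ admits a decomposition into a finite direct sum of indecomposable summands, each with local endomorphism ring, and that any two such decompositions are isomorphic up to permutation. Specializing to our setting yields the Krull-Remak-Schmidt theorem for $\md\Lambda$.

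The main obstacle is the semiperfectness step: one must verify that the $\m$-adic and $\rad E$-adic topologies on $E$ agree (so that completeness of $E$ as an $R$-module transfers to completeness with respect to $\rad E$, enabling idempotent lifting), and then cite the precise form of the decomposition theorem for modules with semiperfect endomorphism ring. Everything else is bookkeeping.
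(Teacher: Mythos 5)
Your proposal is correct and is essentially the standard argument; the paper itself does not give a proof but simply cites \cite[(6.12)]{CR-1}, whose proof proceeds along exactly these lines (module-finiteness of $\End_\Lambda(M)$ over the complete local ring $R$, the inclusion $\m E\subseteq\rad E$, idempotent lifting via completeness, and then the Krull--Remak--Schmidt--Azumaya theorem for modules whose endomorphism rings are semiperfect). All the steps you flag as needing verification (coincidence of the $\m$-adic and $\rad E$-adic topologies, the decomposition theorem for modules with semiperfect endomorphism ring) are standard and go through as you describe.
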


\begin{proof}
 See, for example, \cite[(6.12)]{CR-1}
\end{proof}

Furthermore, under the same assumptions, we have

\begin{lemma}\label{L:semiperfect}
 A noetherian $R$-algebra is semiperfect.
\end{lemma}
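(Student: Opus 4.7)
The plan is to verify the standard criterion for semiperfectness, namely that $\Lambda/J(\Lambda)$ is semisimple and that idempotents lift modulo $J(\Lambda)$. The key input is that since $R$ is a complete local ring with maximal ideal $\mathfrak{m}$ and $\Lambda$ is finitely generated as an $R$-module, $\Lambda$ is itself $\mathfrak{m}$-adically complete as an $R$-module.

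First I would show that $\mathfrak{m}\Lambda \subseteq J(\Lambda)$. This is essentially an application of a geometric series: for $x \in \mathfrak{m}\Lambda$, the element $1+x$ has inverse $\sum_{n\geq 0} (-x)^n$, which converges because $\Lambda$ is $\mathfrak{m}$-adically complete, and the same works for any right- or left-multiple $a(1+x)b$; so $1 + \mathfrak{m}\Lambda$ consists of units, and $\mathfrak{m}\Lambda$ lies in the Jacobson radical. Consequently the finite-dimensional $R/\mathfrak{m}$-algebra $\bar{\Lambda}:=\Lambda/\mathfrak{m}\Lambda$ is an artinian ring, hence semiperfect, with nilpotent radical $\bar{J}=J(\bar{\Lambda})$. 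Pulling $\bar{J}$ back to $\Lambda$ gives an ideal contained in $J(\Lambda)$ with semisimple quotient $\bar{\Lambda}/\bar{J}$, so in fact $J(\Lambda)$ is this preimage and $\Lambda/J(\Lambda)$ is semisimple.

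Next I would lift a complete set of primitive orthogonal idempotents from $\bar{\Lambda}$ to $\Lambda$. For a single idempotent $\bar{e}\in\bar{\Lambda}$, I would construct compatible idempotent lifts $e_n\in\Lambda/\mathfrak{m}^n\Lambda$ inductively: given $e_n$, the image of any lift to $\Lambda/\mathfrak{m}^{n+1}\Lambda$ differs from an idempotent by something in the nilpotent ideal $\mathfrak{m}^n\Lambda/\mathfrak{m}^{n+1}\Lambda$, and a standard Newton-type correction yields an idempotent refinement $e_{n+1}$ that reduces to $e_n$. By $\mathfrak{m}$-adic completeness of $\Lambda$ these assemble into a genuine idempotent $e\in\Lambda$ lifting $\bar{e}$. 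The same argument applied sequentially to a full decomposition $1=\bar{e}_1+\cdots+\bar{e}_r$ of primitive orthogonal idempotents produces a corresponding orthogonal decomposition in $\Lambda$; primitivity is preserved because if any $e_i$ split nontrivially in $\Lambda$, so would $\bar{e}_i$ in $\bar{\Lambda}$, contradicting the choice.

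The main obstacle is the idempotent-lifting step: one must be careful that the inductive refinements are consistent across all levels and that the limit produced by completeness is genuinely idempotent and orthogonal to the others. This is a classical argument (essentially Hensel's lemma in a noncommutative setting), and in fact a reference such as~\cite[(6.7) and (6.14)]{CR-1} handles both the lifting of idempotents over complete $R$-algebras and the implication that $\Lambda$ is then semiperfect, so the actual write-up can be short.
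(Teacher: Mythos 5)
Your argument is correct and is exactly the standard one: the paper's ``proof'' is just a citation to Curtis--Reiner (p.~132), which establishes semiperfectness of algebras finite over a complete local ring by the same route you sketch ($\mathfrak{m}\Lambda\subseteq J(\Lambda)$ via geometric series, semisimplicity of $\Lambda/J(\Lambda)$ from the artinian quotient $\Lambda/\mathfrak{m}\Lambda$, and idempotent lifting by successive approximation using completeness). Nothing further is needed.
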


\begin{proof}
 See, for example, \cite[p. 132]{CR-1}
\end{proof}

For the rest of this paper, we impose further conditions on $R$ by assuming that $(R,\m)$ is a $d$-dimensional noetherian commutative complete Cohen-Macaulay local ring with a canonical module $\omega$. Following \cite{auslander1978functors} and \cite{auslander1985survey}, we introduce
  
\begin{definition}\label{lattice-order}
A (left) $\Lambda$-module $M$ is a lattice if:

\begin{itemize}
 \item[a)] $M$ is finitely generated;
 
 \item[b)] $M$, viewed as an $R$-module, is maximal Cohen-Macaulay;
 
 \item[c)] $M_{\p}$ is $\Lambda_{\p}$-projective for each non-maximal prime ideal $\p$ of $R$;
 
 \item[d)] $\Hom_R(M,\omega)_{\p}$  is $\Lambda_{\p}^{\op}$-projective for each non-maximal prime ideal $\p$ of $R$. 
 
 \end{itemize}
 \end{definition}

This definition is similar to the one in \cite[Section 7]{auslander1978functors}. Indeed, we have modified the requirement that $R$ be an equidimensional Gorenstein ring by specializing to local rings. On the other hand, we have relaxed the Gorenstein condition to include Cohen-Macaulay rings with canonical modules. Accordingly, the canonical module $\omega$ replaces $R$ in our setting.

\begin{definition}\label{D:order}
 The $R$-algebra $\Lambda$ is called an $R$-order if $\Lambda$ is a lattice (in particular, $\Lambda$ is finitely generated as an $R$-module).
\end{definition}

\begin{lemma}\label{L:opposite}
If $\Lambda$ is an $R$-order, then so is $\Lambda^{\op}$.
\end{lemma}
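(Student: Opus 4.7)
The proof proceeds by verifying each of the four defining conditions of Definition~\ref{lattice-order} for $\Lambda^{\op}$ regarded as a left $\Lambda^{\op}$-module. Conditions (a) (finite generation) and (b) (being MCM as an $R$-module) depend only on the underlying $R$-module structure, which is identical for $\Lambda$ and $\Lambda^{\op}$; and condition (c) is vacuous since a ring is always projective over itself. The substance of the lemma is condition (d): for each non-maximal prime $\p$, $\Hom_R(\Lambda^{\op},\omega)_{\p}$ must be projective as a left $((\Lambda^{\op})^{\op})_{\p} = \Lambda_{\p}$-module. As $R$-modules, $\Hom_R(\Lambda^{\op},\omega) = \Hom_R(\Lambda,\omega)$, and this object carries a natural $\Lambda$-bimodule structure whose two actions correspond respectively to condition (d) for $\Lambda$ (right $\Lambda_{\p}$-projectivity, given by hypothesis) and condition (d) for $\Lambda^{\op}$ (left $\Lambda_{\p}$-projectivity, to be proved). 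The task reduces to transferring projectivity from one side of this bimodule to the other.

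My plan is to reduce to an artinian-local situation via a maximal regular sequence, where Matlis duality converts the question into one about self-injectivity and the left-right symmetry of quasi-Frobenius rings settles the matter. Fix a non-maximal prime $\p$ and pass to the completion $\hat{R}_{\p}$; faithfully flat base change preserves projectivity and the bimodule structures, so we may work over a complete Cohen-Macaulay local ring of dimension strictly less than $d$ with canonical module $\hat{\omega}_{\p}$. Since both $\hat{\Lambda}_{\p}$ and its $\omega$-dual $N := \Hom_{\hat{R}_{\p}}(\hat{\Lambda}_{\p},\hat{\omega}_{\p})$ are MCM $\hat{R}_{\p}$-modules, any maximal regular sequence $\underline{x}$ in the maximal ideal of $\hat{R}_{\p}$ is simultaneously regular on $\hat{\omega}_{\p}$, $\hat{\Lambda}_{\p}$, and $N$. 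Setting $\bar{R} := \hat{R}_{\p}/(\underline{x})$, $\bar{\omega} := \hat{\omega}_{\p}/\underline{x}\hat{\omega}_{\p}$, and $\bar{\Lambda} := \hat{\Lambda}_{\p}/\underline{x}\hat{\Lambda}_{\p}$, the ring $\bar{R}$ is artinian local, $\bar{\omega}$ is an injective hull of its residue field, and base change identifies $N/\underline{x}N$ with $\bar{N} := \Hom_{\bar{R}}(\bar{\Lambda},\bar{\omega})$ as $\bar{\Lambda}$-bimodules. A Nakayama-type argument using projective covers over the semiperfect ring $\hat{\Lambda}_{\p}$ (Lemma~\ref{L:semiperfect}) then shows that for the MCM $\hat{\Lambda}_{\p}$-bimodule $N$, left (respectively, right) $\hat{\Lambda}_{\p}$-projectivity is equivalent to left (respectively, right) $\bar{\Lambda}$-projectivity of $\bar{N}$.

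Matlis duality $(-)^{\vee} := \Hom_{\bar{R}}(-,\bar{\omega})$ is an exact involution on $\md \bar{R}$ that interchanges projectivity and injectivity on either side, under which $\bar{N} = \bar{\Lambda}^{\vee}$ is right $\bar{\Lambda}$-projective iff $\bar{\Lambda}$ is left self-injective, and $\bar{N}$ is left $\bar{\Lambda}$-projective iff $\bar{\Lambda}$ is right self-injective. Since $\bar{\Lambda}$ is a two-sided artinian ring, the left-right symmetry of the quasi-Frobenius property identifies these two conditions, and so the right-projectivity of $\bar{N}$ supplied by hypothesis yields the left-projectivity required. The main obstacle is the Nakayama reduction step: one must verify carefully that left and right $\hat{\Lambda}_{\p}$-projectivity of the MCM bimodule $N$ can each be detected after collapsing to the artinian quotient, using both the MCM hypothesis on $\hat{\Lambda}_{\p}$ and $N$ and the existence of projective covers in the semiperfect setting to lift $\bar{\Lambda}$-projectivity back to $\hat{\Lambda}_{\p}$-projectivity.
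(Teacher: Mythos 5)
Your proposal is correct, but it follows a genuinely different route from the paper. The paper's proof (modelled on Auslander's Lemma 5.1 in the isolated singularities paper) stays at the level of the localization: it decomposes $\Lambda_{\p}$ into indecomposable projectives $P_{i}$, observes that by hypothesis the duals $\Hom_{R_{\p}}(P_{i},\omega_{\p})$ are pairwise non-isomorphic indecomposable projective $\Lambda_{\p}^{\op}$-modules, and uses semiperfectness to conclude that these $n$ duals must be a \emph{complete} set of indecomposable projectives over $\Lambda_{\p}^{\op}$; hence $\Lambda_{\p}^{\op}$ is a direct sum of them, and dualizing back into $\omega_{\p}$ exhibits $\Hom_{R}(\Lambda^{\op},\omega)_{\p}$ as a direct sum of the $P_{i}$. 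The left--right symmetry there is carried entirely by the fact that a semiperfect ring and its opposite have the same number of indecomposable projectives. You instead complete at $\p$, cut down by a maximal regular sequence, identify the reduction of the $\omega$-dual with the Matlis dual $D(\bar{\Lambda})$ of the artin algebra $\bar{\Lambda}$, translate both projectivity conditions into one-sided self-injectivity of $\bar{\Lambda}$, and invoke the classical left--right symmetry of the quasi-Frobenius property; a projective-cover Nakayama argument then lifts projectivity back up. Your route is longer and leans on more machinery (behaviour of canonical modules under regular sequences, faithfully flat descent of projectivity, the QF theorem), but it isolates the symmetry in a classical theorem and, by completing first, it sidesteps the question -- which the paper's proof glosses over -- of whether the non-complete localization $\Lambda_{\p}$ is actually semiperfect. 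The paper's argument, in exchange, is shorter and requires only the Krull--Schmidt bookkeeping of indecomposable projectives.
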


\begin{proof}
The argument below is similar to that in the proof of Lemma~\ref{L:op}.
Assume that $\Lambda_{\p} \simeq P_1^{k_{1}}\oplus\cdots\oplus P_n^{k_{n}}$, where the $P_i$ form a complete set of
non-isomorphic indecomposable projective $\Lambda_{\p}$-modules. By assumption, $\Hom_R(_{\Lambda}{\Lambda},\omega)_{\p}$ is
$\Lambda_{\p}^{\op}$-projective, and so $\Hom_{R_{\p}}(P_i,\omega_{\p})$, $1\leq i\leq n$ are pairwise non-isomorphic indecomposable projective 
$\Lambda_{\p}^{\op}$-modules. Since $\Lambda$ is semiperfect, a complete set of non-isomorphic indecomposable projective $\Lambda_{\p}^{\op}$-module has precisely $n$ elements. Therefore, $\Hom_{R_{\p}}(P_i,\omega_{\p})$, $1\leq i\leq n$ is such a complete set and 
\[
\Lambda_{\p}^{\op} \simeq \oplus_{i=1}^n \Hom_{R_{\p}}(P_i, \omega_{\p})^{m_{i}}
\]
for some integeres $m_{i}$. By dualizing with 
$\Hom_{R_{\p}}(-,\omega)_{\p}$, we have that $\Hom_R(\Lambda^{\op},\omega)_{\p} \simeq \oplus_{i=1}^n P_i^{m_{i}}$ is ${\Lambda}_{\p}$-projective. \end{proof}

The following result is obvious.

\begin{lemma}
 The functor $(-)^*=\Hom_R(-,\omega)$ is a duality  between 
$\Lambda$-$\textbf{lat}$ and $\Lambda^{\op}$-$\textbf{lat}$. \qed
\end{lemma}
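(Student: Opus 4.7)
The plan is to check the four defining axioms of a lattice for $M^* := \Hom_R(M, \omega)$ whenever $M$ is a $\Lambda$-lattice, and then appeal to biduality. The module structure is the usual one: a left $\Lambda$-action on $M$ induces a left $\Lambda^{\op}$-action on $M^*$ by $(\lambda \cdot f)(m) := f(\lambda m)$.

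For axiom (a), finite generation of $M^*$ over $R$ (and hence over $\Lambda^{\op}$) is immediate from the noetherianity of $R$ and finite generation of $M$ and $\omega$. Axiom (b), that $M^*$ is maximal Cohen-Macaulay, is the classical fact that $\Hom_R(-,\omega)$ preserves the MCM property over a Cohen-Macaulay local ring with canonical module $\omega$. For axiom (c), I need $(M^*)_{\p}$ to be $\Lambda_{\p}^{\op}$-projective for every non-maximal prime $\p$; since $M$ is finitely presented, localization commutes with $\Hom_R(-,\omega)$, giving $(M^*)_{\p} \cong \Hom_{R_{\p}}(M_{\p}, \omega_{\p})$, and this is exactly the content of axiom (d) for $M$ as a $\Lambda$-lattice. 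For axiom (d) for $M^*$, I need $\Hom_R(M^*, \omega)_{\p}$ to be $\Lambda_{\p}$-projective, which after localizing reduces to showing $M^{**} \cong M$ as $\Lambda$-modules, combined with axiom (c) for $M$.

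The biduality $M \to M^{**}$ is the standard evaluation map $m \mapsto (f \mapsto f(m))$, which is $\Lambda$-linear by the definition of the induced $\Lambda^{\op}$-structure on $M^*$. It is an isomorphism for every MCM module over a Cohen-Macaulay local ring with canonical module (this is the familiar duality statement that underlies much of MCM module theory). Applying the same argument with $\Lambda$ and $\Lambda^{\op}$ interchanged (noting that Lemma~\ref{L:opposite} guarantees $\Lambda^{\op}$ is itself an $R$-order), one sees that $(-)^*$ sends $\Lambda^{\op}$-$\textbf{lat}$ into $\Lambda$-$\textbf{lat}$ as well, and that the two compositions are naturally isomorphic to the respective identity functors.

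There is no substantial obstacle here; the result is essentially bookkeeping around three classical facts about the canonical module (preservation of MCM, compatibility of $\Hom_R(-,\omega)$ with localization at finitely presented arguments, and biduality on MCM modules). The only point requiring a small amount of care is verifying that the biduality isomorphism and the localization isomorphism are compatible with the $\Lambda$-actions, which follows directly from the definitions of the induced actions on duals.
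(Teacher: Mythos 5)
Your proof is correct and fills in exactly the bookkeeping the paper omits: the paper declares this lemma obvious and gives no argument, and your verification of the four lattice axioms for $M^*$ (with axioms (c) and (d) for $M^*$ reducing to (d) and (c) for $M$ via localization and biduality) is the standard reasoning the authors implicitly rely on.
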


\smallskip

\section{Lattice approximations}

Let $\Lambda$ be an $R$-order as above and $\Lambda$-$\textbf{lat}$ the category of all lattices over $\Lambda$. The full subcategory of 
$\Lambda$-$\textbf{lat}$ determined by indecomposable lattices will be denoted by $\ind(\Lambda$-$\textbf{lat})$. In this section we establish the existence of a minimal $\textbf{lat}$-approximation for $\Lambda$-modules of finite length and their syzygy modules.

%
%
%

First, recall some basic definitions. A homomorphism $f:M \lra N$ is said to be \texttt{right minimal} if every homomorphism $g:M \lra M$ satisfying $fg = f$ is an isomorphism.

\begin{definition}
Let $\X$ be a subcategory of $\md\Lambda$.

\begin{enumerate}
 \item $Y \in \mathcal{X}$ is said to be $\mathcal{X}$-\texttt{injective}, if $\Ext^{1}_{\Lambda}(M,Y) = 0$ for any $M \in \mathcal{X}$.
 \smallskip
 
 \item Let $M$ be a $\Lambda$-module. An exact sequence of $\Lambda$-modules
\[
0 \lra Y_{M} \lra X_{M} \overset{f}\lra M \lra 0
\]
is an $\mathcal{X}$-approximation of $M$, if $X_{M} \in \mathcal{X}$ and $Y_{M}$ has a finite resolution
\[
0 \longrightarrow L_{t} \longrightarrow \ldots \longrightarrow L_{0}
\longrightarrow Y_{M} \longrightarrow 0
\]
by $\mathcal{X}$-injectives. The above approximation is said to be \texttt{minimal} if $f$ is right minimal.
\end{enumerate}

\end{definition}

\begin{remark}
If $Z \in \X$, then any $g : Z \lra M$ lifts over the $\X$-approximation 
$X_M\st{f}\lra M$.
\end{remark}

Given a $\Lambda$-module $M$, the full subcategory of $\md\Lambda$ consisting of all modules isomorphic to direct summands of finite sums of copies of $M$ will be denoted by $\add\,M$.

\begin{lemma}\label{L:CM-inj}
The category of $\textbf{lat}$-injective modules coincides with $\add\, \Hom_{R}(\Lambda_{\Lambda}, \omega)$.
\end{lemma}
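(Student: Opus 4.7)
The plan is to prove the two inclusions separately, exploiting the duality $(-)^{\ast}=\Hom_R(-,\omega)$ between $\Lambda$-$\textbf{lat}$ and $\Lambda^{\op}$-$\textbf{lat}$ recorded just above.

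\medskip

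First I would verify that $\Hom_R(\Lambda_{\Lambda},\omega)$ is itself an object of $\Lambda$-$\textbf{lat}$ and is $\textbf{lat}$-injective. For the lattice property, Lemma~\ref{L:opposite} gives that $\Lambda^{\op}$ is an $R$-order, so ${}_{\Lambda^{\op}}\Lambda$ lies in $\Lambda^{\op}$-$\textbf{lat}$ and the duality sends it to $\Hom_R(\Lambda_{\Lambda},\omega)\in\Lambda$-$\textbf{lat}$. For the injectivity, note that $\Lambda$ is maximal Cohen-Macaulay over the local Cohen-Macaulay ring $R$, hence $R$-free by Auslander-Buchsbaum. Consequently every $\Lambda$-projective resolution $P_{\bullet}\to M$ is also an $R$-projective resolution, and the Hom-tensor adjunction
\[
\Hom_{\Lambda}\bigl(-,\Hom_R(\Lambda_{\Lambda},\omega)\bigr)\;\cong\;\Hom_R(-,\omega)
\]
yields $\Ext^{i}_{\Lambda}(M,\Hom_R(\Lambda_{\Lambda},\omega))\cong\Ext^{i}_R(M,\omega)$ for every $\Lambda$-module $M$. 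When $M$ is a lattice, the right-hand side vanishes for $i\geq 1$ because $M$ is maximal Cohen-Macaulay and $\omega$ is a canonical module. Since the $\textbf{lat}$-injectivity property descends to direct summands and propagates along finite sums, the inclusion $\add\,\Hom_R(\Lambda_{\Lambda},\omega)\subseteq\{\textbf{lat}\text{-injectives}\}$ follows.

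\medskip

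For the reverse inclusion, let $Y$ be a $\textbf{lat}$-injective lattice. Dualizing, $Y^{\ast}\in\Lambda^{\op}$-$\textbf{lat}$; I would choose a surjection $\Lambda^{n}\twoheadrightarrow Y^{\ast}$ of $\Lambda^{\op}$-modules with kernel $K$. The depth lemma applied to $0\to K\to\Lambda^{n}\to Y^{\ast}\to 0$ forces $K$ to be maximal Cohen-Macaulay over $R$, and localizing at any non-maximal prime $\p$ splits the sequence (since $Y^{\ast}_{\p}$ is $\Lambda^{\op}_{\p}$-projective), so conditions (c) and (d) of Definition~\ref{lattice-order} are inherited by $K$. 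Thus $K\in\Lambda^{\op}$-$\textbf{lat}$. Applying $(-)^{\ast}$, together with $\Ext^{1}_R(Y^{\ast},\omega)=0$ to secure right-exactness, produces a short exact sequence
\[
0\lra Y\lra \Hom_R(\Lambda_{\Lambda},\omega)^{n}\lra K^{\ast}\lra 0
\]
in $\Lambda$-$\textbf{lat}$. By $\textbf{lat}$-injectivity of $Y$, $\Ext^{1}_{\Lambda}(K^{\ast},Y)=0$, so the sequence splits and $Y\in\add\,\Hom_R(\Lambda_{\Lambda},\omega)$.

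\medskip

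The main obstacle is the Ext-computation in the first half: it hinges on the crucial observation that an $R$-order is automatically $R$-free, which is what lets the Hom-adjunction transport derived functors from $\Lambda$ down to $R$, where the canonical-module machinery finishes the job. Once this is in place, the reverse inclusion is little more than standard syzygy-and-splitting bookkeeping made possible by the Cohen-Macaulay duality.
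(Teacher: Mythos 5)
Your overall strategy coincides with the paper's: the Hom--tensor adjunction for the first inclusion, and, for the second, dualizing a syzygy sequence of $Y^{\ast}$ over $\Lambda^{\op}$ and splitting off $Y$. The second half of your argument is correct as written and is exactly the paper's.

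The problem is in the first half, precisely at the point you single out as the crux: an $R$-order is \emph{not} automatically $R$-free. Condition (b) of Definition~\ref{lattice-order} only makes $\Lambda$ maximal Cohen--Macaulay over $R$, and Auslander--Buchsbaum yields freeness only under the additional hypothesis of finite projective dimension, which MCM modules over a singular Cohen--Macaulay local ring typically fail to have; freeness of $\Lambda$ over $R$ is assumed only in the preamble to Lemma~\ref{L:op}, not in the definition of an order. Consequently a $\Lambda$-projective resolution $P_{\bullet}\to M$ need not be an $R$-projective resolution, and the asserted isomorphism $\Ext^{i}_{\Lambda}(M,\Hom_R(\Lambda_{\Lambda},\omega))\cong\Ext^{i}_{R}(M,\omega)$ for \emph{every} $\Lambda$-module $M$ is not available. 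The step can be repaired without changing the structure of the argument: the adjunction still identifies the complex $\Hom_{\Lambda}(P_{\bullet},\Hom_R(\Lambda_{\Lambda},\omega))$ with $\Hom_R(P_{\bullet},\omega)$, and when $M$ is a lattice all the $P_{i}$ and all the $\Lambda$-syzygies of $M$ are MCM over $R$; since $\Ext^{>0}_{R}(-,\omega)$ vanishes on MCM modules, applying $\Hom_R(-,\omega)$ to each short exact sequence $0\to\Omega^{j+1}M\to P_{j}\to\Omega^{j}M\to 0$ preserves exactness, so $\Hom_R(P_{\bullet},\omega)$ is exact in positive degrees and $\Ext^{i>0}_{\Lambda}(M,\Hom_R(\Lambda_{\Lambda},\omega))=0$. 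With that correction your proof agrees with the paper's.
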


\begin{proof}
 First we show that $\Hom_{R}(\Lambda_{\Lambda}, \omega)$ is $\textbf{lat}$-injective. Picking an arbitrary $M\in\Lambda$-$\textbf{lat}$ and using the 
Hom - tensor adjunction, we have an isomorphism
 \[
 \Ext^{i>0}_{\Lambda}(M, \Hom_{R}(\Lambda_{\Lambda}, \omega)) \simeq \Ext_R^{i>0}(M, \omega) = 0.
 \]
 Conversely, assume that $Y$ is a $\textbf{lat}$-injective. Dualizing into $\omega$ a syzygy sequence
\[
0 \longrightarrow Z \longrightarrow \Lambda^{n}_{\Lambda} \longrightarrow \Hom_R (Y, \omega) \longrightarrow 0,
\]
all of whose terms are $\Lambda^{op}$-lattices, we have an exact sequence
\[
0 \longrightarrow Y \longrightarrow \Hom_{R}(\Lambda^{n}_{\Lambda}, \omega)   \longrightarrow  \Hom_R(Z, \omega) \longrightarrow 0.
\]
Since $\Hom_R(Z, \omega)$ is in $\Lambda$-$\textbf{lat}$ and $Y$ is
$\textbf{lat}$-injective, this sequence splits, proving the claim.
\end{proof}

\begin{definition}
 We say that $\Lambda$-$\textbf{lat}$ has enough $\textbf{lat}$-injectives if any lattice can be embedded in a $\textbf{lat}$-injective with cokernel in 
 $\Lambda$-$\textbf{lat}$.
\end{definition}

As a consequence of the proof of Lemma~\ref{L:CM-inj}, we have
\begin{cor}\label{C:enough}
 For any $R$-order $\Lambda$, the category $\Lambda$-$\textbf{lat}$ has enough $\textbf{lat}$-injectives. \qed
\end{cor}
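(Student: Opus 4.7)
The plan is to adapt the syzygy-dualize construction from the proof of Lemma~\ref{L:CM-inj}, but starting with an arbitrary lattice $M$ rather than a $\textbf{lat}$-injective $Y$. Indeed, the corollary is essentially asking: run the middle of that proof backwards. Given $M \in \Lambda$-$\textbf{lat}$, I would first pass to the $\omega$-dual $M^{*}=\Hom_{R}(M,\omega)$, which by the duality lemma just preceding the corollary is a $\Lambda^{\op}$-lattice.

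Next, I would pick a surjection $\pi : \Lambda_{\Lambda}^{n}\twoheadrightarrow M^{*}$ in $\Lambda^{\op}$-$\md$, where $\Lambda_{\Lambda}^{n}$ is viewed as a right $\Lambda$-module (such a $\pi$ exists since $M^{*}$ is finitely generated), and put $Z:=\Ker\pi$. The key technical step is to check that the syzygy $Z$ is itself a $\Lambda^{\op}$-lattice. Finite generation is automatic; maximal Cohen-Macaulayness over $R$ follows from the depth lemma applied to $0\to Z\to \Lambda_{\Lambda}^{n}\to M^{*}\to 0$, since the outer terms are MCM of dimension $d$. For condition (c) in Definition~\ref{lattice-order}, at a non-maximal prime $\p$ the surjection $\pi_{\p}$ splits (because $M^{*}_{\p}$ is $\Lambda_{\p}^{\op}$-projective), so $Z_{\p}$ is a summand of $(\Lambda_{\p}^{\op})^{n}$ and thus projective; condition (d) follows from condition (c) by dualizing the split sequence and using that $\Hom_{R}(\Lambda^{\op},\omega)_{\p}$ is $\Lambda_{\p}$-projective (this is the order condition combined with Lemma~\ref{L:op}).

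Now I would dualize the exact sequence $0\to Z\to \Lambda_{\Lambda}^{n}\to M^{*}\to 0$ into $\omega$. Since $M^{*}$ is MCM, $\Ext_{R}^{\geq 1}(M^{*},\omega)=0$, so the dualized sequence
\[
0\longrightarrow M^{**}\longrightarrow \Hom_{R}(\Lambda_{\Lambda}^{n},\omega)\longrightarrow \Hom_{R}(Z,\omega)\longrightarrow 0
\]
is exact in $\Lambda$-$\md$. By biduality for MCM modules with canonical $\omega$, $M^{**}\simeq M$. The middle term lies in $\add\,\Hom_{R}(\Lambda_{\Lambda},\omega)$, so by Lemma~\ref{L:CM-inj} it is $\textbf{lat}$-injective, and the duality lemma transfers $Z\in\Lambda^{\op}$-$\textbf{lat}$ to $\Hom_{R}(Z,\omega)\in\Lambda$-$\textbf{lat}$. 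This is exactly the required embedding of $M$ into a $\textbf{lat}$-injective with lattice cokernel.

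The only place where any real work happens is verifying that $Z$ is a lattice; everything else is formal once one has the biduality and Ext-vanishing for MCM modules and the characterization of $\textbf{lat}$-injectives already established in Lemma~\ref{L:CM-inj}. The note in the proof of Lemma~\ref{L:CM-inj} that ``all of whose terms are $\Lambda^{\op}$-lattices'' is precisely this observation applied to the special case when the starting module is itself a $\textbf{lat}$-injective, so the corollary is essentially packaged in that proof.
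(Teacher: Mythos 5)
Your proposal is correct and is essentially the paper's own argument: the paper derives the corollary directly from the proof of Lemma~\ref{L:CM-inj} by applying the same dualize--take-syzygy--dualize-back construction to an arbitrary lattice $M$ in place of the $\textbf{lat}$-injective $Y$. You simply spell out the verification (left implicit in the paper's phrase ``all of whose terms are $\Lambda^{\op}$-lattices'') that the syzygy $Z$ is itself a $\Lambda^{\op}$-lattice.
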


 Of special interest to us are approximations by $\Lambda$-$\textbf{lat}$. 
\begin{prop}\label{P:approximation}
Any $\Lambda$-module of finite length and any of its syzygy modules have  minimal $\textbf{lat}$-approximations.
\end{prop}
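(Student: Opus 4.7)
The plan is to build an explicit $\textbf{lat}$-approximation of a finite-length $\Lambda$-module $M$ by dualizing through $\omega$ on the $\Lambda^{\op}$-side, to handle the syzygies by an Auslander--Buchweitz-type induction, and to extract minimality at the end from Krull--Schmidt.

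For the finite-length case, set $N := \Ext^d_R(M,\omega)$, which is a finite-length $\Lambda^{\op}$-module, and record the reciprocity $\Ext^d_R(N,\omega) \cong M$ furnished by local duality for the canonical module $\omega$ applied to finite-length modules. Choose a $\Lambda^{\op}$-projective resolution $\cdots \to Q_1 \to Q_0 \to N \to 0$ and set $L := \Omega^d_{\Lambda^{\op}} N$. Iterating the depth lemma on the successive syzygy sequences $0 \to \Omega^{j+1} N \to Q_j \to \Omega^j N \to 0$, starting from $\dep N = 0$ and using $\dep Q_i = d$, forces $\dep L = d$. Since $N_{\p} = 0$ for every non-maximal prime $\p$, the localized truncated resolution $0 \to L_{\p} \to (Q_{d-1})_{\p} \to \cdots \to (Q_0)_{\p} \to 0$ is a bounded exact sequence of $\Lambda_{\p}^{\op}$-projectives, and splicing from the right shows that $L_{\p}$ is itself $\Lambda_{\p}^{\op}$-projective; the parallel argument after $\omega$-dualization supplies condition (d) of Definition~\ref{lattice-order}. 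Hence $L$ is a $\Lambda^{\op}$-lattice and $L^{*} := \Hom_R(L,\omega)$ is a $\Lambda$-lattice by the duality between $\Lambda$-$\textbf{lat}$ and $\Lambda^{\op}$-$\textbf{lat}$.

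Applying $(-)^{*} = \Hom_R(-,\omega)$ to $0 \to L \to Q_{d-1} \to \cdots \to Q_0 \to N \to 0$ and splicing the resulting short exact sequences (using MCM-ness of every intermediate term, $\Ext$-vanishing for finite-length $N$, and the identification $\Ext^d_R(N,\omega) \cong M$) collapses everything to
\[
0 \to Q_0^{*} \to Q_1^{*} \to \cdots \to Q_{d-1}^{*} \to L^{*} \to M \to 0,
\]
each $Q_i^{*}$ lying in $\add \Hom_R(\Lambda,\omega)$ and hence being a $\textbf{lat}$-injective by Lemma~\ref{L:CM-inj}. Setting $K := \ker(L^{*} \to M)$, the initial segment exhibits a finite $\textbf{lat}$-injective resolution of $K$, and so $0 \to K \to L^{*} \to M \to 0$ is the desired approximation. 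Syzygies $\Omega^i M$ with $i \geq d$ are themselves lattices by the same depth/localization arguments applied to syzygy sequences, so admit trivial approximations; for $0 < i < d$, the exact sequence $0 \to \Omega^d M \to P_{d-1} \to \cdots \to P_i \to \Omega^i M \to 0$ is a finite resolution of $\Omega^i M$ by lattices, and an Auslander--Buchweitz-type induction on its length delivers an approximation of $\Omega^i M$, using Ext-vanishing in degrees $\geq 1$ of lattices against modules with a finite $\textbf{lat}$-injective resolution, which follows by dimension shifting from Lemma~\ref{L:CM-inj}.

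Minimality is the final step. Given any $\textbf{lat}$-approximation $f : X \to M$ with kernel $Y$ admitting a finite $\textbf{lat}$-injective resolution, Krull--Schmidt (Lemma~\ref{L:KRS}) together with semiperfectness of $\End_\Lambda(X)$ (Lemma~\ref{L:semiperfect}) allows splitting $X = X_1 \oplus X_2$ with $X_2$ the maximal direct summand of $X$ contained in $\ker f$; then $f|_{X_1} : X_1 \to M$ is right minimal, and the complementary kernel $Y / X_2$ retains a finite $\textbf{lat}$-injective resolution because $\add \Hom_R(\Lambda,\omega)$, and hence the class of modules with a finite resolution by it, is closed under direct summands. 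The main technical obstacle I anticipate is precisely this minimality bookkeeping — tracking the kernel through the Krull--Schmidt decomposition while preserving the finite $\textbf{lat}$-injective resolution; by contrast, the existence half is essentially forced once local duality for $\omega$ and the depth lemma are granted.
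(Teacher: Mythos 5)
Your construction of the approximation itself is the paper's construction: both proofs take the Cohen--Macaulay dual $\Ext^{d}_{R}(M,\omega)$, resolve it by projective $\Lambda^{\op}$-modules, and dualize back into $\omega$. Your lattice $L^{*}=\Hom_{R}(\Omega^{d}_{\Lambda^{\op}}N,\omega)$ is precisely the module $\Ker\,\partial_{d+1}^{\ast}$ appearing there, and your exact sequence $0\to Q_{0}^{*}\to\cdots\to Q_{d-1}^{*}\to L^{*}\to M\to 0$ is the paper's complex with homology concentrated in degree $d$. You differ in the two auxiliary steps. For syzygies the paper goes downward, applying the horseshoe lemma to an approximation of $C$ together with one step of the $\textbf{lat}$-injective resolution of its kernel, so the middle term is visibly a lattice; you go upward from $\Omega^{d}M$, which you rightly note is already a lattice, and invoke an Auslander--Buchweitz induction on the finite lattice resolution of $\Omega^{i}M$. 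That route is valid because lattices are closed under extensions and there are enough $\textbf{lat}$-injectives (Corollary~\ref{C:enough}), but the induction step requires the simultaneous construction of hulls $0\to K\to Y^{K}\to X^{K}\to 0$ and a pushout, which your one-line appeal suppresses; the paper's horseshoe diagram is the more economical device. For minimality the paper proves Lemma~\ref{minimal} by hand, via Fitting's lemma on the reductions modulo $\m^{i}$ and an inverse limit of idempotents, whereas you cite semiperfectness of $\End_{\Lambda}(X)$ and split off the maximal summand of $X$ killed by $f$; this is the standard projective-cover argument over a semiperfect ring and is legitimate, and you are in fact more careful than the paper in checking that the kernel of the minimal approximation retains a finite $\textbf{lat}$-injective resolution. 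One small inaccuracy: the exactness of the dualized complex does not follow from ``MCM-ness of every intermediate term,'' since the syzygies $\Omega^{j}N$ for $0<j<d$ need not be maximal Cohen--Macaulay; what the splicing actually uses is the dimension shift $\Ext^{i}_{R}(\Omega^{j}N,\omega)\simeq\Ext^{i+j}_{R}(N,\omega)$ along the mCM terms $Q_{j}$, together with the vanishing of $\Ext^{i}_{R}(N,\omega)$ for $i\neq d$.
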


\begin{proof}
 The proof repeats the Auslander-Buchweitz construction of $\mcm$-approximation and goes as follows. If $C$ is a $\Lambda$-module of finite length, then $C$ is of finite length over $R$ and hence a Cohen-Macaulay $R$-module of codepth $d$. Let $C^{\vee} := \Ext^{d}_{R}(C, \omega)$ be the Cohen-Macaulay dual of $C$; this is a $\Lambda_{\Lambda}$-module. Choosing a   $\Lambda_{\Lambda}$-projective resolution
 \[
 \ldots \longrightarrow P_{1} \overset{\partial_{1}}{\longrightarrow} P_{0} \longrightarrow  C^{\vee} \longrightarrow 0,
 \]
and dualizing it into $\omega$, we have a complex of $\Lambda$-modules;
\[
0 \longrightarrow P_{0}^{\ast} \overset{\partial_{1}^{\ast}}\longrightarrow P_{1}^{\ast} \longrightarrow \ldots \longrightarrow
P^{\ast}_{d-1} \overset{\partial_{d}^{\ast}}\longrightarrow
P^{\ast}_{d} \overset{\partial_{d+1}^{\ast}}\longrightarrow
P^{\ast}_{d+1}\longrightarrow \ldots
\]
whose only homology, $C^{\vee \vee}$, is concentrated in degree $d$. Thus we have a short exact sequence
 \[
 0 \longrightarrow \im\, \partial_{d}^{\ast} \longrightarrow  \Ker\, \partial_{d+1}^{\ast} \longrightarrow C^{\vee \vee} \longrightarrow 0
 \]
 of $\Lambda$-modules. By the duality for Cohen-Macaualy modules, the  ${P_{i}^{\ast}}$ are in $\mcm$, implying that $\Ker\, \partial_{d+1}^{\ast}$ is in $\mcm$ as well. That this module localizes to projectives at non-maximal prime ideals follows from the fact that $C$ is of finite length and, therefore, the localization of its projective resolution at non-maximal prime ideals yields a split exact complex of projectives. As~$C$ is a Cohen-Macaulay $R$-module, we have a canonical isomorphism $C \to C^{\vee\vee}$ of $R$-modules, which is also a 
 $\Lambda$-homomorphism.  By Lemma~\ref{L:CM-inj}, each $P_{i}^{\ast}$ is 
 $\textbf{lat}$-injective. Therefore, $\im\, \partial_{d}^{\ast}$ has a finite resolution by $\textbf{lat}$-injectives, and as a result we have a $\textbf{lat}$-approximation of~$C$.

To construct a $\textbf{lat}$-approximation for the first syzygy module of $C$, start with a $\textbf{lat}$-approximation $0 \lra Y \lra X \lra C \lra 0$ and a syzygy sequence $0 \lra \Omega C \lra P \lra C \lra 0$ of $C$, together with a short exact sequence $0 \lra Y_0 \lra L_{0} \lra Y \lra 0$, where $L_{0}$ is the first term in the finite resolution of $Y$ by $\textbf{lat}$-injectives. By the horseshoe lemma, we have a commutative diagram with exact rows and columns:

\[
\xymatrix
	{
	& 0 \ar[d]
	& 0 \ar[d]
	& 0 \ar[d]
	&
\\
	0 \ar[r]
	& Y_0 \ar[r] \ar[d]
	& X' \ar[r] \ar[d]
	& \Omega C \ar[r] \ar[d]
	& 0
\\
	0 \ar[r]
	& L_{0} \ar[r] \ar[d]
	& P \oplus L_{0} \ar[r] \ar[d]
	& P \ar[r] \ar[d]
	& 0
\\
	0 \ar[r]
	& Y \ar[r] \ar[d]
	& X \ar[r] \ar[d]
	& C \ar[r] \ar[d]
	& 0
\\
	& 0
	& 0
	& 0
	&
	}
\]
Notice that $X'$, being the kernel of a lattice homomorphism, is a lattice itself, which yields a $\textbf{lat}$-approximation of $\Omega C$.

It remains to show the existence of minimal $\textbf{lat}$-approximations. This is done in the next lemma, which should be known at least to the experts.
\end{proof}

\begin{lemma}\label{minimal}
 If a $\Lambda$-module has a $\textbf{lat}$-approximation, then it has a minimal one.
\end{lemma}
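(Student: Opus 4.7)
The plan is to take an arbitrary $\textbf{lat}$-approximation $f\colon X \to M$ and trim $X$ to a suitable direct summand to obtain a right minimal one. Two features of $X$ drive the argument: since $X$ is a lattice it is finitely generated over $R$, so $E := \End_\Lambda(X)$ is a noetherian $R$-algebra and is therefore semiperfect by Lemma~\ref{L:semiperfect}; and the indecomposable decomposition of $X$ is finite and essentially unique by Lemma~\ref{L:KRS}.

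I would first invoke the classical fact that, for any morphism $f\colon X \to M$ whose source has semiperfect endomorphism ring, there is a decomposition $X = X_1 \oplus X_2$ with $f|_{X_2} = 0$ and $f|_{X_1}\colon X_1 \to M$ right minimal. One proves this by selecting $X_1$ to be minimal, in the direct-summand preorder, among summands of $X$ through which $f$ factors; a hypothetical non-automorphism $g \in \End_\Lambda(X_1)$ with $f|_{X_1}\circ g = f|_{X_1}$ is then analyzed via its block-matrix representation on an indecomposable decomposition of $X_1$, using the locality of $\End_\Lambda(X_{1,i})$ for each indecomposable summand $X_{1,i}$, to produce a strictly smaller summand through which $f$ factors, contradicting minimality.

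Applying this decomposition to our given $\textbf{lat}$-approximation, it remains to verify that $f|_{X_1}\colon X_1 \to M$ is itself a $\textbf{lat}$-approximation. The source $X_1$ is a lattice as a direct summand of $X$. Any $\psi\colon Z \to M$ from a lattice $Z$ lifts through $f$ to some $\tilde\psi\colon Z \to X_1 \oplus X_2$, and projecting to $X_1$ gives a lift through $f|_{X_1}$ since $f|_{X_2} = 0$. For the kernel condition, the inclusion $X_2 \subseteq \ker f$ forces $\ker f = \ker(f|_{X_1}) \oplus X_2$, so $\ker(f|_{X_1})$ is a direct summand of $\ker f$; direct summands of modules of finite $\textbf{lat}$-injective dimension inherit finite $\textbf{lat}$-injective dimension (via the $\Ext$-vanishing characterization together with enough $\textbf{lat}$-injectives, Corollary~\ref{C:enough}), so $\ker(f|_{X_1})$ admits a finite resolution by $\textbf{lat}$-injectives.

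The main obstacle is the cited decomposition for morphisms with semiperfect endomorphism ring. Its proof is classical but requires careful manipulation of the block-matrix form of a hypothetical non-automorphism, the locality of endomorphism rings of indecomposable summands, and lifting idempotents modulo the radical of $E$. By comparison, the verification that the restricted morphism remains a $\textbf{lat}$-approximation is routine.
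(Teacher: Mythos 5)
Your argument is correct but reaches right minimality by a genuinely different route than the paper. You invoke the standard decomposition $X = X_1 \oplus X_2$ with $f|_{X_1}$ right minimal and $f|_{X_2} = 0$, available because $\End_\Lambda(X)$ is semiperfect (Lemma~\ref{L:semiperfect}) and finitely generated $\Lambda$-modules form a Krull--Schmidt category (Lemma~\ref{L:KRS}); this is the textbook argument and produces the right minimal version in one step. The paper instead builds the superfluous summand by hand: starting from a non-invertible $g$ with $fg=f$, it applies the Fitting lemma to each finite-length reduction $X_i = X/\m^i X$, obtains compatible idempotent projections onto the parts where $g_i$ is nilpotent (and where $f_i$ vanishes), and passes to the inverse limit, using completeness of $R$ to realize the limit idempotent as a genuine direct summand of $X$ contained in $\ker f$; strictly speaking that construction removes one bad summand at a time and must be iterated, which terminates by Krull--Remak--Schmidt. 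Your route outsources the hard work to a classical lemma on semiperfect endomorphism rings; the paper's is self-contained modulo the Fitting lemma and completeness. One caution: your claim that $\ker(f|_{X_1})$ still has a finite resolution by $\textbf{lat}$-injectives leans on an ``$\Ext$-vanishing characterization'' of such modules that is not established in the paper and whose converse direction is not obvious for arbitrary modules. The clean argument is the Auslander--Buchweitz one: $X_2 \subseteq \ker f$ is a lattice, and dimension shifting along the finite $\textbf{lat}$-injective resolution of $\ker f$ gives $\Ext^{i>0}_\Lambda(N,\ker f)=0$ for every lattice $N$, hence the same for the summand $X_2$, so $X_2$ is itself $\textbf{lat}$-injective; an induction on the length of the resolution of $\ker f = \ker(f|_{X_1}) \oplus X_2$ then yields a finite $\textbf{lat}$-injective resolution of $\ker(f|_{X_1})$. (The paper's own proof silently skips this verification as well, so your attention to it is a point in your favor even if the justification offered is thin.)
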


\begin{proof}
 Let $f : X \lra M$ be a $\textbf{lat}$-approximation. If $f$ is right minimal, we are done. Thus assume that $f = f g$ for some non-invertible endomorphism of $X$. Let the lower index $i$ indicate, for both objects and maps, reduction modulo $\m^{i}$. If  $g_{i}$ is surjective, then by Nakayama's lemma, the same would be true for $g$, and since $X$ is noetherian, $g$ would be an isomorphism. Thus no $g_{i}$ is an isomorphism. As $X_{i}$ is a module of finite length, by applying the Fitting lemma, we have a direct sum decomposition $X_{i} \simeq X_{i}' \oplus X_{i}''$ into $g_{i}$-stable submodules, where the restriction of  $g_{i}$ to $X_{i}'$ is nilpotent and the restriction of  $g_{i}$ to $X_{i}''$ is an isomorphism. $X_{i}'$ is uniquely determined as the largest submodule on which $g_{i}$ is nilpotent. By the foregoing argument, it is nonzero. Since $f = fg = fg^{2} = \ldots$, we have that $f_{i}$ vanishes on $X_{i}'$.

 For each $i$, let $\pi_{i} : X_{i} \to X_{i-1}$ denote reduction modulo $\m^{i-1}$. This is a surjective map. Moreover, $\pi_{i} (X_{i}') \subset X_{i-1}'$ and  $\pi_{i} (X_{i}'') \cap  X_{i-1}' = \{0\}$. It follows that  $\pi_{i} : X_{i}' \to X_{i-1}'$ is surjective, too. Let $p_{i} : X_{i} \to X_{i}$ be the projection onto $X_{i}'$. Then  $p_{i}^{2} = p_{i}$ and we set $p : = \underleftarrow{\lim}\, p_{i}$. This is an idempotent endomorphism of the completion of $X$, which is isomorphic to $X$, because $X$ is finitely generated over $R$ and $R$ is complete. Moreover, since the restriction of each $\pi_{i}$ to $X_{i}'$ is surjective, one concludes that $p$ is a non-trivial idempotent and therefore the image $X'$ of $p$ is a nonzero direct summand of $X$. Clearly, $f$ vanishes on that direct summand. In other words, the approximation has a common nonzero direct summand with its kernel. Removing this common direct summand, we have a right minimal approximation, as needed.
\end{proof}

As a consequence of the proof of Proposition~\ref{P:approximation}, we have 

\begin{cor}\label{C:fpd}
 Under the assumption of Proposition~\ref{P:approximation}, suppose that $R$ is Gorenstein and $\Lambda = R$. Then the minimal lattice approximation of a module of finite length is also its minimal mCM approximation. In particular, the kernel of the minimal approximation is of finite projective dimension. \qed
\end{cor}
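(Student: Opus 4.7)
The plan is to match the lat-approximation constructed in Proposition~\ref{P:approximation} with the Auslander-Buchweitz mCM approximation, using the specialization $R$ Gorenstein and $\Lambda=R$.

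First I would observe that in this setting the canonical module is $\omega=R$, so by Lemma~\ref{L:CM-inj} the class of $\textbf{lat}$-injectives equals $\add\,\Hom_R(R,R)=\add\,R$, i.e.\ the finitely generated free $R$-modules. Consequently, for any module $C$ of finite length, a $\textbf{lat}$-approximation $0\to Y\to X\to C\to 0$ has $Y$ admitting a finite resolution by free modules, so $\pd_R Y<\infty$; meanwhile $X$ is a lattice and hence, in particular, maximal Cohen-Macaulay. This already shows the approximation is an mCM approximation in the Auslander-Buchweitz sense and gives the ``in particular'' statement.

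Next I would verify the converse inclusion: any mCM approximation of a finite-length module is automatically a lattice approximation. Given $0\to Y\to X\to C\to 0$ with $X\in\mcm$ and $\pd_R Y<\infty$, I localize at a non-maximal prime $\p$: since $C$ has finite length, $C_\p=0$, whence $X_\p\cong Y_\p$. Because $X$ is mCM, $X_\p$ is mCM over $R_\p$; because $Y$ has finite projective dimension, so does $X_\p$. Auslander-Buchsbaum then forces $\pd_{R_\p}X_\p=0$, so $X_\p$ is free, and since $\omega=R$ the dual condition in Definition~\ref{lattice-order} is automatic. Hence $X$ is a lattice.

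Putting these two inclusions together, the class of lattice approximations of $C$ coincides with the class of mCM approximations of $C$. Since both notions of minimality refer to the same right-minimality condition on the surjection $X\to C$, the minimal representatives agree. The main (minor) obstacle is simply ensuring that the hypothesis ``finite length'' is actually used in the right place, namely to conclude $C_\p=0$ at non-maximal primes so that Auslander-Buchsbaum can be invoked at each localization; everything else is a direct translation through Lemma~\ref{L:CM-inj}.
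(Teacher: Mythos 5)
Your argument is correct, and its key observation is the same one the paper uses: since $\omega=R$ in the Gorenstein case, Lemma~\ref{L:CM-inj} identifies the $\textbf{lat}$-injectives with the finitely generated free modules, so the data of a $\textbf{lat}$-approximation of a finite-length module (lattice on top, kernel finitely resolved by $\textbf{lat}$-injectives) becomes exactly the data of an mCM approximation. The paper disposes of the corollary in one line by remarking that the construction in Proposition~\ref{P:approximation} literally coincides with the Auslander--Buchweitz construction, the kernel having finite injective dimension and hence, by Gorensteinness, finite projective dimension. You instead prove the stronger statement that the two \emph{classes} of approximations of $C$ coincide, supplying a converse inclusion (mCM approximation $\Rightarrow$ lattice approximation) via localization at non-maximal primes, $C_\p=0$, and Auslander--Buchsbaum; this is not in the paper and is not strictly necessary --- once the minimal lattice approximation is known to be a right-minimal mCM approximation, uniqueness of minimal mCM approximations already forces the conclusion --- but it makes the transfer of minimality between the two notions completely explicit and costs only a short, correct argument. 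Both routes are sound; yours is more self-contained, the paper's is shorter because it leans on the construction already carried out in Proposition~\ref{P:approximation}.
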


\begin{proof}
 The proof of Proposition~\ref{P:approximation} shows that the construction of a lattice approximation for a module of finite length is the same as that of an mCM approximation. Its kernel is a module of finite injective dimension, but since $R$ is Gorenstein, it is also of finite projective dimension.
\end{proof}

\begin{remark}
By Lemma~\ref{L:KRS}, we have a Krull-Remak-Schmidt theorem for lattices over $R$-orders, i.e., each lattice is a direct sum of indecomposable lattices
and this decomposition is unique up to isomorphism. Also, by Lemma~\ref{L:semiperfect}, $\Lambda$ is semiperfect, i.e., each finitely generated $\Lambda$-module has a projective cover.
%
%
\end{remark}

Suppose that $\{S_1, \cdots, S_n\}$ is a complete set of non-isomorphic simple $\Lambda$-modules, and let $S := \oplus_{i=1}^nS_i$. By Proposition~\ref{P:approximation} and Lemma \ref{minimal}, there exists a minimal $\textbf{lat}$-approximation $\alpha:G\lra S$. Now we introduce the notion of $\underline{\h}$-length, an invariant to measure the size of lattices.

\begin{definition}
Let $M$ be a (not necessarily finitely generated) $\Lambda$-module. We set 
\[
\underline{\h}(M) := \underline{\Hom}_{\Lambda}(M,M\oplus G)
\]
and define the $\underline{\h}$-\texttt{length} of $M$ as $l_R(\underline{\h}(M))$, where $G$ is a minimal $\textbf{lat}$-approximation of $S$ and $l_R$ denotes the length over $R$. 
\end{definition}

Recall that, for any $\Lambda$-modules $M$ and $N$,
\[
\underline{\Hom}_{\Lambda}(M, N)=\Hom_{\Lambda}(M, N)/{P(M, N)},
\]
where $P(M, N)$ is the $R$-submodule of $\Hom_{\Lambda}(M, N)$ consisting of all homomorphisms factoring through projective $\Lambda$-modules. It follows from the definitions that $l_R(\underline{\h}(M))$ is finite for any lattice~$M$.

\begin{definition}
 Let $\mathcal{L}$ be the subcategory of $\Lambda$-$\textbf{lat}$. We say that 
 $\mathcal{L}$ is of \texttt{bounded $\underline{\h}$-length}, if there is an integer $b>0$ such that $|\, \underline{\h}(\mathcal{L}) |= \sup\, \{ l_R(\underline{\h}(M))\, |\, M\in\mathcal{L}\} < b$.
\end{definition}

\section{The Harada-Sai lemma and the stable length}

In this section we study relationships between the $\underline{\h}$-length and the betti numbers of lattices. This leads to a variant of the Harada-Sai lemma for lattices based on the $\underline{\h}$-length.
\smallskip

Recall that a homomorphism $f:M\lra N$ of $\Lambda$-lattices is said to be  \texttt{irreducible} if it is neither a section nor a retraction, and in any factorization
\[
\xymatrix{ M \ar[dr]_g \ar[rr]^{f} && N \\ & X \ar[ur]_h & }
\]
with $X$ in $\Lambda$-$\textbf{lat}$, either $g$ is a section or $h$ is a retraction.

\begin{definition}
Let $M$ and $N$ be indecomposable $\Lambda$-lattices and $\xx=x_1, x_2, \cdots, x_n$ an $R$-regular sequence.
We say that $N$ \texttt{is connected} to~$M$ modulo $\xx$, if $N$ is isomorphic to $M$ or if there is a chain
\[
N=L_0\st{f_1}\lra L_1\lra\cdots L_{n-1}\st{f_n}\lra L_n=M
\]
of irreducible maps between indecomposable lattices $L_{i}$ such that $f_n\cdots f_1 \otimes_{{\Lambda}}\Lambda/\xx\Lambda \neq 0$. We say that $N$ is connected to $M$ if there is an $R$-regular sequence such that $N$ is connected to $M$ modulo that sequence.
\end{definition}

The relation `is connected to' is clearly reflexive, but not necessarily symmetric. It also fails to be transitive, and the classical Harada-Sai lemma for finite modules over finite-dimensional algebras provides a rough quantitative measure of this failure. Our next goal is to establish a similar result for lattices over orders.




\medskip

We begin by recalling the notion of a \texttt{faithful system of parameters} 
(faithful s.o.p.) of~$R$ for a subcategory of $\Lambda$-$\textbf{lat}$ \cite[Definition 15.7]{leuschke2012cohen}.\footnote{Such a system is also called an \texttt{efficient system of parameters} of $R$ in \cite[p. 48]{yoshino1990cohen}.}


\begin{definition}
Let $\mathcal{L}$ be a subcategory of $\Lambda$-$\textbf{lat}$. A system of parameters $\xx = x_1, \ldots, x_d$ of~$R$ is said to be  
$\mathcal{L}$-\texttt{faithful} (or faithful s.o.p. for $\mathcal{L}$) if
$\xx\Ext_{\Lambda}^1(M, N) = 0$ for any $M$ in $\mathcal{L}$ and any finitely generated $\Lambda$-module $N$.
\end{definition}

\begin{lemma}\label{L:stable-vanishing}
 Let $M$ be a finitely generated $\Lambda$-module and $\xx = (x_1, \ldots, x_n)$ an ideal of $R$. The following conditions are equivalent:
 \begin{enumerate}
 \item\label{1} $\xx\Ext_{\Lambda}^1(M, -) = 0$ when restricted to finitely generated $\Lambda$-modules;
 \smallskip
 \item\label{2} $\xx\underline{\Hom}_{\Lambda}(M, -) = 0$ when restricted to finitely generated $\Lambda$-modules;
  \smallskip
 \item\label{3} $\xx\underline{\Hom}_{\Lambda}(M, M) = 0$;
  \smallskip
 \item\label{4} For any $x \in \xx$,
 multiplication by $x$ on $M$ factors through a $\Lambda$-projective.
\end{enumerate}
If one of these conditions holds, then the vanishing in \eqref{1} and  \eqref{2} occurs on all $\Lambda$-modules, not just on finitely generated ones.
\end{lemma}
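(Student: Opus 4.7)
The plan is to establish the equivalences via $(3)\Leftrightarrow(4)$, $(2)\Leftrightarrow(3)$, $(4)\Rightarrow(1)$, and $(1)\Rightarrow(3)$. The final clause about the vanishing extending to all $\Lambda$-modules will come for free, since the proofs I envision for $(4)\Rightarrow(1)$ and for $(3)\Rightarrow(2)$ never use that $N$ is finitely generated.

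The equivalence $(3)\Leftrightarrow(4)$ is essentially a restatement of the definition of $P(M,M)$: for $x\in\xx$, the condition $x\,\underline{\Hom}_{\Lambda}(M,M)=0$ contains in particular the vanishing of the stable class of $\mu_x^M:=x\cdot\id_M$, which is exactly the condition that multiplication by $x$ on $M$ factors through a projective; conversely every $\overline{f}\in\underline{\Hom}_{\Lambda}(M,M)$ satisfies $x\overline{f}=\overline{f}\cdot\overline{\mu_x^M}$, so the vanishing of $\overline{\mu_x^M}$ kills the whole module. For $(2)\Leftrightarrow(3)$, the direction $(2)\Rightarrow(3)$ is the specialization $N=M$, while $(3)\Rightarrow(2)$ follows by factoring $\mu_x^M=\pi\sigma$ through a projective $Q$ and writing $xf=f\mu_x^M=(f\pi)\sigma$ for any $f\colon M\to N$. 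For $(4)\Rightarrow(1)$, functoriality of $\Ext^1_{\Lambda}(-,N)$ applied to $\mu_x^M$ yields multiplication by $x$ on $\Ext^1_{\Lambda}(M,N)$; if $\mu_x^M$ factors through a projective $Q$, this multiplication factors through $\Ext^1_{\Lambda}(Q,N)=0$. Note that these two arguments impose no finiteness hypothesis on $N$.

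The main step is $(1)\Rightarrow(3)$. Since $\Lambda$ is semiperfect by Lemma~\ref{L:semiperfect}, $M$ admits a projective cover $p\colon P\twoheadrightarrow M$; let $0\to\Omega M\to P\xrightarrow{p}M\to 0$ be the corresponding syzygy sequence, with $\Omega M$ finitely generated since $\Lambda$ is noetherian. Applying $\Hom_{\Lambda}(M,-)$ yields the exact sequence
\[
\Hom_{\Lambda}(M,P)\xrightarrow{p_*}\Hom_{\Lambda}(M,M)\xrightarrow{\delta}\Ext^1_{\Lambda}(M,\Omega M).
\]
The one delicate point, which I expect to be the main obstacle, is the identification $\im(p_*)=P(M,M)$. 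The inclusion $\im(p_*)\subseteq P(M,M)$ is immediate. Conversely, given $f=\pi\sigma$ with $\sigma\colon M\to Q$, $\pi\colon Q\to M$ and $Q$ projective, the projectivity of $Q$ together with the surjectivity of $p$ lifts $\pi$ to some $\pi'\colon Q\to P$ with $p\pi'=\pi$; hence $f=p(\pi'\sigma)\in\im(p_*)$. Thus $\delta$ descends to an injection $\underline{\Hom}_{\Lambda}(M,M)\hookrightarrow\Ext^1_{\Lambda}(M,\Omega M)$. Applying $(1)$ to the finitely generated module $\Omega M$ yields $\xx\,\Ext^1_{\Lambda}(M,\Omega M)=0$, and hence $\xx\,\underline{\Hom}_{\Lambda}(M,M)=0$, which is $(3)$.
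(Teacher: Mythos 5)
Your proposal is correct and uses essentially the same ingredients as the paper: the paper's key step is $(1)\Rightarrow(2)$ via the embedding of $\underline{\Hom}_{\Lambda}(M,X)$ into $\Ext^1_{\Lambda}(M,\Omega X)$ coming from a syzygy sequence of $X$, which is exactly your $(1)\Rightarrow(3)$ argument specialized to $X=M$ (including the identification of $\im(p_*)$ with $P(M,M)$), and the remaining implications in both treatments reduce to the observation that $x f = f\circ(x\,1_M)$. The reorganization of the cycle of implications is cosmetic, and your justification of the final clause via $(4)$ matches the paper's.
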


\begin{proof}
Since $x$ is a central element of $\Lambda$, the expressions in the first three conditions are well-defined. Also, multiplication by $x$ in~\eqref{4} is a homomorphism of $\Lambda$-modules.

$\eqref{1} \Rightarrow \eqref{2}$. For a finitely generated $\Lambda$-module $X$, choose a short exact sequence $0 \to \Omega X \to P \to X \to 0$ with $P$ a finitely generated projective. Then $\Omega X$ is finitely generated, too. Applying $\Hom(M, -)$ and passing to the corresponding long homology exact sequence, we have that ${\underline{\Hom}_{\Lambda}(M, X)}$ is an $R$-submodule of $\Ext_{\Lambda}^1(M, \Omega X)$. Since the latter is annihilated by $\xx$, the same holds for the former.

$\eqref{2} \Rightarrow \eqref{3}$. Trivial.

$\eqref{3} \Rightarrow \eqref{4}$. Multiplication by $x$ on $M$ can be written as $x1_{M}$. The desired result now follows.

$\eqref{4} \Rightarrow \eqref{1}$. Immediate.
\newline The last assertion of the lemma follows immediately from \eqref{4}.
\end{proof}

The following proposition is now obvious.

\begin{prop}\label{prop2}
Let $\mathcal{L}$ be a subcategory of $\Lambda$-$\textbf{lat}$ and $\xx = x_1, \ldots, x_d$ a system of parameters of $R$. Then $\xx$ is $\mathcal{L}$-faithful if and only if $\xx\underline{\h}(M)=0$ for any $M$ in $\mathcal{L}$. \qed
\end{prop}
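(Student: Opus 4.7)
The plan is to deduce the proposition directly from the chain of equivalences in Lemma~\ref{L:stable-vanishing}, which already packages the equivalence between $\xx$-annihilation of $\Ext^1_\Lambda(M,-)$ and of $\underline{\Hom}_\Lambda(M,M)$. The main work is just matching definitions, and the only small observation needed is that $\underline{\Hom}_\Lambda(M,M)$ sits inside $\underline{\h}(M)$ as a direct summand via the splitting $M \oplus G \twoheadrightarrow M \hookrightarrow M \oplus G$, so that annihilation of $\underline{\h}(M)$ passes to annihilation of $\underline{\Hom}_\Lambda(M,M)$.

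For the forward direction, I would fix $M \in \mathcal{L}$ and assume $\xx$ is $\mathcal{L}$-faithful. By definition this is condition~\eqref{1} of Lemma~\ref{L:stable-vanishing}, hence condition~\eqref{2} holds for $M$. Since $M \oplus G$ is finitely generated (both $M$ and $G$ are lattices, so $\Lambda$-finitely generated), applying~\eqref{2} to $N = M \oplus G$ yields $\xx\, \underline{\Hom}_\Lambda(M, M \oplus G) = 0$, which is exactly $\xx\,\underline{\h}(M) = 0$.

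For the converse, assume $\xx\,\underline{\h}(M) = 0$ for every $M \in \mathcal{L}$. Using the direct sum decomposition $\underline{\Hom}_\Lambda(M, M \oplus G) \simeq \underline{\Hom}_\Lambda(M, M) \oplus \underline{\Hom}_\Lambda(M, G)$, the hypothesis forces $\xx\,\underline{\Hom}_\Lambda(M,M) = 0$, which is condition~\eqref{3} of Lemma~\ref{L:stable-vanishing}. That lemma then returns condition~\eqref{1}, namely $\xx\,\Ext^1_\Lambda(M, N) = 0$ for every finitely generated $\Lambda$-module $N$, so $\xx$ is $\mathcal{L}$-faithful.

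There is no real obstacle here; the hard lifting was already carried out in Lemma~\ref{L:stable-vanishing}, and this proposition is essentially its reformulation tailored to the invariant $\underline{\h}(M)$. If anything needs care, it is only verifying that $M \oplus G$ is finitely generated so that the lemma's restriction to finitely generated modules applies, and recording the summand argument above.
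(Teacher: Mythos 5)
Your argument is correct and is exactly the route the paper intends: the proposition is stated as an immediate consequence of Lemma~\ref{L:stable-vanishing}, with the forward direction being \eqref{1}$\Rightarrow$\eqref{2} applied to the finitely generated module $M\oplus G$, and the converse following from the summand $\underline{\Hom}_\Lambda(M,M)$ of $\underline{\h}(M)$ together with \eqref{3}$\Rightarrow$\eqref{1}. Nothing is missing.
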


\begin{lemma}\label{L:bound-sop}
Let $b>0$ be an integer. Then there is a system of parameters $\xx$ of $R$ 
which is faithful for any subcategory $\mathcal{L}$ of $\Lambda$-$\textbf{lat}$
with $|\underline{\h}(\mathcal{L})|\leq b$.
\end{lemma}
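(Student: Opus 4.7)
The plan is to combine Proposition~\ref{prop2}, which reduces the statement to finding a system of parameters $\xx = x_1, \ldots, x_d$ of $R$ such that $\xx\,\underline{\h}(M) = 0$ for every $M \in \mathcal{L}$, with a standard observation about annihilators of modules of small length.

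First I would note the elementary fact that any $R$-module of length at most $b$ is annihilated by $\m^b$: picking a composition series of length $\leq b$, each factor is isomorphic to $R/\m$, so iterated multiplication by $b$ elements of $\m$ kills the module. Applied to $\underline{\h}(M)$ for $M \in \mathcal{L}$, this yields $\m^b \underline{\h}(M) = 0$. So it suffices to produce a system of parameters of $R$ whose entries all lie in $\m^b$.

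To get such a system of parameters, I would start with any system of parameters $y_1, \ldots, y_d$ of $R$ and set $x_i := y_i^b$. Since the ideal $(x_1, \ldots, x_d)$ has the same radical as $(y_1, \ldots, y_d)$, the tuple $\xx = x_1, \ldots, x_d$ is again a system of parameters, and manifestly each $x_i \in \m^b$. For any $M \in \mathcal{L}$ we then get $x_i \underline{\h}(M) \subseteq \m^b \underline{\h}(M) = 0$, hence $\xx\, \underline{\h}(M) = 0$. Proposition~\ref{prop2} then delivers the desired $\mathcal{L}$-faithfulness. The argument reduces to a single short step once Proposition~\ref{prop2} is invoked; I do not foresee any substantive obstacle, the only minor trick being the passage from an arbitrary system of parameters to one whose entries lie in a prescribed power of $\m$ via taking $b$-th powers.
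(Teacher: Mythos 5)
Your argument is correct and is essentially the paper's own proof: the paper likewise observes that $l_R(\underline{\h}(M))\leq b$ forces $\m^{b}\underline{\h}(M)=0$, chooses a system of parameters inside $\m^{b}$, and concludes via Proposition~\ref{prop2}. Your only addition is to spell out the standard construction of such a system of parameters by taking $b$-th powers, which the paper leaves implicit.
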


\begin{proof}
By assumption, $\m^{b} \underline{\h}(M) = 0$ for any $M \in \mathcal{L}$. Now choose an s.o.p. $\xx$ inside $\m^{b}$ and use
Proposition \ref{prop2}. 
\end{proof}







\begin{definition}
 If $M$ is an object in $\Lambda$-$\textbf{lat}$, the number of indecomposable direct summands (with multiplicities) which appear in the projective cover of~$M$ will be denoted by $\beta (M)$.

\end{definition}

For any lattice $M$, we have the numerical invariants $\beta(M)$, $e(M)$ (multiplicity of $M$ as an $R$-module), and $\beta_R(M)$ (the betti number of $M$ as an $R$-module). Let $\mathcal{L}$ be a subcategory of $\Lambda$-$\textbf{lat}$ and $\gamma$ one of the functions $\beta, e, \beta_R$. We say that $\gamma (\mathcal{L})$ is \texttt{bounded} if there is an integer $b>0$ such that 
$\gamma (M)\leq b$ for any $M\in\mathcal{L}$.

\begin{lemma}\label{L:betti}
If $\mathcal{L}\subseteq\ind(\Lambda$-$\textbf{lat})$ is such that 
$|\underline{\h}(\mathcal{L})|\leq b$ for some integer $b$, then 
$\beta(\mathcal{L}) \leq b$.
\end{lemma}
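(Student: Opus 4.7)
The plan is to establish the pointwise bound $\beta(M) \le l_R(\underline{\h}(M))$ for each non-projective indecomposable lattice $M\in\mathcal{L}$; combined with the trivial case $\beta(P)=1$ for projective indecomposables $P$, this yields $\beta(\mathcal{L}) \le b$ (tacitly assuming $b \ge 1$, since otherwise $\mathcal{L}$ contains only projective indecomposables and the statement becomes degenerate). My route will be the chain
\[
\beta(M) \le l_R(\Hom_\Lambda(M, S)) = l_R(\underline{\Hom}_\Lambda(M, S)) \le l_R(\underline{\Hom}_\Lambda(M, G)) \le l_R(\underline{\h}(M)).
\]

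The two rightmost estimates are essentially formal. The first comes from the direct sum decomposition $\underline{\Hom}_\Lambda(M, M \oplus G) = \underline{\End}_\Lambda(M) \oplus \underline{\Hom}_\Lambda(M, G)$. The second comes from the lifting property of the $\textbf{lat}$-approximation $\alpha\colon G \to S$: since $\ker\alpha$ admits a finite resolution by $\textbf{lat}$-injectives, $\Ext^1_\Lambda(M, \ker\alpha) = 0$, so $\alpha_*\colon \Hom_\Lambda(M, G) \twoheadrightarrow \Hom_\Lambda(M, S)$ is surjective. Moreover $\alpha_*$ sends $P(M, G)$ onto $P(M, S)$ (given any $M \to Q \xrightarrow{\varphi} S$ in $P(M, S)$, projectivity of $Q$ lets one lift $\varphi$ through $\alpha$ to $M \to Q \to G$ in $P(M, G)$), yielding the induced surjection $\underline{\Hom}_\Lambda(M, G) \twoheadrightarrow \underline{\Hom}_\Lambda(M, S)$. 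The leftmost estimate is a direct length computation: since $S$ is semisimple, $\Hom_\Lambda(M, S) = \Hom_\Lambda(M/\rad M, S) \cong \bigoplus_i D_i^{n_i}$, where $n_i$ is the multiplicity of $S_i$ in $M/\rad M$ and $D_i = \End_\Lambda(S_i)$ is a finite-dimensional division algebra over $R/\m$; hence $l_R(\Hom_\Lambda(M, S)) = \sum_i n_i\, l_R(D_i) \ge \sum_i n_i = \beta(M)$.

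The hard part will be the middle equality $\underline{\Hom}_\Lambda(M, S) = \Hom_\Lambda(M, S)$, i.e., showing $P(M, S) = 0$ for non-projective indecomposable $M$. My plan: suppose a nonzero $h\colon M \xrightarrow{g} Q \xrightarrow{\varphi} S$ with $Q$ projective. Project onto a simple summand $S_i$ on which $h$ is nonzero and use semiperfectness of $\Lambda$ to write $Q = \bigoplus_j P_j^{k_j}$ with $P_j$ the projective cover of $S_j$. Since $\Hom_\Lambda(P_j, S_i) = 0$ for $j \neq i$, $\varphi$ factors through the $P_i^{k_i}$-summand. Writing the resulting map $P_i^{k_i} \to S_i$ component-wise as $(d_1, \ldots, d_{k_i})$ with each $d_a \in \Hom_\Lambda(P_i, S_i) \cong D_i$, the composite $\sum_a d_a g_a\colon M \to S_i$ is nonzero, so some $d_a g_a \neq 0$. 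Since $d_a$ is a nonzero element of the division ring $D_i$, the map $d_a\colon P_i \to S_i$ is a nonzero scaling of the canonical surjection and has kernel $\rad P_i$; hence $g_a(M) \not\subset \rad P_i$. Nakayama then forces $g_a\colon M \twoheadrightarrow P_i$ to be surjective, and projectivity of $P_i$ splits $g_a$, exhibiting $P_i$ as a direct summand of $M$ — contradicting that $M$ is indecomposable and non-projective. This completes the chain and the lemma.
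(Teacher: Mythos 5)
Your proof is correct and follows the same route as the paper's: reduce to non-projective indecomposables, identify $\Hom_\Lambda(M,S)$ with $\underline{\Hom}_\Lambda(M,S)$, bound its length by $b$ via the approximation $\alpha:G\to S$, and bound $\beta(M)$ by that length. Your execution of the last step --- computing $\Hom_\Lambda(M,S)\cong\oplus_i D_i^{n_i}$ from the top $M/\rad M$ --- is a cleaner version of the paper's construction of an orthogonal system of maps $\Psi_i$ whose reductions modulo $\m$ are linearly independent, and you additionally supply proofs (of $P(M,S)=0$ for indecomposable non-projective $M$, and of the surjection $\underline{\Hom}_\Lambda(M,G)\twoheadrightarrow\underline{\Hom}_\Lambda(M,S)$) of steps the paper only asserts.
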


\begin{proof}
Since there are only finitely many indecomposable projective $\Lambda$-modules, we may assume that $\mathcal{L}$ does not contain projectives. 
Returning to 
$S = \oplus_{i=1}^n S_i$, 
where the $S_{i}$ form a complete set of simple modules, we have that, for any $L\in\mathcal{L}$, the canonical map
$\Hom_{\Lambda}(L,S) \longrightarrow \underline{\Hom}_{\Lambda}(L,S)$ is an isomorphism.  As a consequence,
\[
l_R(\Hom_{\Lambda}(L,S)) = l_R(\underline{\Hom}_{\Lambda}(L,S)) \leq b.
\]


Let $f:P \to L$ be a projective cover and $P \simeq P_1\oplus\cdots\oplus P_t$, where the $P_i$ are indecomposable projective $\Lambda$-modules. Thus
$t = \beta(L)$, the betti number of $L$. Writing $f = [f_1 \cdots f_t]$, we have that $\im f_i\neq 0$ and $\Sigma_{i\neq j} \im f_j$ does not contain $\im f_i$ 
for any $1\leq i\leq t$. Let $L_i:= L / I_i$, where $I_i=\Sigma_{i\neq j} \im f_j$. Since $L_i\neq 0$, we have $\Hom_{\Lambda}(L_i,S_j)\neq 0$ for some $j$, which we denote by $i$. Pick a nonzero $g_i \in \Hom_{\Lambda}(L_i,S_i)$ and set $\psi_i := \lambda_i g_i \pi_i$, where $\pi_i : L \to L_i$  is the canonical projection and  $\lambda_i : S_i \to S$ is the canonical injection. By definition, 
$\psi_if_i\neq 0$ and $\psi_jf_i=0$ for $1\leq i,j\leq t$ and $i\neq j$. Applying 
$\Hom_{\Lambda}(-,S)$ to the projective cover $f$ of $L$, we have a monic $R$-homomorphism 
\[
\Hom_{\Lambda}(f,S):\Hom_{\Lambda}(L,S)\lra \Hom_{\Lambda}(P_1\oplus\cdots\oplus P_t,S).
\] 
Let $\Psi_{i}$ be the image $\psi_{i}f$ of $\psi_{i}$, $1 \leq i \leq t$. The above orthogonality relations show that $\Psi_{i} = [0, \ldots, \psi_{i}f_{i}, \ldots, 0]$. The image of $\Psi_{i}$ is isomorphic to $S_{i}$ because $S_{i}$ is simple and 
$\psi_{i}f_{i} \neq 0$. Moreover, that image is a direct summand of $S$. Viewing $S$ as an $R$-module and reducing modulo the maximal ideal of $R$, we have that the reductions of the $\Psi_{i}$ remain nonzero. It is now easy to see that these reductions are linearly independent over the residue field of $R$. By Nakayama's lemma (over $R$), the $\Psi_{i}$ form a generating set for the image of 
$\Hom_{\Lambda}(f,S)$, which is isomorphic to $\Hom_{\Lambda}(L,S)$. Hence
\[
\beta(L) = t \leq l_R(\Hom_{\Lambda}(L,S)) \leq b.
\]
%
\end{proof}

Next, we want to recall, without  proof, the original Harada-Sai lemma \cite[Lemma 9, p. 336]{harada1970categories}.

\begin{lemma}\label{hslemma}
Let $\Lambda$ be any ring and $\{M_i\}_{i \in \mathbb{N}}$ a family of indecomposable modules of length less than or equal to $b$. Suppose that, for all $i$, we are given non-isomorphisms $f_i:M_i\lra M_{i+1}$.  Then there is an integer $n>0$ such that $f_n \cdots f_1=0$.
\end{lemma}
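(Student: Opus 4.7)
The plan is to prove this classical lemma by induction on the length bound $b$. For the base case $b=1$, every $M_i$ is simple, and any non-isomorphism between simple modules is zero, so $n=1$ suffices.

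For the inductive step, I would first isolate the following key observation: if $h\colon X\to Y$ is a non-isomorphism between indecomposable modules of length at most $b$, then $\ell(\mathrm{im}\,h)\le b-1$. When $\ell(X)<b$ or $\ell(Y)<b$ this is immediate; when $\ell(X)=\ell(Y)=b$, injectivity of $h$ would force surjectivity by length comparison and hence $h$ would be an isomorphism, contradicting the hypothesis. Consequently each submodule $I_k:=\mathrm{im}(f_k\cdots f_1)\subseteq\mathrm{im}(f_k)\subseteq M_{k+1}$ has length at most $b-1$, and the maps $f_{k+1}$ restrict to surjections $\bar f_{k+1}\colon I_k\twoheadrightarrow I_{k+1}$. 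Decomposing each $I_k$ into indecomposable summands via Krull-Remak-Schmidt, I would then extract, by a pigeonhole argument run backwards starting from the assumed nonvanishing of $f_n\cdots f_1$, a thread of indecomposable summands $I_{1,\sigma(1)}\to I_{2,\sigma(2)}\to\cdots\to I_{n,\sigma(n)}$, each of length at most $b-1$, along which the induced composition of projections and inclusions of the $\bar f_k$ remains nonzero. After compressing consecutive arrows in this thread that happen to be isomorphisms, what remains is a genuine chain of non-isomorphisms between indecomposables of length $\le b-1$; for $n$ sufficiently large relative to the bound furnished by the inductive hypothesis, this compressed chain exceeds that bound and we reach a contradiction.

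The main obstacle will be the bookkeeping in the inductive step: the induced maps $\bar f_{k+1}\colon I_k\to I_{k+1}$ are matrices between direct-sum decompositions rather than single arrows, and both the thread-selection and the iso-compression step must be performed carefully so as to preserve simultaneously the nonvanishing of the thread composition and enough non-isomorphism arrows to trigger the inductive hypothesis. Carrying this out quantitatively is what produces the classical bound $n=2^{b}-1$ in sharper formulations of the lemma.
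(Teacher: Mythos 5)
The paper states this lemma without proof, citing Harada--Sai directly, so there is no in-paper argument to compare with; I therefore measure your proposal against the standard proof. Your key observation is correct and is indeed the base case of the classical induction: a non-isomorphism between indecomposables of length $\le b$ has image of length $\le b-1$. The thread extraction is also sound as far as it goes (a nonzero entry of a product of matrices is a sum over paths of products of entries, so some single path composes to something nonzero, and precomposing or postcomposing with isomorphisms preserves nonvanishing).

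The genuine gap is in the final step: nothing in your setup forces the thread to contain \emph{any} non-isomorphisms, let alone a number growing with $n$. The hypothesis that $f_{k+1}\colon M_{k+1}\to M_{k+2}$ is a non-isomorphism between the ambient indecomposables is simply not inherited by the induced map $\bar f_{k+1}\colon I_k\to I_{k+1}$ between (summands of) the images; it is entirely consistent with your intermediate conclusions that each $I_k$ be indecomposable of constant length $c\le b-1$ and each $\bar f_{k+1}$ be an isomorphism over an arbitrarily long stretch. In that situation your compression step annihilates the whole thread, the inductive hypothesis for $b-1$ is never triggered, and no contradiction is reached. Ruling this out is not bookkeeping --- it is the heart of the lemma. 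The classical proof handles exactly this point with a splitting argument that keeps using indecomposability of the ambient modules: one shows by induction on $j$ that $\ell\bigl(\mathrm{im}(f_{2^j-1}\cdots f_1)\bigr)\le b-j$, and in the inductive step, writing the composite as $g\circ f\circ h$ with $f$ the middle map, the assumption that the image length does \emph{not} drop forces $f|_{\mathrm{im}\,h}$ and $g|_{f(\mathrm{im}\,h)}$ to be injective with $g(f(\mathrm{im}\,h))=\mathrm{im}\,g$, whence $M_{2^j+1}=f(\mathrm{im}\,h)\oplus\ker g$ and, dually, $\mathrm{dom}(f)=\mathrm{im}\,h\oplus\ker f$; indecomposability of these two modules then forces $f$ to be an isomorphism, a contradiction. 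It is precisely this mechanism --- converting a failure of the image length to drop into a nontrivial direct sum decomposition of one of the $M_i$ --- that your proposal lacks, and without it the induction on $b$ cannot close. (This is also where the bound $2^b-1$ actually comes from.)
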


The following is a variation of the Harada-Sai lemma based on the betti numbers, which is also known.
\begin{lemma}[A Harada-Sai lemma for lattices based on betti numbers]
\label{L:Harada-Sai}
Let $\mathcal{L}$ be a subcategory of $\Lambda$-$\textbf{lat}$ with a bounded
$\beta(\mathcal{L})$. If $\xx$ is a faithful system of parameters for $\mathcal{L}$,
then there is $r \in \mathbb{N}$ such that any $r$-fold composition of non-isomorphisms in $\ind(\mathcal{L})$ is zero modulo $\xx^2$.
\end{lemma}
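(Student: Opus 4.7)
The plan is to reduce to the classical Harada--Sai lemma (Lemma \ref{hslemma}) by passing to the artinian quotient $\bar\Lambda := \Lambda/\xx^2 \Lambda$ and interpreting ``zero modulo $\xx^2$'' as the vanishing of the induced morphism $\bar L_0 \to \bar L_r$, where $\bar L := L / \xx^2 L$. Since $\xx$ is a system of parameters of the Cohen--Macaulay ring $R$, the quotient $R/\xx^2$ is artinian and $\bar\Lambda$ is an artin $R/\xx^2$-algebra of finite $R$-length $l_R(\bar\Lambda)$. For any $L \in \mathcal L$ with projective cover $P\twoheadrightarrow L$ (which exists by Lemma \ref{L:semiperfect}), the module $P$ has at most $b:=|\beta(\mathcal L)|$ indecomposable summands, each a direct summand of $\Lambda$; hence $\bar L$, being a quotient of $P/\xx^2 P$, will have $R$-length at most $N:=b\cdot l_R(\bar\Lambda)$, uniformly over $\mathcal L$.

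Next, given indecomposable lattices $L_0,\ldots,L_r \in \ind(\mathcal L)$ and non-isomorphisms $f_i:L_{i-1}\to L_i$, I would pass to the reductions $\bar f_i:\bar L_{i-1}\to\bar L_i$ in $\md\bar\Lambda$. Assuming for contradiction that $\bar f_r\cdots\bar f_1 \ne 0$, I would decompose each $\bar L_i$ into indecomposable $\bar\Lambda$-summands (Krull--Remak--Schmidt, Lemma \ref{L:KRS}), and extract by a pigeonhole argument on matrix components a chain of indecomposable summands $M_i\subseteq \bar L_i$ joined by nonzero induced maps $g_i:M_{i-1}\to M_i$ whose composition is also nonzero. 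Since each $M_i$ is indecomposable of length $\leq N$, Lemma \ref{hslemma} would then force some $g_i$ to be an isomorphism once $r$ exceeds a bound depending on $N$.

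The main obstacle, which I expect to be the technical core of the proof, will be to convert the existence of such an isomorphism $g_i$ between indecomposable summands of $\bar L_{i-1}$ and $\bar L_i$ into a contradiction with $f_i:L_{i-1}\to L_i$ being a non-isomorphism of indecomposable lattices. Here I would invoke the faithfulness of $\xx$ via Lemma \ref{L:stable-vanishing}, which provides $\xx\,\underline\Hom_\Lambda(L_{i-1},L_i)=0$---so that multiplication by each $x\in\xx$ factors through a projective. Combined with the completeness of $R$, which makes the endomorphism rings $\End_\Lambda(L_{i-1})$ and $\End_\Lambda(L_i)$ complete local rings (local by indecomposability), idempotent lifting and a Nakayama-type argument should elevate $g_i$ to a genuine lattice isomorphism $L_{i-1}\cong L_i$ factoring through $f_i$, yielding the desired contradiction and hence the bound on $r$.
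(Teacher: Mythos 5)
Your reduction to the artinian quotient $\Lambda/\xx^2\Lambda$ and your uniform length bound $l_R(\bar L)\le b\cdot l_R(\Lambda/\xx^2\Lambda)$ coincide with the paper's, but the middle of your argument has a genuine gap, and it sits exactly where the faithfulness hypothesis is supposed to enter. The paper uses faithfulness at the very first step: by \cite[Corollary 15.11]{leuschke2012cohen}, if $\xx$ is a faithful s.o.p.\ for $\mathcal L$, then $L/\xx^2L$ is \emph{already indecomposable} for every indecomposable $L\in\mathcal L$ (and the reduced maps $\bar f_i$ remain non-isomorphisms, by the same corollary together with Nakayama). The classical Harada--Sai lemma then applies verbatim to the chain $\bar f_i:\bar L_{i-1}\to\bar L_i$ of non-isomorphisms between indecomposables of bounded length, and the proof is finished. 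Your plan instead treats $\bar L_i$ as possibly decomposable and extracts a path of component maps $g_i$ between indecomposable summands with nonzero composite; that extraction is fine, and Harada--Sai correctly forces some $g_i$ to be an isomorphism. But your final step fails: an isomorphism between \emph{one} indecomposable summand of $\bar L_{i-1}$ and \emph{one} indecomposable summand of $\bar L_i$ carries no information about $f_i$. Two non-isomorphic indecomposable lattices can perfectly well have reductions sharing a common indecomposable direct summand on which $\bar f_i$ restricts to an isomorphism; neither $\xx\,\underline{\Hom}_\Lambda(L_{i-1},L_i)=0$ nor idempotent lifting over the complete local rings $\End_\Lambda(L_i)$ lets you promote such a partial isomorphism of reductions to an isomorphism $L_{i-1}\cong L_i$.

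The completeness and idempotent-lifting machinery you invoke does belong in the proof, but one level earlier: it is what establishes the Leuschke--Wiegand corollary itself. Faithfulness guarantees that every endomorphism of $\bar L$ lifts, up to terms in $\xx\End_\Lambda(L)$, to an endomorphism of $L$; idempotents of $\End_{\overline{\Lambda}}(\bar L)$ therefore lift to idempotents of the local ring $\End_\Lambda(L)$ and must be trivial, so $\bar L$ is indecomposable. Once you have that statement, your decomposition and pigeonhole steps are vacuous and your argument collapses to the paper's two-line proof.
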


\begin{proof}
As $\xx$ is $\mathcal{L}$-faithful, $M_j/\xx^2 M_j$ is indecomposable for any indecomposable object $M_j \in\mathcal{L}$ because of
 \cite[Corollary 15.11]{leuschke2012cohen}. 
If $\oplus_{i=1}^n P_i^{t_i} \to M_j$ is a projective cover with indecomposable projectives $P_{i}$, then, by assumption,  
$\Sigma_{i=1}^nt_i\leq b$ for some $b>0$ independent of~$M_{j}$. Also,
\[
l(M_j/\xx^2 M_j) \leq \Sigma_{i=1}^{n} t_i l(P_{i}/\xx^2 P_{i}) \leq
bl(\Lambda/\xx^2 \Lambda).
\]
If $f_i:M_i\lra M_{i+1}$ are non-isomorphisms in $\ind(\mathcal{L})$, then, by Nakayama's lemma, the family 
$\{f_{i} \otimes_{\Lambda} \Lambda/\xx^2 \Lambda \}$ consists of non-isomorphisms, too. By Lemma \ref{hslemma}, we are done.
\end{proof}

\begin{lemma}[A Harada-Sai lemma for lattices based on $\underline{\h}$-length]\label{T:Harada-Sai}
Let $\mathcal{L}$ be a subcategory of $\Lambda$-$\textbf{lat}$ of bounded $\underline{\h}$-length and $\xx$  an $\mathcal{L}$-faithful s.o.p. (such a system of parameters of $R$ exists by Lemma~\ref{L:bound-sop}). Then there is a number $r\in \mathbb{N}$ such that, for any chain
\[
M_{1} \overset{f_{1}}\longrightarrow M_{2}
\overset{f_{2}}\longrightarrow M_3\st{f_3}\lra\cdots
\]
of non-isomorphisms between indecomposable objects in  $\mathcal{L}$ and any integer $j$, we have
\[
f_{r+j}  \ldots f_{j} \otimes \Lambda/\xx^2\Lambda = 0.
\]
%
\end{lemma}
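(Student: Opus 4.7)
The plan is to reduce this statement to Lemma~\ref{L:Harada-Sai}, the Harada--Sai lemma for lattices based on the betti numbers. The bridge between the two formulations is Lemma~\ref{L:betti}, which tells us that bounded $\underline{\h}$-length forces the betti numbers to be bounded.

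More concretely, I would argue as follows. First, by hypothesis there is an integer $b>0$ such that $l_{R}(\underline{\h}(M)) \le b$ for every $M \in \mathcal{L}$. In particular this bound holds on $\ind(\mathcal{L})$, so Lemma~\ref{L:betti} provides the uniform estimate $\beta(M) \le b$ for all indecomposable $M \in \mathcal{L}$. Thus $\mathcal{L}$ has bounded $\beta(\mathcal{L})$ in the sense required by the previous lemma. The $\mathcal{L}$-faithful system of parameters $\xx$ is supplied directly by Lemma~\ref{L:bound-sop}, which is legitimate since the bounded $\underline{\h}$-length hypothesis is exactly what that lemma requires.

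With the hypotheses of Lemma~\ref{L:Harada-Sai} now in place, we obtain an integer $r \in \mathbb{N}$ such that any $r$-fold composition of non-isomorphisms between indecomposables in $\mathcal{L}$ vanishes modulo $\xx^{2}$. This $r$ depends only on the bound $b$ and on $\xx$, not on the particular chain or on where we start in it. Consequently, for any chain
\[
M_{1} \xrightarrow{f_{1}} M_{2} \xrightarrow{f_{2}} M_{3} \xrightarrow{f_{3}} \cdots
\]
of non-isomorphisms in $\ind(\mathcal{L})$ and any integer $j$, the subchain $M_{j} \to M_{j+1} \to \cdots \to M_{r+j+1}$ is itself a chain of non-isomorphisms between indecomposables in $\mathcal{L}$ of length $r$, so applying Lemma~\ref{L:Harada-Sai} to it yields $f_{r+j} \cdots f_{j} \otimes \Lambda/\xx^{2}\Lambda = 0$.

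There is no real obstacle here: the content of the lemma is already packaged into the combination of Lemma~\ref{L:betti}, Lemma~\ref{L:bound-sop}, and Lemma~\ref{L:Harada-Sai}, and the present statement is essentially the formulation convenient for applications in terms of $\underline{\h}$-length rather than betti numbers. The only thing worth being careful about is the uniformity of $r$, which is clear since it depends only on the a priori bound $b$ for $\beta$ on $\ind(\mathcal{L})$ and on the fixed faithful system of parameters~$\xx$.
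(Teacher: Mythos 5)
Your proposal is correct and follows essentially the same route as the paper: the paper's proof likewise invokes Lemma~\ref{L:bound-sop} for the faithful s.o.p., Lemma~\ref{L:betti} to convert the bound on $\underline{\h}$-length into a bound on $\beta(\mathcal{L})$, and then concludes by the betti-number Harada--Sai lemma (Lemma~\ref{L:Harada-Sai}). Your extra remark on the uniformity of $r$ over all starting indices $j$ is a harmless and accurate elaboration of what the paper leaves implicit.
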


\begin{proof}


By Lemma \ref{L:bound-sop} and~\cite[Corollary 15.11]{leuschke2012cohen}, there is an $\mathcal{L}$-faithful s.o.p. $\xx$ such that, for any indecomposable object $M_i$ in $\mathcal{L}$, the module $M_i/\xx^2 M_i$ is indecomposable.  By Lemma~\ref{L:betti}, $\beta(\mathcal{L}) \leq b$ for some integer $b>0$. By the Harada-Sai lemma based on betti numbers, we are done.
\end{proof}

Notice that the number $r$ in Lemmas \ref{L:Harada-Sai} and \ref{T:Harada-Sai}, is independent of $\mathcal{L}$. In fact, it depends on the faithful s.o.p. $\xx$ and $|\underline{\h}(\mathcal{L})|$.

\section{Orders of bounded lattice type}

In this section we shall use the Harada-Sai lemma to prove the first Brauer-Thrall theorem for lattices.
\medskip

Recall that the transpose $\Tr (M)$ of a finitely presented $\Lambda$-module $M$ is defined as 
\[
\cok(\Hom_{\Lambda}(P_0,\Lambda)\lra\Hom_{\Lambda}(P_1,\Lambda))
\]
where $P_1\lra P_0\lra M\lra 0$ is a minimal projective presentation of $M$. The next two results, dealing with the existence of almost split sequences in the category of lattices, can be proved the same way as \cite[Chapter II, Propositions 8.2 and 8.5]{auslander1978functors} (by replacing $R$ with 
$\omega$)

\begin{lemma}\label{L:a.s.s1}
\begin{enumerate}
 \item If $M$ is an indecomposable non-projective module in $\Lambda$-$\textbf{lat}$, then there is an almost split sequence 
\[
0\lra \tau M \lra L\lra M\lra 0 
\]
in $\Lambda$-$\textbf{lat}$, where $\tau M \simeq \Hom_R(\Omega^d\,\Tr\,M, \omega)$.

\item If $N$ is an indecomposable $\Lambda$-lattice which is not $\textbf{lat}$-injective, then there is an almost split sequence 
\[
0\lra N\lra L\lra \tau^{-} N \lra 0 
\]
in $\Lambda$-$\textbf{lat}$, where 
$\tau^{-} N \simeq \Omega^d\,\Tr\,\Hom_R(N,\omega)$.
\end{enumerate} 
\qed
\end{lemma}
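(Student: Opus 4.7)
The plan is to mimic the proof of Propositions II.8.2 and II.8.5 in \cite{auslander1978functors}, systematically replacing $R$ by the canonical module $\omega$ wherever $R$ was being used as a dualizing object. For part (1), I first verify that $\tau M := \Hom_R(\Omega^d\,\Tr M, \omega)$ is a lattice. Since $M$ is a lattice, taking a minimal projective presentation $P_1 \to P_0 \to M \to 0$ and applying $\Hom_\Lambda(-,\Lambda)$ produces $\Tr M$ as a finitely presented $\Lambda^{\op}$-module. The iterated depth lemma forces $\Omega^d\,\Tr M$ to have depth $d$, hence it is maximal Cohen-Macaulay over $R$; at every non-maximal prime $\p$, $M_\p$ is $\Lambda_\p$-projective, so $(\Tr M)_\p = 0$ and $(\Omega^d\,\Tr M)_\p$ is $\Lambda^{\op}_\p$-projective. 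Thus $\Omega^d\,\Tr M \in \Lambda^{\op}$-$\textbf{lat}$, and the duality $(-)^*=\Hom_R(-,\omega)$ established in the unnumbered lemma of Section~2 carries it to a $\Lambda$-lattice.

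The technical heart is an Auslander--Reiten formula in this CM-with-canonical-module setting: for every $X \in \Lambda$-$\textbf{lat}$ there is a natural isomorphism
\[
\Ext^{1}_{\Lambda}(X, \tau M) \;\cong\; D\,\underline{\Hom}_{\Lambda}(M, X),
\]
where $D = \Hom_R(-, E_R(k))$ is Matlis duality over $R$. To establish this, I would start from the exact sequence describing $\underline{\Hom}_\Lambda(M, X)$ as a cokernel of $\Hom_\Lambda(P_0,X) \to \Hom_\Lambda(P_1,X)$, use the tensor-hom identification $\Hom_\Lambda(P_i, X) \cong \Hom_\Lambda(P_i,\Lambda) \otimes_\Lambda X$, and then shift by $d$ syzygies so that the computation takes place at the level of $\Omega^d\,\Tr M$. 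Applying $\Hom_R(-,\omega)$ in combination with the vanishing $\Ext^{i>0}_R(X,\omega) = 0$ (valid since $X$ is MCM) and local/Cohen-Macaulay duality converts the expression into the desired $\Ext^1$ against $\tau M$. This is exactly the place where $\omega$ substitutes for $R$ in Auslander's argument.

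Granted the AR formula, the almost split sequence is produced in the usual way. Since $M$ is indecomposable and $\Lambda$ is semiperfect (Lemma~\ref{L:semiperfect}), $\underline{\End}_\Lambda(M)$ is a local ring, and it has finite length over $R$ because $M$ is a lattice. Hence $D\,\underline{\End}_\Lambda(M)$ has a simple socle, which corresponds under the AR formula to a distinguished nonzero class $\eta \in \Ext^1_\Lambda(M, \tau M)$. The short exact sequence $0 \to \tau M \to L \to M \to 0$ representing $\eta$ is almost split: naturality of the AR formula implies that for any non-split morphism $f: X \to M$ with $X$ indecomposable in $\Lambda$-$\textbf{lat}$, the induced map $D\,\underline{\Hom}(M, X) \to D\,\underline{\End}(M)$ misses the socle, so $f^*\eta = 0$ and $f$ lifts through $L \to M$. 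Part (2) is then obtained by applying the duality $(-)^*: \Lambda$-$\textbf{lat} \to \Lambda^{\op}$-$\textbf{lat}$ to the almost split sequence for $N^* = \Hom_R(N,\omega)$ over $\Lambda^{\op}$ provided by part~(1); the duality swaps projectives and $\textbf{lat}$-injectives (Lemma~\ref{L:CM-inj}), so the dualized sequence is almost split in $\Lambda$-$\textbf{lat}$, and a direct unwinding of the formulas identifies its left-hand term with $\Omega^d\,\Tr\,\Hom_R(N,\omega)$.

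The main obstacle is the bookkeeping in the AR formula: tracking the several successive dualities, syzygy shifts, and adjunctions with $\omega$ replacing $R$, while verifying that the relevant $\Ext$-vanishings hold for lattices and that the natural transformation produced is compatible with the $\End_\Lambda(M)$-action on both sides. Once that formula is in place, the remainder of the argument is purely formal and reproduces Auslander's original reasoning verbatim.
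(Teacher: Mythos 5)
Your proposal is correct and takes essentially the same route as the paper, which simply cites Auslander's Propositions II.8.2 and II.8.5 and notes that the argument goes through with $R$ replaced by $\omega$; your outline (checking $\tau M$ is a lattice, establishing the Auslander--Reiten duality $\Ext^1_{\Lambda}(X,\tau M)\cong D\,\underline{\Hom}_{\Lambda}(M,X)$, extracting the almost split sequence from the socle of $D\,\underline{\End}_{\Lambda}(M)$, and dualizing with $\Hom_R(-,\omega)$ for part (2)) is exactly that adaptation.
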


The following result can be obtained by the same argument as in
 \cite[Proposition 13.16]{leuschke2012cohen} (given there for $\mcm$ $R$-modules).

\begin{lemma}\label{lem2}
Let 
\[
0\lra K \st{i}\lra L \st{p}\lra M \lra 0
\]
be an almost split sequence in $\Lambda$-$\textbf{lat}$ and 
$N \in \Lambda$-$\textbf{lat}$.
\begin{enumerate}
 \item A $\Lambda$-homomorphism $f:K \lra N$ is irreducible if and only if $f$ is a direct summand of~$i$.
 \item A $\Lambda$-homomorphism $g: N \lra M$ is irreducible if and only if $g$ is a direct summand of~$p$.
\end{enumerate} \qed
\end{lemma}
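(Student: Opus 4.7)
The plan is to adapt the classical Auslander-Reiten argument, paralleling \cite[Proposition 13.16]{leuschke2012cohen}. I will treat part (2) in detail; part (1) then follows by applying the duality $(-)^{*}=\Hom_{R}(-,\omega)$, which interchanges $\Lambda$-$\textbf{lat}$ and $\Lambda^{\op}$-$\textbf{lat}$, swaps the roles of $i$ and $p$, and upgrades the right almost split property to its left analogue via Lemma~\ref{L:a.s.s1} applied on the opposite side. By Krull-Remak-Schmidt (Lemma~\ref{L:KRS}), I may assume throughout that $N$ is indecomposable.

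For the forward direction of (2), since $g$ is irreducible it is not a retraction, and the right almost split property of the sequence yields a lift $h\colon N\to L$ with $g=ph$. Irreducibility of $g$ applied to this factorization in $\Lambda$-$\textbf{lat}$ forces either $h$ to be a section or $p$ to be a retraction; the latter is impossible since the almost split sequence is non-split, so $h$ is a section. This provides a decomposition $L\cong N\oplus N'$ under which $p|_{N}=g$, exhibiting $g$ as a direct summand of $p$.

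For the converse, assume $L=N\oplus N'$ and $p=(g,g')$. A direct check using indecomposability of $M$ and non-splitness of the sequence shows that $g$ is neither a section nor a retraction. Given any factorization $g=\beta\alpha$ with $\alpha\colon N\to X$ and $\beta\colon X\to M$ in $\textbf{lat}$, suppose $\beta$ is not a retraction; the right almost split property then gives $\gamma\colon X\to L$ with $\beta=p\gamma$. Writing $\gamma=(\gamma_{1},\gamma_{2})^{T}$ according to $L=N\oplus N'$, the map $\iota_{N}-\gamma\alpha\colon N\to L$ is killed by $p$ and so factors as $i\phi$ for some $\phi\colon N\to K$. Projecting to the first coordinate produces the key identity
\[
1_{N}=\gamma_{1}\alpha+\pi_{N}i\phi
\]
in the local ring $\End_{\Lambda}(N)$.

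The main technical obstacle is the case analysis in this local ring. If $\gamma_{1}\alpha$ is a unit, then $(\gamma_{1}\alpha)^{-1}\gamma_{1}$ is a left inverse of $\alpha$, so $\alpha$ is a section and we are done. Otherwise $\pi_{N}i\phi$ is the unit, and I would rewrite $g(1-\gamma_{1}\alpha)=g'\gamma_{2}\alpha$ as $g=g'\tau$ with $\tau:=\gamma_{2}\alpha(1-\gamma_{1}\alpha)^{-1}\colon N\to N'$. The shear $(n,n')\mapsto(n,n'-\tau n)$ then provides a new decomposition $L=\tilde N\oplus N'$ with $\tilde N\cong N$, under which $p$ becomes $(0,g')$. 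Consequently the almost split sequence decomposes as the direct sum of $0\to N\xrightarrow{1_{N}}N\to 0\to 0$ and $0\to\ker g'\to N'\xrightarrow{g'}M\to 0$, yielding $K\cong N\oplus\ker g'$. Since $K\cong\tau M$ is indecomposable (by classical Auslander-Reiten theory applied to the indecomposable $M$) and $N\neq 0$, necessarily $\ker g'=0$; then $g'\colon N'\to M$ is an isomorphism (it is also surjective because $p=g'\circ(\tau,1_{N'})$ is surjective and $(\tau,1_{N'})$ is surjective), so $\iota_{N'}(g')^{-1}$ is a section of $p$, contradicting non-splitness of the almost split sequence. Hence this case cannot occur, and $\alpha$ must be a section, completing the argument.
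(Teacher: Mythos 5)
Your proof is correct and is precisely the standard Auslander--Reiten argument (lifting over the right almost split map for the forward direction; the local-endomorphism-ring case analysis plus the shear trick for the converse) that the paper itself invokes by simply citing \cite[Proposition 13.16]{leuschke2012cohen} without writing out details. One caveat: your reduction ``by Krull--Remak--Schmidt I may assume $N$ is indecomposable'' does not actually justify the converse direction for decomposable $N$ (irreducibility of $(g_1,g_2)$ is not detected componentwise, and your use of the locality of $\End_{\Lambda}(N)$ genuinely requires $N$ indecomposable), but this matches the scope of the cited source and of every application of the lemma in the paper, all of which concern indecomposable $N$.
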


\begin{lemma}\label{proj-case}
Let $M$ be an indecomposable lattice and $P$ an indecomposable finitely generated projective $\Lambda$-module. If
$f : M \longrightarrow P$ is irreducible in $\Lambda$-$\textbf{lat}$, then $M$ is a direct summand of a minimal $\textbf{lat}$-approximation of $\ra\, P$.
\end{lemma}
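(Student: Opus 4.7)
The plan is to exploit the fact that irreducible maps into an indecomposable projective must factor through its radical, together with the existence of a minimal \textbf{lat}-approximation of the radical. The key observation is that, because $\Lambda$ is semiperfect (Lemma~\ref{L:semiperfect}), the indecomposable projective $P$ is local: $P/\ra\,P$ is a simple module~$S$, and $\ra\,P$ is the unique maximal submodule of $P$. In particular, $\ra\,P = \Omega\,S$ is a syzygy of a finite-length module, so by Proposition~\ref{P:approximation} and Lemma~\ref{minimal} it admits a minimal \textbf{lat}-approximation, say $\alpha_0 : X_0 \lra \ra\,P$.

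Next I would show that the irreducible map $f : M \lra P$ must land inside $\ra\,P$. Since $P$ is projective, any surjection onto $P$ splits; hence if $f$ were surjective, it would be a retraction, contradicting irreducibility. Thus $\im f$ is a proper submodule of $P$ and so $\im f \subseteq \ra\,P$. Write $f = \iota \circ f'$ where $\iota : \ra\,P \hookrightarrow P$ is the inclusion and $f' : M \lra \ra\,P$.

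Now I would use the \textbf{lat}-approximation property of $\alpha_0$: because $M \in \Lambda$-$\textbf{lat}$, the map $f'$ lifts (see the remark following the definition of approximations) to a homomorphism $\tilde f : M \lra X_0$ with $\alpha_0 \tilde f = f'$. Consequently
\[
f \;=\; (\iota \circ \alpha_0) \circ \tilde f
\]
is a factorization of $f$ through the lattice $X_0$. Since $f$ is irreducible, either $\tilde f$ is a section or $\iota \circ \alpha_0$ is a retraction. The latter cannot happen, because $\im(\iota \circ \alpha_0) \subseteq \ra\,P \subsetneq P$ and thus $\iota \circ \alpha_0$ is not surjective, so it cannot admit a right inverse. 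Therefore $\tilde f$ is a section, and $M$ is a direct summand of $X_0$, as required.

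The proof is essentially a verification and there is no serious obstacle; the only point requiring a bit of care is confirming that $\ra\,P$ falls within the scope of Proposition~\ref{P:approximation} (handled by recognizing it as a syzygy of the simple top of $P$) and that semiperfectness forces $P$ to be local so that $\im f \subseteq \ra\,P$.
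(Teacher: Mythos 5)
Your proof is correct and follows essentially the same route as the paper's: observe that irreducibility forces $f$ to factor through $\ra\,P$, lift the resulting map over the minimal \textbf{lat}-approximation of $\ra\,P$ (which exists because $\ra\,P$ is the first syzygy of the simple top of $P$), and conclude that the lift is a section since the composite through the approximation cannot be a retraction. Your extra care in justifying $\im f \subseteq \ra\,P$ via semiperfectness is exactly the content the paper leaves implicit.
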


\begin{proof}\label{irred}
 First, notice that $f(M) \subset \ra P$, for otherwise
 $f$ would be a retraction, which is impossible because $f$ is irreducible. Since $\ra P$ is the first syzygy module of the simple $\Lambda$-module
 $P/\ra P$, by Proposition \ref{P:approximation}, there is a minimal $\textbf{lat}$-approximation $q : X \longrightarrow \ra P$. We now have a commutative diagram of solid arrows;

\[
\xymatrix
	{
	M \ar[r]^{f} \ar[rd]^{f'} \ar@{-->}[d]_{h}
	& P
\\
	X \ar[r]^{q}
	& \ra\, P \ar@{^{(}->}[u]_{\iota}	
	}
\]
Since $M$ is a lattice, there is $h : M \to X$ making the lower triangle commute and giving rise to a factorization of $f$. Since $\iota$ is not epic, the same is true for $\iota q$, hence $h$ must be a section.
\end{proof}

Notice that $f$ in the above lemma is a direct summand of $\iota q$.
\begin{cor}\label{C:irred}
Let $M$ be an indecomposable object in $\add\,\Hom_R(\Lambda_{\Lambda},\omega)$ (i.e., $M$ is an indecomposable 
$\textbf{lat}$-injective). Then there are only finitely many (up to isomorphism)
indecomposable lattices $X$ admitting an irreducible morphism $f:M\lra X$.
\end{cor}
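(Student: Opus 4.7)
The plan is to dualize into $\omega$ and thereby reduce the injective case to the projective case already handled in Lemma~\ref{proj-case} (applied to the opposite order).

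First, recall that by Lemma~\ref{L:opposite} the opposite $\Lambda^{\op}$ is again an $R$-order, and by the duality lemma immediately following Lemma~\ref{L:opposite}, the functor $(-)^{*}=\Hom_{R}(-,\omega)$ is an exact duality between $\Lambda$-$\textbf{lat}$ and $\Lambda^{\op}$-$\textbf{lat}$. In particular, indecomposability is preserved, and a $\Lambda$-homomorphism $f\colon M\to X$ of lattices is irreducible if and only if its dual $f^{*}\colon X^{*}\to M^{*}$ is an irreducible morphism of $\Lambda^{\op}$-lattices (being a duality, it carries sections to retractions, non-sections to non-retractions, and factorizations to factorizations in the opposite order). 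Moreover, by Lemma~\ref{L:CM-inj}, the hypothesis that $M$ is an indecomposable $\textbf{lat}$-injective is equivalent to $M$ being an indecomposable summand of $\Hom_{R}(\Lambda_{\Lambda},\omega)$, so by reflexivity $M^{*}$ is an indecomposable summand of $\Lambda^{\op}$, i.e., an indecomposable projective $\Lambda^{\op}$-module.

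Now suppose $f\colon M\to X$ is an irreducible morphism in $\Lambda$-$\textbf{lat}$ with $X$ indecomposable. Dualizing, we obtain an irreducible morphism $f^{*}\colon X^{*}\to M^{*}$ in $\Lambda^{\op}$-$\textbf{lat}$ with $M^{*}$ an indecomposable projective and $X^{*}$ indecomposable. Apply Lemma~\ref{proj-case} in the order $\Lambda^{\op}$: there exists a minimal $\textbf{lat}$-approximation $q\colon Y\to \ra\,M^{*}$ in $\Lambda^{\op}$-$\textbf{lat}$ (existence is guaranteed by Proposition~\ref{P:approximation} together with Lemma~\ref{minimal}, both of which apply equally to $\Lambda^{\op}$), and $X^{*}$ is a direct summand of $Y$.

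By the Krull-Remak-Schmidt theorem for lattices over $R$-orders (Lemma~\ref{L:KRS}), the single lattice $Y$ has only finitely many isomorphism classes of indecomposable direct summands. Therefore, only finitely many isomorphism classes of indecomposable $\Lambda^{\op}$-lattices $X^{*}$ can occur, and dualizing back, only finitely many isomorphism classes of indecomposable $\Lambda$-lattices $X$ can admit an irreducible morphism from $M$.

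I do not expect a serious obstacle here: the only points that require a moment's care are (i) checking that $(-)^{*}$ preserves irreducibility, which is formal from the definition of irreducibility together with exactness of the duality, and (ii) identifying $M^{*}$ as an indecomposable projective in $\Lambda^{\op}$-$\textbf{lat}$, which follows directly from Lemma~\ref{L:CM-inj} and the reflexivity $M\simeq M^{**}$ on lattices.
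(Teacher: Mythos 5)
Your proposal is correct and is essentially the paper's own argument: the paper likewise dualizes $f$ to an irreducible $f^{*}\colon X^{*}\to M^{*}$ with $M^{*}$ indecomposable projective over $\Lambda^{\op}$, applies Lemma~\ref{proj-case} to realize $X^{*}$ as a summand of the minimal $\textbf{lat}$-approximation of $\ra\,M^{*}$, and concludes by Krull--Remak--Schmidt. Your write-up just makes explicit the routine verifications (duality preserves irreducibility, $M^{*}$ is indecomposable projective) that the paper leaves implicit.
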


\begin{proof}
If $f: M\lra X$ is irreducible, then so is $f^*:X^*\lra M^*$. By Lemma \ref{proj-case}, $X^*$ is a direct summand of minimal $\textbf{lat}$-approximation of $\ra M^*$, which yields the desired result.
\end{proof}




\begin{remark}\label{R:fin-many}
Lemmas \ref{lem2}, \ref{proj-case}, and Corollary \ref{C:irred} show that there are only finitely many (up to isomorphism) irreducible morphisms in $\Lambda$-$\textbf{lat}$ starting or ending at an indecomposable lattice.
%
\end{remark}

 \begin{lemma}\label{lem1}
Let $M$ be an indecomposable lattice and $\mathcal{L}$ the class of (representatives of isoclasses of) all indecomposable lattices that $M$ is connected to.
If $\mathcal{L}$ is of bounded $\underline{\h}$-length and $\xx$ is a faithful s.o.p. for $\mathcal{L}$, then the subclass $\mathcal{L}'$ of $\mathcal{L}$
of objects $M$ is connected to modulo $\xx^2$ is finite.
\end{lemma}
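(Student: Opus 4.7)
The plan is to combine the Harada--Sai lemma for lattices (Lemma~\ref{T:Harada-Sai}) with the local finiteness of the irreducible morphisms (Remark~\ref{R:fin-many}) to build a finite, finitely-branching tree of bounded depth containing all of $\mathcal{L}'$.

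First, I would apply Lemma~\ref{T:Harada-Sai} to the subcategory $\mathcal{L}$ and the given $\mathcal{L}$-faithful s.o.p.\ $\xx$, producing an integer $r \in \mathbb{N}$ such that any $r$-fold composition of non-isomorphisms between indecomposable objects of $\mathcal{L}$ vanishes upon tensoring with $\Lambda/\xx^2\Lambda$. Because an irreducible map is by definition neither a section nor a retraction, no irreducible morphism between indecomposables can be an isomorphism. Hence any chain
\[
M = L_0 \xrightarrow{f_1} L_1 \xrightarrow{f_2} \cdots \xrightarrow{f_n} L_n = N
\]
of irreducible morphisms between indecomposable lattices that witnesses $N \in \mathcal{L}'$ (i.e.\ with $f_n \cdots f_1 \otimes_\Lambda \Lambda/\xx^2\Lambda \neq 0$) must have length $n < r$. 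One small point to check before applying Harada--Sai: every intermediate $L_i$ in such a chain belongs to $\mathcal{L}$, since the truncated chain $M \to L_1 \to \cdots \to L_i$ shows that $M$ is connected to $L_i$.

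Second, by Remark~\ref{R:fin-many}, each indecomposable lattice admits only finitely many (up to isomorphism) irreducible morphisms to indecomposable lattices. I would then construct iteratively a tree rooted at $M$: at depth $k+1$ collect the isoclasses of all indecomposable lattices $L'$ for which there exists an irreducible morphism $L \to L'$ from some $L$ at depth $k$. This tree has finite branching at each node by Remark~\ref{R:fin-many}, and by the previous paragraph its restriction to depth $\leq r-1$ already contains every object of $\mathcal{L}'$. A finitely-branching tree of bounded depth has only finitely many vertices, so $\mathcal{L}'$ is finite.

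The main obstacle is purely bookkeeping: making sure the Harada--Sai lemma can be invoked on the particular chains that realize membership in $\mathcal{L}'$. This reduces to the observation above that every intermediate lattice in such a chain lies in $\mathcal{L}$, so that both the $\underline{\h}$-length bound and the $\mathcal{L}$-faithfulness of $\xx$ are genuinely available. Once this is clear, the combinatorics of a finite tree of depth at most $r-1$ with finite branching finishes the argument.
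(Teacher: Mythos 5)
Your proposal is correct and is essentially the paper's own argument: finitely many irreducible maps out of each indecomposable (Remark~\ref{R:fin-many}) gives finite branching, and the Harada--Sai lemma~\ref{T:Harada-Sai} bounds the length of any chain whose composition survives modulo $\xx^2$, so the resulting tree rooted at $M$ is finite. Your explicit check that the intermediate lattices of a witnessing chain lie in $\mathcal{L}$ (so that the $\underline{\h}$-length bound and faithfulness of $\xx$ apply) is a useful point that the paper leaves implicit.
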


 \begin{proof} Since $\mathcal{L}$ is of bounded $\underline{\h}$-length, there is a faithful s.o.p. $\xx$ for $\mathcal{L}$. By Remark~\ref{R:fin-many}, there are only finitely many indecomposable lattices $L_i$ admitting irreducible maps $f_i:M \to L_i$. For the same reason, for any $i$, there are only finitely many lattices $N_{j}'$ admitting irreducible maps $g_j:L_i \to N_{j}'$. $M$ may not be connected to all of the $N_{j}$ modulo $\xx^2$ (some compositions starting at $M$ may conceivably be zero modulo $\xx^2$), but those which $M$ is still connected to constitute a finite set. We can try and repeat this process but, by the Harada-Sai lemma \ref{T:Harada-Sai}, it will terminate after finitely many steps.
\end{proof}


\begin{prop}\label{prop1}
Let $(\xx)\subseteq\m$ be an ideal of $R$ and let $M$ be an 
indecomposable lattice such that no indecomposable projective $\Lambda$-module is connected to it modulo $\xx$. 
Then there is an infinite chain
\[
\cdots\lra M_2\st{h_2}\lra M_1\st{h_1}\lra M
\]
of indecomposable lattices $M_i$ and irreducible maps $h_i$ such that
\[
h_{1}\cdots h_{n}\otimes_{\Lambda}\Lambda/\xx\Lambda \neq 0
\]
for any $n\in\mathbb{N}$.
\end{prop}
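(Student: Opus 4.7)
The idea is to construct the chain inductively from the right, using the almost split sequence terminating at $M$ (and then at each $M_n$ in turn) to produce the next irreducible map. First observe that $M$ itself cannot be projective: by definition $M$ is connected to $M$ modulo $\xx$ trivially, so if $M$ were an indecomposable projective this would contradict the hypothesis. Thus Lemma~\ref{L:a.s.s1} yields an almost split sequence $0 \to \tau M \to L \to M \to 0$ in $\Lambda$-$\textbf{lat}$.

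For the inductive step, suppose that an indecomposable non-projective lattice $M_n$ and irreducible maps $h_1,\dots,h_n$ between indecomposable lattices have been constructed so that $h_1\cdots h_n \otimes_\Lambda \Lambda/\xx\Lambda \neq 0$. Applying Lemma~\ref{L:a.s.s1} to $M_n$ gives an almost split sequence $0 \to \tau M_n \to L_n \st{p_n}\lra M_n \to 0$. Decompose $L_n \simeq \bigoplus_j N_j^{e_j}$ into indecomposables; by Lemma~\ref{lem2} the restriction of $p_n$ to each $N_j$ is an irreducible map $k_j : N_j \to M_n$. Since $p_n$ is surjective, the induced map $L_n/\xx L_n \to M_n/\xx M_n$ is surjective too, and composing with the nonzero map $h_1\cdots h_n \otimes \Lambda/\xx\Lambda$ (the codomain $M/\xx M$ is nonzero by Nakayama as $\xx\subseteq\m$) we obtain a nonzero map $L_n/\xx L_n \to M/\xx M$. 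Decomposing this composition as $[\,h_1\cdots h_n k_1,\, \ldots,\, h_1\cdots h_n k_m\,]$ (up to multiplicities) forces $h_1\cdots h_n k_j \otimes \Lambda/\xx\Lambda \neq 0$ for at least one~$j$. Set $M_{n+1} := N_j$ and $h_{n+1} := k_j$.

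It remains to verify that $M_{n+1}$ is again non-projective, so that the induction can proceed. But the chain $M_{n+1}\st{h_{n+1}}\lra M_n \st{h_n}\lra \cdots \st{h_1}\lra M$ consists of irreducible maps between indecomposable lattices whose composition is nonzero modulo $\xx\Lambda$, so by definition $M_{n+1}$ is connected to~$M$ modulo $\xx$. Were $M_{n+1}$ an indecomposable projective, this would contradict the hypothesis; hence $M_{n+1}$ is non-projective and the induction continues, producing the required infinite chain.

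The only potential obstacle is the surjectivity argument used to pass from a nonzero $n$-fold composition to a nonzero $(n{+}1)$-fold composition, but this is immediate from right exactness of $-\otimes_\Lambda \Lambda/\xx\Lambda$ applied to the epimorphism $p_n$ from the almost split sequence. The appeal to the hypothesis at the very end of each induction step is what keeps us inside the realm of indecomposable non-projective lattices and thereby guarantees the existence of the next almost split sequence.
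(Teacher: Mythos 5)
Your proof is correct, and its overall skeleton coincides with the paper's: induct from the right, use Lemma~\ref{L:a.s.s1} to get an almost split sequence ending at the current term, use Lemma~\ref{lem2} to see that the components of its right-hand map are irreducible, and use the hypothesis (no indecomposable projective connects to $M$ modulo $\xx$) to guarantee that each new term is non-projective so that the induction can continue. Where you genuinely diverge is in how the nonvanishing of the composition modulo $\xx$ is propagated. The paper threads an auxiliary witness through the entire construction: it fixes an indecomposable projective $P$ and maps $g_n : P \to M_n$ with $h_1\cdots h_n g_n \otimes_\Lambda \Lambda/\xx\Lambda \neq 0$, obtained at each stage by lifting the non-retraction $g_{n-1}$ over the almost split epimorphism. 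You dispense with this bookkeeping entirely: since the right-hand map $p_n$ of the almost split sequence is epic and reduction modulo $\xx$ is right exact, $p_n \otimes \Lambda/\xx\Lambda$ is surjective, so precomposing the nonzero map $h_1\cdots h_n \otimes \Lambda/\xx\Lambda$ with it yields a nonzero map, and at least one indecomposable component of $L_n$ must carry a nonzero restriction. This is a cleaner and more elementary argument for that step; the Nakayama remark (needed so that $M/\xx M \neq 0$, i.e.\ so that the empty composition in the base case is nonzero) is correctly placed, and the final observation that $M_{n+1}$ cannot be projective, since otherwise it would be an indecomposable projective connected to $M$ modulo $\xx$ via the chain just built, is exactly what the paper leaves implicit in the phrase ``by assumption, $M_{n-1}$ is not projective.''
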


\begin{proof}
We induct on $n$. For the induction base, notice that, by assumption, $M$ is not projective. By Lemma \ref{L:a.s.s1}, there exists an almost split sequence  
\[
0\lra \tau M\lra E\st{h}\lra M\lra 0
\] 
in $\Lambda$-$\textbf{lat}$. The middle term decomposes as 
$E = \oplus M_{i,1}$, where each summand is indecomposable and each restriction $h_{i,1}$ of $h$ to $M_{i,1}$ is irreducible. Pick any epimorphism from a finitely generated projective onto $M$. By Nakayama's lemma (over $R$), there exist an indecomposable finitely generated projective 
$\Lambda$-module~$P$ and $f : P \to M$ such that $f \otimes \Lambda/\xx\Lambda \neq 0$. Since~$M$ is not projective, $f$ cannot be a retraction and hence lifts over $h$. Thus $f = hg_{1}$ for some 
$g_{1} : P \to E$. It follows that, for some $i$, $h_{i,1}g_{1} \otimes \Lambda/\xx\Lambda \neq 0$. Now we set $M_{1} : = M_{i,1}$ and  $h_{1} := h_{i,1}$. This completes the induction base. 

For the induction step, assume that we have constructed a chain 
\[
M_{n-1} \overset{h_{n-1}}\lra \ldots \overset{h_{2}}\lra M_{1} \overset{h_{1}}\lra M
\]
of irreducible maps and indecomposable lattices, together with $g_{n-1} : P \to M_{n-1}$ such that
\[
h_{1}\ldots h_{n-1}g_{n-1}\otimes \Lambda/\xx\Lambda \neq 0 
\]
By assumption, $M_{n-1}$ is not projective, and therefore  there is an almost split sequence in $\Lambda$-$\textbf{lat}$ ending with $M_{n-1}$. Similar to the above, we construct $h_{n} : M_{n} \to M_{n-1}$ and 
$g_{n} : P \to M_{n}$, where $M_{n}$ is indecomposable, $h_{n}$ is irreducible,
and $h_{1}\ldots h_{n}g_{n}\otimes \Lambda/\xx\Lambda \neq 0$. It follows
that $h_{1}\ldots h_{n} \otimes \Lambda/\xx\Lambda \neq 0$, and we are done by induction.
\end{proof}

\begin{definition}\label{def1}
We say that the order $\Lambda$ is of \texttt{bounded lattice type} if the category $\ind(\Lambda$-$\textbf{lat})$ is of bounded $\underline{\h}$-length.
\end{definition}
Now, we state a version of the first Brauer-Thrall theorem for the category of lattices.

\begin{theorem}\label{bounded}
If $\Lambda$ is of bounded lattice type,
then $\Lambda$ is of finite $\textbf{lat}$-type.
\end{theorem}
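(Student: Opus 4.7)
The plan is to combine the Harada--Sai lemma for lattices (Lemma~\ref{T:Harada-Sai}) with Proposition~\ref{prop1} and Lemma~\ref{lem1}. Set $\mathcal{L}:=\ind(\Lambda\text{-}\textbf{lat})$, which has bounded $\underline{\h}$-length by hypothesis, and use Lemma~\ref{L:bound-sop} to pick an $\mathcal{L}$-faithful system of parameters $\xx$ of $R$. The idea is to cover $\mathcal{L}$ by finitely many finite pieces, one for each indecomposable projective $\Lambda$-module, each piece consisting of those indecomposables reachable from a single projective through a chain of irreducible maps whose composition survives reduction modulo $\xx$.

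First I would establish that for every indecomposable lattice $M$, some indecomposable projective $P$ is connected to $M$ modulo $\xx$. If $M$ itself is projective, take $P=M$. Otherwise, suppose toward contradiction that no indecomposable projective is connected to $M$ modulo $\xx$. Proposition~\ref{prop1} then produces an infinite chain
\[
\cdots \lra M_2 \st{h_2}\lra M_1 \st{h_1}\lra M
\]
of irreducible maps between indecomposable lattices with $h_1\cdots h_n\otimes_{\Lambda}\Lambda/\xx\Lambda \neq 0$ for every $n$. Because $\xx^2\subseteq\xx$, such a composition is also nonzero modulo $\xx^2\Lambda$; but irreducible maps are non-isomorphisms, so Lemma~\ref{T:Harada-Sai} supplies an $r$ with $h_1\cdots h_{r+1}\otimes_{\Lambda}\Lambda/\xx^2\Lambda=0$, a contradiction.

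Next, since $\Lambda$ is semiperfect (Lemma~\ref{L:semiperfect}) with finitely many simple modules, there are only finitely many indecomposable projective lattices $P_1,\ldots,P_n$. For each $P_i$ I would apply Lemma~\ref{lem1} with the role of its $M$ played by $P_i$; the class of indecomposable lattices $P_i$ is connected to lies in $\mathcal{L}$, hence has bounded $\underline{\h}$-length, and $\xx$ remains faithful for it. The lemma produces a finite set $\mathcal{L}_{P_i}':=\{N\in\mathcal{L}: P_i\text{ is connected to }N\text{ modulo }\xx^2\}$. The same inclusion $\xx^2\subseteq\xx$ gives $\mathcal{L}_{P_i}:=\{N\in\mathcal{L}: P_i\text{ is connected to }N\text{ modulo }\xx\}\subseteq \mathcal{L}_{P_i}'$, so $\mathcal{L}_{P_i}$ is finite. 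Combined with the first step this yields $\mathcal{L}=\bigcup_{i=1}^{n}\mathcal{L}_{P_i}$, a finite union of finite sets, proving $\Lambda$ is of finite $\textbf{lat}$-type.

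The main conceptual hurdle is the mismatch between Proposition~\ref{prop1} (whose output lives modulo $\xx$) and the two finiteness inputs, Lemma~\ref{T:Harada-Sai} and Lemma~\ref{lem1} (whose natural statements involve $\xx^2$). The inclusion $\xx^2\subseteq\xx$ bridges both gaps by turning ``nonzero modulo $\xx$'' into ``nonzero modulo $\xx^2$'' and, symmetrically, ``connected modulo $\xx$'' into ``connected modulo $\xx^2$''. Once this translation is in place, the proof is essentially an assembly of the previously established machinery.
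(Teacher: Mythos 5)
Your proof is correct and follows essentially the same route as the paper: cover $\ind(\Lambda\text{-}\textbf{lat})$ by the classes of lattices that the finitely many indecomposable projectives connect to, use Lemma~\ref{lem1} for finiteness of each class, and use Proposition~\ref{prop1} together with the Harada--Sai Lemma~\ref{T:Harada-Sai} to show the cover is exhaustive. The only differences are cosmetic: the paper argues by contradiction and works with connectivity modulo $\xx^2$ throughout, whereas you argue directly modulo $\xx$ and bridge to $\xx^2$ via the (correct) observation that nonvanishing modulo $\xx$ implies nonvanishing modulo $\xx^2$.
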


\begin{proof}
Since $\ind(\Lambda$-$\textbf{lat})$ is of bounded $\underline{\h}$-length, there is a faithful s.o.p. $\xx$ for $\Lambda$-$\textbf{lat}$. For each indecomposable projective $\Lambda$-module we consider the indecomposable lattices to which the projective connects modulo $\xx^2$. Let~$\mathcal{L}$ denote the union of those classes. By the assumption, $\mathcal{L}$ is of bounded $\underline{\h}$-length. Lemma~\ref{lem1} and the fact that there are only finitely many finitely generated indecomposable projectives show that $\mathcal{L}$ is of finite type. Suppose now that $\Lambda$ is of infinite $\textbf{lat}$-type and pick a lattice $M$ from the complement of $\mathcal{L}$. By Proposition~\ref{prop1}, there is an infinite chain 
\[
\cdots\lra M_2\st{h_2}\lra M_1\st{h_1}\lra M_0\st{h_0}\lra M
\]
of irreducible maps $h_i$ in the complement of $\mathcal{L}$ such that $h_0\cdots h_n\otimes_{\Lambda}\Lambda/\xx^2\Lambda\neq 0$ for any~$n$.
%
On the other hand, since~$\Lambda$-$\textbf{lat}$ is of bounded $\underline{\h}$-length, Lemma \ref{T:Harada-Sai} yields an integer $r>0$ such that $h_0\cdots h_r\otimes_{\Lambda}\Lambda/\xx^2\Lambda=0$, a contradiction.
\end{proof}

Recall that $R$ is said to be of finite mCM type if there are only finitely many classes of non-isomorphic indecomposable $\mcm$ $R$-modules. $R$ is said to be of bounded mCM type, if there is a bound on the multiplicities of the indecomposable $\mcm$ $R$-modules. The following result can be found in~\cite[Corollary 5.3]{bahlekehpreprint}.

\begin{cor}\label{cor1}
Let $(R,\m)$ be a complete Cohen-Macaulay local ring. If $\ind(\mcm)$ is of bounded $\underline{\h}$-length (we set $\Lambda := R$), then $R$ is of finite mCM type.
\end{cor}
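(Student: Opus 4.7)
The plan is to apply Theorem~\ref{bounded} with $\Lambda:=R$ and then bootstrap from the subcategory $R$-$\textbf{lat}$ back to all of $\mcm$. First I would check that $R$ is itself an $R$-order in the sense of Definition~\ref{D:order}: conditions (a), (b), (c) of Definition~\ref{lattice-order} are trivial for $R$ as an $R$-module, while condition (d) demands that $\omega_{\mathfrak{p}}$ be $R_{\mathfrak{p}}$-free at every non-maximal prime $\mathfrak{p}$, i.e.\ that $R$ be Gorenstein on the punctured spectrum. We will see this becomes automatic from the bounded-$\underline{\h}$-length hypothesis.

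With $\Lambda=R$, the category $R$-$\textbf{lat}$ coincides with $\mcm_{0}$, the full subcategory of maximal Cohen-Macaulay modules that are free on the punctured spectrum. Since $\ind(\mcm_{0})\subseteq\ind(\mcm)$, the hypothesis restricts to a uniform bound on $\underline{\h}$-length for objects of $\ind(R\text{-}\textbf{lat})$. Thus, by Definition~\ref{def1}, $R$ is of bounded lattice type, and Theorem~\ref{bounded} gives that $R$ is of finite $\textbf{lat}$-type, i.e.\ there are only finitely many indecomposable $\mcm_{0}$-modules.

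It remains to promote this to finite mCM type, for which one needs $\mcm=\mcm_{0}$; equivalently, $R$ must be an isolated singularity. Here the hypothesis is applied to \emph{all} of $\ind(\mcm)$, not just to $\ind(\mcm_{0})$. By (the argument of) Lemma~\ref{L:betti}, the bound on $\underline{\h}$-length yields a uniform bound on the minimal numbers of generators of all objects of $\ind(\mcm)$. A Yoshino-style argument then produces, from any hypothetical non-regular non-maximal localization $R_{\mathfrak{p}}$, an infinite family of indecomposable mCM $R$-modules (pulled back from $\mcm(R_{\mathfrak{p}})$) whose Betti numbers (hence $\underline{\h}$-length) exceed any prescribed bound, contradicting the hypothesis. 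Consequently $R_{\mathfrak{p}}$ is regular for every non-maximal $\mathfrak{p}$, so $\mcm=\mcm_{0}=R\text{-}\textbf{lat}$, and the finite $\textbf{lat}$-type obtained in the previous step becomes finite mCM type. The main obstacle is the last step: carrying out the Yoshino-type localization/lifting argument precisely in terms of the $\underline{\h}$-length invariant rather than the multiplicity, which is where one appeals to the methods of~\cite{bahlekehpreprint}.
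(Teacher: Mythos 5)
Your first half agrees with the paper: once one knows $R$-$\textbf{lat}=\mcm$, the statement is exactly Theorem~\ref{bounded} applied with $\Lambda:=R$. The trouble is in how you propose to establish $\mcm=\mcm_0$, i.e., that $R$ is an isolated singularity. Two concrete problems. First, your appeal to Lemma~\ref{L:betti} for \emph{all} of $\ind(\mcm)$ is not licensed: that lemma is stated and proved only for subcategories of $\ind(\Lambda$-$\textbf{lat})$, and its proof uses that maps $L\to S$ lift over the $\textbf{lat}$-approximation $G\to S$, which is guaranteed only when $L$ is a lattice. Second, and more seriously, the ``Yoshino-style localization/lifting argument'' that is supposed to manufacture, from a non-regular non-maximal localization $R_{\p}$, an infinite family of indecomposable mCM $R$-modules of unbounded $\underline{\h}$-length is never carried out; you yourself flag it as the main obstacle. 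As written this is a gap, not a proof, and it is also the wrong tool for the job. (There is also a mild circularity: you invoke Theorem~\ref{bounded} before verifying that $R$ is an $R$-order, which itself needs $R$ to be Gorenstein on the punctured spectrum.)

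The missing step is far more elementary and is what the paper actually does. The hypothesis gives, for each individual $M\in\ind(\mcm)$, that $l_R(\underline{\Hom}_R(M,M))<\infty$. Stable Hom of finitely generated modules localizes, so $\underline{\Hom}_{R_{\p}}(M_{\p},M_{\p})=0$ for every non-maximal prime $\p$, i.e., $M_{\p}$ is free. Thus every mCM module is free on the punctured spectrum; applying this to $\Omega_R^{d}(R/\p)$ shows $\kappa(\p)$ has finite projective dimension over $R_{\p}$, so $R_{\p}$ is regular and $R$ is an isolated singularity. Consequently $R$ is an $R$-order, $\mcm=\mcm_0=R$-$\textbf{lat}$, and Theorem~\ref{bounded} finishes the proof. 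No Betti-number bound, no Harada--Sai-type construction, and no lifting of $R_{\p}$-modules is needed for this part.
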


\begin{proof}
By the assumption $l_R(\underline{\h}(M))$ is finite, for any $\mcm$ $R$-module $M$. So $R$ is an isolated singularity (i.e., $R_{\p}$ is a regular local ring for all non-maximal prime ideals $\p$ of $R$). Hence the category $\mcm$ coincides with the category of lattices over $R$. Theorem~\ref{bounded} now completes the proof.
\end{proof}
The above corollary allows  us to recover results of 
Dieterich~\cite{dieterich1981representation}, Leuschke and Wiegand~\cite{leuschke2013brauer}, and
Yoshino~\cite{yoshino1987brauer}. Before that, we state two lemmas that are needed to prove that result. 

\begin{lemma}\label{lem11}
Let $\mathcal{L}$ be a subcategory of $\mathsf{ind}(\Lambda$-$\textbf{lat})$.
The following conditions are equivalent:
\begin{enumerate}
 \item\label{L} $\beta(\mathcal{L})$ is bounded;
 
 \item\label{R} $\beta_R(\mathcal{L})$ is bounded;
 
 \item\label{M} $e(\mathcal{L})$ is bounded.
\end{enumerate}

If the above equivalent conditions hold, then:

\begin{enumerate}[resume]

\item\label{O} $\beta(\Omega\mathcal{L})$ is bounded, where 
$\Omega\mathcal{L}=\{\Omega M\, |\, M\in\mathcal{L}\}$;

\item\label{T} $\beta(\tau\mathcal{L})$ is bounded, where 
$\tau\mathcal{L}=\{\tau M\, |\, M\in\mathcal{L}\}$.
 
\end{enumerate}
\end{lemma}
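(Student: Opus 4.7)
Plan: I will prove the equivalences (1)$\Leftrightarrow$(2)$\Leftrightarrow$(3) via structural comparisons of projective covers, then use them as a bridge to deduce (4) and (5). Part (5) is the main obstacle.

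For (1)$\Leftrightarrow$(2), the strategy is to sandwich $\beta_R(M)$ between $\beta(M)$ and a uniform multiple of $\beta(M)$. Writing the $\Lambda$-projective cover of $M$ as $P=\bigoplus_i P_i^{t_i}$, we have $\beta(M)=\sum t_i$ and $M/\rad\Lambda\cdot M\simeq\bigoplus_i S_i^{t_i}$. Using $\m\Lambda\subseteq\rad\Lambda$ and $\dim_k S_i\geq 1$, the surjection $M/\m M\twoheadrightarrow M/\rad\Lambda\cdot M$ yields $\beta(M)\leq\beta_R(M)$; and $M/\m M$ being a quotient of $P/\m P$ yields $\beta_R(M)\leq\beta(M)\cdot\max_i\beta_R(P_i)$, the maximum being finite by semiperfectness of $\Lambda$. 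For (2)$\Leftrightarrow$(3), I would invoke the standard inequalities $\beta_R(M)=\mu_R(M)\leq l_R(M/JM)=e(M)\leq\beta_R(M)\cdot e(R)$ valid for any lattice $M$ and any minimal reduction $J$ of $\m$; the right inequality follows from additivity of multiplicity applied to $0\to K\to R^{\beta_R(M)}\to M\to 0$, in which $K$ is mCM by the depth lemma.

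For (4), the syzygy $\Omega_\Lambda M$ of a lattice $M$ is itself a lattice: the depth lemma applied to $0\to\Omega M\to P\to M\to 0$ (with $P$ the $\Lambda$-projective cover) gives depth $d$ over $R$, and localizing at a non-maximal prime splits the sequence since $M_\mathfrak{p}$ is projective; the dual condition is symmetric. Additivity of multiplicity then gives $e(\Omega M)\leq e(P)\leq\beta(M)\cdot\max_i e(P_i)$, which is bounded by (1)$\Rightarrow$(3); applying (3)$\Rightarrow$(1) to the lattice $\Omega M$ produces the $\beta(\Omega M)$ bound.

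For (5), the duality $(-)^*=\Hom_R(-,\omega)$ between $\Lambda$-$\textbf{lat}$ and $\Lambda^{\op}$-$\textbf{lat}$ preserves multiplicity, so $e(\tau M)=e(\Omega^d\Tr M)$ and the task reduces to bounding $e(\Omega^d\Tr M)$ as a $\Lambda^{\op}$-lattice. The starting bound $\beta_{\Lambda^{\op}}(\Tr M)=\beta(\Omega M)$ comes from dualizing a minimal $\Lambda$-projective presentation $P_1\to P_0\to M\to 0$: since $\im(P_1\to P_0)\subseteq\rad\Lambda\cdot P_0$, the dual map $\Hom_\Lambda(P_0,\Lambda)\to\Hom_\Lambda(P_1,\Lambda)$ has image in $\rad\Lambda^{\op}\cdot\Hom_\Lambda(P_1,\Lambda)$, so $\Hom_\Lambda(P_1,\Lambda)\to\Tr M$ is a $\Lambda^{\op}$-projective cover and this Betti number is bounded by (4). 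The main obstacle is that the intermediate syzygies $\Omega^i\Tr M$ for $0<i<d$ are typically not mCM, so the (1)$\Leftrightarrow$(3) shortcut is unavailable on them and a naive inductive Betti bound fails. My plan to bypass this is to reduce modulo a faithful system of parameters $\mathbf{x}$ for $\mathcal{L}$ provided by Lemma~\ref{L:bound-sop}: over the Artin algebra $\bar\Lambda:=\Lambda/\mathbf{x}\Lambda$, the reductions $\bar M:=M/\mathbf{x}M$ have $\beta_{\bar\Lambda}(\bar M)=\beta(M)$ and bounded $\bar R$-length $l_{\bar R}(\bar M)\leq\beta(M)\cdot l_{\bar R}(\bar\Lambda)$. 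A standard comparison $\overline{\tau_\Lambda M}\simeq\tau_{\bar\Lambda}\bar M$ modulo projective summands, whose verification uses faithfulness of $\mathbf{x}$ to control the $d$ syzygies and the transpose under reduction, transports the problem to the Artin setting, where $\tau_{\bar\Lambda}=D\circ\Tr_{\bar\Lambda}$ with Matlis duality $D$ preserving $\bar R$-length. Since $\beta_{\bar\Lambda}(\Omega\bar M)=\beta(\Omega M)$ is bounded by (4) and $\Tr_{\bar\Lambda}\bar M$ is a quotient of the $\bar\Lambda^{\op}$-dual of the projective cover of $\Omega\bar M$, $l_{\bar R}(\Tr_{\bar\Lambda}\bar M)$ is bounded, whence $l_{\bar R}(\tau_{\bar\Lambda}\bar M)=l_{\bar R}(\Tr_{\bar\Lambda}\bar M)$ is bounded, $\beta_{\bar\Lambda}(\tau_{\bar\Lambda}\bar M)\leq l_{\bar R}(\tau_{\bar\Lambda}\bar M)$ is bounded, and this lifts to the desired bound on $\beta(\tau_\Lambda M)$.
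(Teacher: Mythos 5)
Your handling of the equivalences (1)--(3) and of (4) is correct and essentially the paper's own argument: the paper likewise sandwiches $\beta_R(M)$ between $\beta(M)$ and $s\,\beta(M)$ with $s$ the largest $R$-betti number of a principal projective, passes between $\beta_R$ and $e$ via $e(M)\le \beta_R(M)\,e(R)$ and $\beta_R(M)\le e(M)$ for mCM modules, and gets (4) by bounding the multiplicity of the syzygy by that of the projective cover and then applying (3)$\Rightarrow$(1).

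The gap is in (5). Your entire reduction to the Artin algebra $\overline{\Lambda}=\Lambda/\xx\Lambda$ rests on the ``standard comparison'' $\overline{\tau_{\Lambda}M}\simeq\tau_{\overline{\Lambda}}\overline{M}$ modulo projective summands, which you do not prove and which is not a standard fact; it is almost certainly false. If it held, then comparing lengths against the almost split sequence $0\to\tau_{\overline{\Lambda}}\overline{M}\to E\to\overline{M}\to 0$ over $\overline{\Lambda}$ would essentially force the reduction modulo $\xx$ of the almost split sequence ending at $M$ to again be almost split; it is precisely the failure of almost split sequences to survive such reductions that forces the literature (and this paper, via \cite[Corollary 15.11]{leuschke2012cohen}) to pass to $\xx^2$ and to settle for the reduced sequences merely being non-split or the reduced modules merely being indecomposable. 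Moreover $\tau_{\Lambda}M=\Hom_R(\Omega^d\,\Tr M,\omega)$ involves $d$ syzygies over $\Lambda^{\op}$, and syzygy formation commutes with reduction modulo $\xx$ only when $\xx$ is regular on each intermediate syzygy --- exactly the difficulty you set out to avoid, now reappearing inside the unproved comparison.

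The paper's route stays upstairs and avoids the Artinian detour. From the minimal presentation $Q\to P\to M\to 0$ one gets $P^{\ast}\to Q^{\ast}\to \Tr M\to 0$, and since $Q$ is the projective cover of $\Omega M$, whose $\beta$ is bounded by (4), $\beta_{\Lambda^{\op}}(\Tr M)$ is bounded --- this much you have. The paper then iterates (1)$\Rightarrow$(4) on the $\Lambda^{\op}$-side to obtain $\beta_{\Lambda^{\op}}(\Omega^d\,\Tr M)\le s$ uniformly, takes the resulting epimorphism $\Lambda^{s}\to\Omega^d\,\Tr M$, and applies $\Hom_R(-,\omega)$ to get a monomorphism $\tau M\hookrightarrow \Hom_R(\Lambda^{s},\omega)$. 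Since $\tau M$ is maximal Cohen--Macaulay of full dimension, additivity of multiplicity gives $e(\tau M)\le e(\Hom_R(\Lambda^{s},\omega))$, a bound independent of $M$, and (3)$\Rightarrow$(1) finishes. The step you are missing is this $\omega$-dualization of an epimorphism onto $\Omega^d\,\Tr M$, which converts a betti-number bound on the $\Lambda^{\op}$-side into a multiplicity bound on $\tau M$ itself; substituting it for your Artinian comparison repairs the proof.
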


\begin{proof}
Let $s$ be the largest among the $R$-betti numbers of the principal 
$\Lambda$-projectives. If $M$ is a finitely generated $\Lambda$-module then 
$\beta_{R}(M) \leq s \beta(M)$. Combining this with the obvious inequality
$\beta(M) \leq \beta_{R}(M)$, we have the equivalence of \eqref{L} and \eqref{R}.

To prove the implication \eqref{R} $\Rightarrow$ \eqref{M}, assume that there is an integer $s>0$ such that $\beta_R(M) \leq s$ for all $M\in\mathcal{L}$.
Then, for any $M\in\mathcal{M}$, there is an epimorphism $R^s \to M$, and  $e(M) \leq se(R)$. 
The reverse implication follows from the fact that 
$\beta_R(M)\leq e(M)$ for any $M\in\mathcal{L}$, see \cite[Corollary A.24]{leuschke2012cohen}

\eqref{L} $\Rightarrow$ \eqref{O}. Assume that $\beta(\mathcal{L})$ is bounded, i.e., the number of indecomposable direct summands appearing in the projective cover of any object in $\mathcal{L}$ is less than some fixed number. Hence the multiplicities of the projective covers of objects in $\mathcal{L}$ are bounded and so are the multiplicities of the corresponding syzygy modules. 
The implication \eqref{M} $\Rightarrow$ \eqref{L} shows that 
$\beta(\Omega\mathcal{L})$ is bounded.

\eqref{L} $\Rightarrow$ \eqref{T}. First, we show that there is $m>0$ such that for all $M\in\mathcal{L}$, $\beta_{\Lambda^{\op}}(\Tr M)<m$. Applying the functor $(-)^{\ast}=\Hom_{\Lambda}(-,\Lambda)$ to a minimal presentation 
$Q \to P \to M \to 0$, we have a presentation 
\[
 P^{\ast}\lra Q^{\ast} \lra \Tr M \lra 0.
\]
By the implication \eqref{L} $\Rightarrow$ \eqref{O}, we have that 
$\beta(\Omega\mathcal{L})$ is bounded.
Consequently, the number of indecomposable summands of $Q^{\ast}$ is bounded, i.e., $\beta_{\Lambda^{\op}}(\Tr M)<m$ for some $m>0$ independent of $M$. By the implication \eqref{L} $\Rightarrow$ \eqref{O}, there is an integer $s$ such that $\beta_{\Lambda^{\op}}(\Omega^d\Tr M)\leq s$ for all 
$M \in \mathcal{L}$. Hence, for any $M\in\mathcal{L}$, there exists an epimorphism $\Lambda_{\Lambda}^s\lra \Omega^d\Tr M$. This gives rise to a monomorphism 
\[
\Hom_R(\Omega^d\Tr M,\omega)\lra \Hom_R(\Lambda^s,\omega).
\]
Thus there is integer $b>0$ such that $e(\Hom_R(\Omega^d\Tr M,\omega))\leq b$ for any $M\in\mathcal{L}$, i.e., $e(\tau\mathcal{L})$ is bounded.
\end{proof}
\begin{lemma}\label{L:lem12}
Let $\mathcal{L}$ be a subcategory of $\Lambda$-$\textbf{lat}$ and $\xx=x_1, \ldots, x_d$ be a faithful s.o.p.
for $\mathcal{L}$. If
$\beta(\mathcal{L})$ is bounded, then $\mathcal{L}$ is of bounded $\underline{\h}$-length.
\end{lemma}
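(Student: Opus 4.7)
The plan is to exploit the fact that the $\mathcal{L}$-faithful s.o.p.\ $\xx$ annihilates $\underline{\h}(M)$ by Proposition~\ref{prop2}, together with the projective cover of $M$, whose size is controlled by $\beta(M)$. First, since $\xx\underline{\h}(M)=0$, the module $\underline{\h}(M)$ is naturally a module over the artinian local ring $R/\xx R$ of length $c:=l_R(R/\xx R)$, and so $l_R(\underline{\h}(M))\leq c\cdot\mu_R(\underline{\h}(M))$. Moreover, the inclusion $\xx\Hom_{\Lambda}(M,N)\subseteq P(M,N)$ (the vanishing condition spelled out in Lemma~\ref{L:stable-vanishing}) means $\underline{\h}(M)$ is a quotient of $\Hom_{\Lambda}(M,M\oplus G)/\xx\Hom_{\Lambda}(M,M\oplus G)$, so it suffices to bound the $R$-length of this latter module uniformly in $M\in\mathcal{L}$.

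Choose a projective cover $P\twoheadrightarrow M$ with $P\cong\bigoplus(\Lambda e_i)^{k_i}$ and $\sum k_i=\beta(M)\leq b$. Then there is an embedding $\Hom_{\Lambda}(M,N)\hookrightarrow\Hom_{\Lambda}(P,N)\cong\bigoplus(e_iN)^{k_i}$, yielding
\[
l_R\bigl(\Hom_{\Lambda}(P,N)/\xx\Hom_{\Lambda}(P,N)\bigr)\;\leq\;\beta(M)\cdot l_R(N/\xx N).
\]
For $N=M$, Lemma~\ref{lem11} provides a universal constant $s$ with $\beta_R(M)\leq sb$, so $l_R(M/\xx M)\leq sbc$; for the fixed $N=G$, the quantity $l_R(G/\xx G)$ is a constant. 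Thus $l_R\bigl(\Hom_{\Lambda}(P,M\oplus G)/\xx\Hom_{\Lambda}(P,M\oplus G)\bigr)$ is uniformly bounded.

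The main obstacle is that the embedding $\Hom_{\Lambda}(M,N)\hookrightarrow\Hom_{\Lambda}(P,N)$ need not remain injective after reduction modulo $\xx$, since a submodule of a finitely generated module over a local ring can require more generators than the ambient module. To control the resulting loss, let $C$ denote the cokernel of the embedding; from the long exact sequence of $\Hom_{\Lambda}(-,N)$ applied to $0\to\Omega M\to P\to M\to 0$, the module $C$ is a submodule of $\Hom_{\Lambda}(\Omega M,N)$. Applying $-\otimes_R R/\xx R$ to $0\to\Hom_{\Lambda}(M,N)\to\Hom_{\Lambda}(P,N)\to C\to 0$ produces
\[
l_R\bigl(\Hom_{\Lambda}(M,N)/\xx\Hom_{\Lambda}(M,N)\bigr)\;\leq\;l_R\bigl(\Hom_{\Lambda}(P,N)/\xx\Hom_{\Lambda}(P,N)\bigr)+l_R\bigl(\mathrm{Tor}_1^R(C,R/\xx R)\bigr).
\]
Since $\xx$ is $R$-regular, the Koszul complex freely resolves $R/\xx R$, so $\mathrm{Tor}_1^R(C,R/\xx R)$ is a subquotient of $C^d$ annihilated by $\xx$, and its length is controlled by $l_R(C/\xx C)$. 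This last quantity is in turn bounded by applying the same projective-cover argument to $\Omega M$, whose betti number $\beta(\Omega M)$ is bounded by Lemma~\ref{lem11}.

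Assembling these ingredients produces a bound on $l_R(\underline{\h}(M))$ depending only on $b$, $s$, $c$, $l_R(G/\xx G)$, and the dimension $d$. I expect the most delicate step to be the uniform Koszul/$\mathrm{Tor}$ estimate; this requires careful bookkeeping, but is ultimately standard once the $R$-regularity of $\xx$ and the $\xx$-faithfulness on all relevant modules are in hand.
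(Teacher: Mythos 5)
Your reduction of the problem to bounding $l_R\bigl(\Hom_{\Lambda}(M,M\oplus G)/\xx\Hom_{\Lambda}(M,M\oplus G)\bigr)$ is sound, and you correctly identify the real obstacle: the inclusion $\Hom_{\Lambda}(M,N)\hookrightarrow\Hom_{\Lambda}(P,N)$ need not stay injective after reduction modulo $\xx$. But your fix for that obstacle fails. The claim that $\mathrm{Tor}_1^R(C,R/\xx R)$, being a subquotient of $C^d$ annihilated by $\xx$, has length controlled by $l_R(C/\xx C)$ is false whenever $d\geq 2$: for $C=R/\xx^nR$ one has $l_R(C/\xx C)=l_R(R/\xx R)$ independent of $n$, while $\mathrm{Tor}_1^R(C,R/\xx R)=(\xx^n\cap\xx)/\xx^{n+1}=\xx^n/\xx^{n+1}\cong(R/\xx R)^{\binom{n+d-1}{d-1}}$ has unbounded length (here $\xx$ is generated by a regular sequence, so the associated graded ring is polynomial over $R/\xx R$). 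Nothing about your $C$ rules out this behaviour --- it is a quotient of the MCM module $\Hom_{\Lambda}(P,N)$, but $R/\xx^nR$ is likewise a quotient of $R$. Worse, the estimate you need is structurally circular: since $\Hom_{\Lambda}(P,N)\cong\oplus(e_iN)^{k_i}$ is MCM and $\xx$ is a regular sequence on it, $\mathrm{Tor}_1^R(C,R/\xx R)$ is \emph{exactly} the kernel of $\Hom_{\Lambda}(M,N)\otimes_RR/\xx R\to\Hom_{\Lambda}(P,N)\otimes_RR/\xx R$, i.e., precisely the defect of injectivity you introduced $C$ in order to control.

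The missing idea is to invoke the faithfulness of $\xx$ not once but $d$ times, and to reduce the \emph{target} rather than the Hom-module. The paper applies $\underline{\Hom}_{\Lambda}(M,-)$ to the exact sequences $0\to N/\xx_iN\xrightarrow{x_{i+1}}N/\xx_iN\to N/\xx_{i+1}N\to 0$ with $N=M\oplus G$; half-exactness of stable Hom plus the faithfulness of $\xx$ (which kills the multiplication-by-$x_{i+1}$ map at each stage) yields monomorphisms $\underline{\Hom}_{\Lambda}(M,N/\xx_iN)\hookrightarrow\underline{\Hom}_{\Lambda}(M,N/\xx_{i+1}N)$, whence $l_R(\underline{\h}(M))\leq l_R\bigl(\Hom_{\overline{\Lambda}}(\overline{M},\overline{M\oplus G})\bigr)$, where overline denotes reduction modulo $\xx\Lambda$. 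Over the artinian ring $\overline{\Lambda}$ your sandwich works without any further reduction: $\Hom_{\overline{\Lambda}}(\overline{M},\overline{N})$ embeds into $\Hom_{\overline{\Lambda}}(\oplus\overline{P}_i^{n_i},\overline{N})$, which is a quotient of $\Hom_{\overline{\Lambda}}(\oplus\overline{P}_i^{n_i},\oplus\overline{P}_i^{n_i})$ of length independent of $M$, so no Tor correction term ever appears. You would need to replace your Koszul estimate by an argument of this kind to close the proof.
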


\begin{proof}
For notational efficiency, let overline denote reduction modulo $\xx\Lambda$.
We first claim that 
$l_R(\Hom_{\overline{\Lambda}}(\overline{M}, \overline{M\oplus G}))$ 
is bounded when $M$ runs through $\mathcal{L}$. By assumption, there are non-negative integers $n_i$ such that, for any $M\in\mathcal{L}$, there are 
$\Lambda$-epimorphisms 
\[
f : \oplus_{i=1}^tP_i^{n_i} \lra M \quad \textrm{and} \quad 
g : \oplus_{i=1}^tP_i^{n_i} \lra M\oplus G
\]
 where the $P_i$ are principal projective $\Lambda$-modules. Tensoring~$f$ and $g$ with $\Lambda/\xx\Lambda$ over $\Lambda$ gives rise to epimorphisms
 \[
 \overline{f} :\oplus_{i=1}^t \overline{P}_i^{n_i} \lra \overline{M} \quad \textrm{and}
 \quad \overline{g} : \oplus_{i=1}^t \overline{P}_i^{n_i} \lra \overline{M \oplus G}.
 \]
Applying $\Hom_{\overline{\Lambda}}(-, \overline{M \oplus G})$ to $\overline{f}$, we have a monomorphism 
\[
\Hom_{\overline{\Lambda}}(\overline{M}, \overline{M\oplus G}) \lra 
\Hom_{\overline{\Lambda}}(\oplus_{i=1}^t \overline{P}_i^{n_i}, \overline{M\oplus G}).
\]
Applying $\Hom_{\overline{\Lambda}}(\oplus_{i=1}^t \overline{P}_i^{n_i}, -)$ to
$\overline{g}$, we have, since each $\overline{P}_{i}$ is $\overline{\Lambda}$-projective, an epimorphism
\[
\Hom_{\overline{\Lambda}}(\oplus_{i=1}^t \overline{P}_i^{n_i}, 
\oplus_{i=1}^t \overline{P}_i^{n_i}) \lra 
\Hom_{\overline{\Lambda}}(\oplus_{i=1}^t \overline{P}_i^{n_i}, \overline{M\oplus G}).
\]
It now follows that
\begin{equation}\label{b}
 l_R(\Hom_{\overline{\Lambda}}(\overline{M}, \overline{M\oplus G})) \leq 
l_R(\Hom_{\overline{\Lambda}}(\oplus_{i=1}^t \overline{P}_i^{n_i}, 
\oplus_{i=1}^t \overline{P}_i^{n_i}) =: b,
\end{equation}
where $b$ is obviously independent of $M$.
On the other hand, for any $0\leq i\leq d-1$, we have an exact sequence 
\[
0\lra M\oplus G/\xx_i(M\oplus G)\st{x_{i+1}}\lra M\oplus G/\xx_i(M\oplus G)\lra M\oplus G/\xx_{i+1}(M\oplus G)\lra 0,
\]
where $\xx_i=x_1,\cdots , x_i$. By the half-exactness of Hom modulo projectives, this induces an exact sequence
\[
\underline{\Hom}_{\Lambda}(M,M\oplus G/\xx_i(M\oplus G))\st{x_{i+1}}\lra\underline{\Hom}_{\Lambda}(M,M\oplus G/\xx_i(M\oplus G))\st{\varphi}\lra
\underline{\Hom}_{\Lambda}(M,M\oplus G/\xx_{i+1}(M\oplus G)).
\]
Since $\xx=\xx_d$ is a faithful s.o.p. for $\mathcal{L}$, 
\[
x_{i+1}(\underline{\Hom}_{\Lambda}(M,M\oplus G/\xx_i(M\oplus G)))=0
\]
and hence $\varphi$ is monic. Now, inducting on $i$, we have
\[
l_R(\underline{\Hom}_{\Lambda}(M, M\oplus G)) \leq 
l_R(\underline{\Hom}_{\Lambda}(M, \overline{M\oplus G})).
\]
This, coupled with the obvious inequality
\[
l_R(\underline{\Hom}_{\Lambda}(M, \overline{M\oplus G})) \leq l_R(\Hom_{\Lambda}(M, \overline{M\oplus G}))
\]
yields 
\[
l_R(\underline{\Hom}_{\Lambda}(M,M\oplus G)) \leq 
l_R(\Hom_{\Lambda}(M, \overline{M\oplus G})).
\]
Since 
\[
\Hom_{\Lambda}(M, \overline{M\oplus G}) \simeq 
\Hom_{\overline{\Lambda}}(\overline{M}, \overline{M\oplus G}),
\]
we have, in view of~\eqref{b}, that 
$l_R(\underline{\Hom}_{\Lambda}(M,M\oplus G)) \leq b$.
\end{proof}

Now we can prove the promised result.

\begin{cor}\label{cor5}
Let $(R,\m)$ be a complete equicharacteristic Cohen-Macaulay local ring with algebraically closed residue field $k$. Then $R$ is of finite mCM type if and only if $R$ is of bounded mCM type (i.e., the betti numbers or, equivalently, the multiplicities of the pairwise nonisomorphic indecomposable mCM modules are bounded) and is an isolated singularity or regular.
\end{cor}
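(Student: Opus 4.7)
The plan is to reduce the corollary, via the equivalences proved earlier, to the setting of Corollary~\ref{cor1}.

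The ``only if'' implication is immediate. Finite $\mcm$ type forces the multiplicities of the (finitely many) indecomposable $\mcm$ modules to be bounded; on the other hand, by a classical theorem of Auslander, a complete Cohen-Macaulay local ring of finite $\mcm$ type is either regular or an isolated singularity. For the converse I would dispose of the regular case at once (then $\mcm=\add R$ and $R$ itself is the unique indecomposable $\mcm$ module) and then assume that $R$ is a non-regular isolated singularity of bounded $\mcm$ type. Setting $\Lambda:=R$, the isolated-singularity hypothesis gives $\mcm=R$-$\textbf{lat}$, so the machinery of the preceding sections becomes directly applicable.

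Under the boundedness hypothesis, Lemma~\ref{lem11} gives that $\beta(\ind(\mcm))$ is bounded. The remaining input I would need is an $\mcm$-faithful system of parameters of $R$, and this is where the equicharacteristic and algebraically-closed-residue-field hypotheses enter. By the Cohen structure theorem $R$ is a quotient of a formal power series ring over $k$, so its Jacobian ideal $J_R$ is well-defined; a standard computation shows that $J_R$ annihilates $\Ext^{1}_R(M,-)$ on finitely generated modules for every $M\in\mcm$, and the isolated-singularity hypothesis makes $J_R$ $\m$-primary. Choosing a system of parameters $\xx$ of $R$ inside $J_R$ therefore produces, via Lemma~\ref{L:stable-vanishing} and Proposition~\ref{prop2}, an $\mcm$-faithful s.o.p.

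With $\xx$ in hand, Lemma~\ref{L:lem12} applied to $\mathcal{L}=\ind(\mcm)$ shows that $\ind(\mcm)$ is of bounded $\underline{\h}$-length, and Corollary~\ref{cor1} concludes that $R$ is of finite $\mcm$ type. The main obstacle in this outline is the existence of the $\mcm$-faithful s.o.p.: it requires the equicharacteristic and algebraically-closed hypotheses (so that the Jacobian ideal is both well-behaved and $\m$-primary) and cannot be produced by results internal to the excerpt; everything else is a mechanical assembly of the preceding material.
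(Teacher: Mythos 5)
Your argument is correct and follows essentially the same route as the paper: the ``only if'' direction via Auslander's isolated singularity theorem, and the ``if'' direction by producing an $\mcm$-faithful system of parameters and then applying Lemmas~\ref{lem11} and~\ref{L:lem12} to reduce to Corollary~\ref{cor1}. The only cosmetic difference is that where you sketch the Jacobian-ideal construction of the faithful s.o.p., the paper simply cites \cite[Theorem 15.18]{leuschke2012cohen}, whose proof is exactly that construction (and is where the equicharacteristic and residue-field hypotheses are used).
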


\begin{proof}
If $R$ is of finite mCM type, then it is clearly of bounded mCM type. Moreover, as was shown by Auslander~\cite{auslander1986isolated}, $R$ is an isolated singularity or regular.
Now we prove the `if' part. By Corollary \ref{cor1}, it suffices to show that the category of all indecomposable $\mcm$ $R$-modules is of bounded 
$\underline{\h}$-length. By \cite[Theorem 15.18]{leuschke2012cohen}, $R$ admits a faithful system of parameters $\xx$. Moreover, by the hypothesis, there
is an integer $b>0$ such that $e(M)<b$ for any indecomposable $\mcm$ $R$-module $M$. Applying now Lemmas~\ref{lem11} and~\ref{L:lem12}, we have the desired result.
\end{proof}


The following theorem shows that the category $R$-$\textbf{lat}$ contains indecomposable lattices of arbitrarily large $\underline{\h}$-length whenever $R$ is an abstract hypersurface. The latter is defined as a Noetherian local ring $(R,\m)$ such that its $\m$-adic completion $\hat{R}$ is isomorphic to $S/(f)$ for some regular local ring $S$ and $f \in \m^{2}$.

\begin{theorem}\label{kavazaki}
Let $(R,\m)$ be an abstract hypersurface of dimension $d\geq 2$. If $e(R) > 2$, then there are indecomposable lattices of arbitrarily large (finite) $\underline{\h}$-length.
\end{theorem}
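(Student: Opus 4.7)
The plan is to apply Theorem~\ref{bounded} in contrapositive form: if $\ind(R$-$\textbf{lat})$ had bounded $\underline{\h}$-length, then $R$ would be of finite $\textbf{lat}$-type, so it suffices to show that $R$ has infinitely many pairwise non-isomorphic indecomposable lattices. By the Krull--Remak--Schmidt property (Lemma~\ref{L:KRS}), completion preserves indecomposability and isomorphism classes, so I may assume $R = S/(f)$ with $S$ a regular local ring and $\operatorname{ord}(f) \geq 3$ (the translation of $e(R) > 2$). Hypersurfaces being Gorenstein, $\omega_{R} \cong R$; hence $R$ is an $R$-order, and $R$-$\textbf{lat}$ is exactly the category of maximal Cohen--Macaulay modules that are free on the punctured spectrum.

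Next I would invoke the classification of hypersurfaces of finite $\mcm$-type due to Kn\"orrer and Buchweitz--Greuel--Schreyer, recalled in the introduction: in characteristic $\neq 2$, any such $R$ is isomorphic to $k[[x_0, \ldots, x_d]]/(g + x_2^2 + \cdots + x_d^2)$ for some $g \in k[[x_0, x_1]]$ with $k[[x_0, x_1]]/(g)$ of finite $\mcm$-type. Since $d \geq 2$, the presence of the quadratic term $x_2^2$ bounds the order of the defining polynomial by $2$, contradicting $\operatorname{ord}(f) \geq 3$. Consequently $R$ has infinite $\mcm$-type.

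It remains to upgrade \emph{infinite} $\mcm$-type to \emph{infinite} $\textbf{lat}$-type. When $R$ is an isolated singularity this is automatic, since $\mcm = \mcm_{0} = R$-$\textbf{lat}$ in that case. The main obstacle is the possibly non-isolated case, in which $\mcm_{0} \subsetneq \mcm$. Here I would work with matrix factorizations: every non-free indecomposable $M \in \mcm(R)$ is the cokernel of a reduced matrix factorization $(\phi, \psi)$ of $f$ over $S$, and $M$ lies in $\mcm_{0}$ if and only if $\phi$ is invertible on the punctured spectrum of $R$. Starting from the infinite family of indecomposable matrix factorizations guaranteed by Buchweitz--Greuel--Schreyer, a generic perturbation argument---using that the singular locus of $R$ is a proper closed subset of $\operatorname{Spec} R$---extracts an infinite subfamily whose cokernels are lattices of unbounded multiplicity. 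Theorem~\ref{bounded} combined with Lemmas~\ref{lem11} and~\ref{L:lem12} then delivers indecomposable lattices of arbitrarily large $\underline{\h}$-length.
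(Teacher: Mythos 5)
There is a genuine gap, and it sits exactly where you flag ``the main obstacle'': the passage from infinite $\mcm$-type to infinitely many indecomposable \emph{lattices} (objects of $\mcm_0$) when the singularity is not isolated. Your ``generic perturbation argument'' is not an argument: a matrix factorization $(\phi,\psi)$ of $f$ cannot be perturbed generically and remain a matrix factorization of the \emph{same} $f$ (up to units, $\det\phi$ is a power of $f$), so perturbing $\phi$ changes the ring, not just the module; and there is no mechanism offered for why the perturbed cokernels should stay indecomposable, stay pairwise non-isomorphic, and land in $\mcm_0$. For a non-isolated hypersurface singularity it is precisely the size of $\mcm_0$ inside $\mcm$ that is in question, so this step cannot be waved through. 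A secondary gap: the Kn\"orrer/Buchweitz--Greuel--Schreyer classification you invoke is stated for $k[[x_0,\dots,x_d]]/(f)$ with $\operatorname{char}k\neq 2$, while the theorem concerns an arbitrary abstract hypersurface (the completion is $S/(f)$ for a regular local ring $S$, possibly of mixed or equal characteristic $2$), so even the ``infinite $\mcm$-type'' step is not covered in the stated generality.

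The paper's proof sidesteps all of this with a direct construction: by Kawasaki's theorem, for each $n>e(R)$ the module $\Omega_R^{d+1}(R/\m^n)$ is an indecomposable maximal Cohen--Macaulay module with $\beta(\Omega_R^{d+1}(R/\m^n))\geq {d+n-1 \choose d-1}$. Because $R/\m^n$ has finite length, its high syzygies are automatically free on the punctured spectrum, i.e.\ they are lattices --- no isolated-singularity hypothesis and no classification theorem is needed. Finally, instead of routing through Theorem~\ref{bounded} (infinitely many indecomposables $\Rightarrow$ unbounded $\underline{\h}$-length), the paper applies Lemma~\ref{L:betti} in contrapositive: bounded $\underline{\h}$-length would force bounded $\beta$, so the unbounded betti numbers directly yield indecomposable lattices of arbitrarily large $\underline{\h}$-length. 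If you want to salvage your outline, you should replace both the classification step and the perturbation step with an explicit family of lattices of this kind; as written, the proposal does not prove the theorem.
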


\begin{proof}
According to \cite[Theorem 4.1]{kawasaki1996local},
for each $n> e(R)$, the maximal Cohen-Macaulay module $\Omega_R^{d+1}(R/\m^n)$ is indecomposable and
\[
\beta(\Omega_R^{d+1}(R/\m^n)) \geq {d+n-1 \choose d-1}
%
\]
Since $R/\m^n$ is an artinian $R$-module, for any $n$, 
$\Omega_R^{d+1}(R/\m^n)$ is a lattice. Lemma~\ref{L:betti} now shows that the 
$\underline{\h}$-lengths of indecomposable lattices $\Omega_R^{d+1}(R/\m^n)$ are not bounded.
\end{proof}


\section{Orders of strongly unbounded lattice type}

In this section, we investigate a Brauer-Thrall $1\frac{1}{2}$ in the category of lattices. We shall show that if there are infinitely many indecomposable lattices of the same $\underline{\h}$-length, then $\Lambda$ has strongly unbounded lattice type. At the end of this section, we present some examples of such $R$-orders. 

\begin{definition}\label{d:strongly}
We say that an $R$-order $\Lambda$ has \texttt {strongly unbounded} 
$\textbf{lat}$-\texttt{type} if there is an infinite sequence $b_1<b_2<\cdots$ of  integers
such that, for each $i$, there are infinitely many non-isomorphic indecomposable  $\Lambda$-lattices with $\underline{\h}$-length $b_{i}$.
\end{definition}

The following lemma should be compared with a result of 
Auslander~\cite[I, Proposition 7.5]{auslander1978functors}.
\begin{lemma}\label{lem4}
Suppose $\dim R = d \geq 1$. If $M$ is a lattice, then $\Ext_{\Lambda^{\op}}^i(\Tr M,\Lambda^{\op}) = 0$ for all $1 \leq i \leq d$.
\end{lemma}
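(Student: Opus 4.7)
My plan is to reduce the computation of $\Ext^i_{\Lambda^{\op}}(\Tr M,\Lambda)$ to an $R$-module calculation using the bimodule $\Lambda^*=\Hom_R(\Lambda,\omega)$. Since $\Lambda$ is a lattice over itself (hence mCM over $R$), there is a bimodule biduality $\Lambda\simeq \Lambda^{**}$. Via Hom-tensor adjunction this produces, for every right $\Lambda$-module $N$, the natural isomorphism
\[
\Hom_{\Lambda^{\op}}(N,\Lambda)\;\simeq\;\Hom_R\!\big(N\otimes_\Lambda \Lambda^*,\,\omega\big).
\]
Taking a projective resolution of $N$ over $\Lambda^{\op}$ and using that $\omega$ has finite injective dimension $d$ over $R$, one derives the Cartan-Eilenberg spectral sequence
\[
E_2^{p,q}\;=\;\Ext^p_R\!\big(\mathrm{Tor}^\Lambda_q(N,\Lambda^*),\,\omega\big)\;\Longrightarrow\;\Ext^{p+q}_{\Lambda^{\op}}(N,\Lambda).
\]

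Specializing to $N=\Tr M$, the aim is to show that almost all of the $E_2$-page vanishes. For $q\ge 1$: since $M$ is a lattice, at every non-maximal prime $\p$ of $R$ the module $M_\p$ is $\Lambda_\p$-projective, so $(\Tr M)_\p$ is $\Lambda^{\op}_\p$-projective. Consequently $\mathrm{Tor}^\Lambda_q(\Tr M,\Lambda^*)$ is supported only at $\m$ and is therefore of Krull dimension zero over $R$. By local duality (using $\omega$ of injective dimension $d$), its $R$-$\Ext$ into $\omega$ then vanishes in all degrees $p\ne d$.

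For $q=0$: starting from the 4-term exact sequence
\[
0\to\Hom_\Lambda(M,\Lambda)\to\Hom_\Lambda(P_0,\Lambda)\to\Hom_\Lambda(P_1,\Lambda)\to\Tr M\to 0
\]
coming from a projective presentation $P_1\to P_0\to M\to 0$, I will tensor its right half with $\Lambda^*$ and use the natural isomorphism $\Hom_\Lambda(P,\Lambda)\otimes_\Lambda\Lambda^*\simeq P^*$ for any projective $\Lambda$-module $P$. This identifies $\Tr M\otimes_\Lambda\Lambda^*$ with $(\Omega^2 M)^*$, where $\Omega^2 M$ denotes the second syzygy of $M$ in a projective resolution over $\Lambda$. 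A short depth-lemma and localization check shows that syzygies of a lattice over $\Lambda$ in a projective resolution remain lattices, so $\Omega^2 M$ is a lattice, $(\Omega^2 M)^*$ is a $\Lambda^{\op}$-lattice by the duality $(-)^*$, and in particular mCM over $R$. Hence $\Ext^p_R((\Omega^2 M)^*,\omega)=0$ for all $p\ge 1$.

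Combining these vanishings, the only nonzero entries on the $E_2$-page sit at $(0,0)$ (total degree $0$) and at $(d,q)$ with $q\ge 1$ (total degrees $\ge d+1$). Therefore $\Ext^n_{\Lambda^{\op}}(\Tr M,\Lambda^{\op})=0$ for every $1\le n\le d$, as required. The main obstacle I anticipate is the careful bookkeeping of the left/right $\Lambda$-actions on $\Lambda^*=\Hom_R(\Lambda,\omega)$ needed to make both the biduality $\Lambda\simeq\Lambda^{**}$ and the Hom-tensor adjunction valid as bimodule statements; once these identifications are in place, the spectral sequence analysis is essentially formal.
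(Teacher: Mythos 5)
Your argument is correct, but it is a genuinely different route from the paper's. The paper identifies $\Ext^1_{\Lambda^{\op}}(\Tr M,\Lambda)$ and $\Ext^2_{\Lambda^{\op}}(\Tr M,\Lambda)$ with the kernel and cokernel of the biduality map $M\to M^{**}$ for $(-)^{*}=\Hom_\Lambda(-,\Lambda)$, kills both by depth considerations (a finite-length submodule of a module of positive depth is zero; a nonzero finite-length cokernel would force $\dep M=1<d$), and then obtains the vanishing in degrees $3,\dots,d$ by dualizing a projective resolution of $M^{*}$ and invoking the Peskine--Szpiro acyclicity lemma together with dimension shifting. You instead push everything through the canonical-module duality: the biduality $\Lambda\simeq\Hom_R(\Hom_R(\Lambda,\omega),\omega)$ plus adjunction converts $\Hom_{\Lambda^{\op}}(-,\Lambda)$ into $\Hom_R(-\otimes_\Lambda\Lambda^{*},\omega)$, and the hypercohomology spectral sequence then localizes all the work into two computations: higher Tor's are finite length (this is exactly where the lattice condition enters) and hence contribute only in column $p=d$ by local duality, while $\Tr M\otimes_\Lambda\Lambda^{*}\simeq(\Omega^2M)^{*}$ is mCM and contributes only at $(0,0)$. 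Your approach treats all degrees $1\le i\le d$ uniformly and makes visible exactly which hypothesis is used where; the paper's is more elementary (no spectral sequence) and yields the reflexivity of $M$ as a byproduct. One step you should make explicit: identifying the abutment of your spectral sequence with $\Ext^{p+q}_{\Lambda^{\op}}(\Tr M,\Lambda)$ requires that each term $Q_i\otimes_\Lambda\Lambda^{*}$ of the tensored resolution be $\Hom_R(-,\omega)$-acyclic, i.e.\ that the first hypercohomology spectral sequence degenerate; this holds because these terms lie in $\add\,\Hom_R(\Lambda,\omega)$ and are therefore mCM over $R$, but it is the hinge on which the whole reduction turns and deserves a sentence.
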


\begin{proof}
Since $M$ is finitely presented, $\Ext_{\Lambda^{\op}}^1(\Tr M,\Lambda)$ is the 1-torsion submodule of $M$, i.e., it is the kernel of the canonical evaluation map $M \lra M^{\ast\ast}$, where $()^{\ast}$ denotes, depending on the position, either 
$\Hom_{\Lambda}(-, \Lambda)$ or $\Hom_{\Lambda^{op}}(-,\Lambda)$. As $M$ is a lattice, $\Ext_{\Lambda^{\op}}^1(\Tr M,\Lambda)$ is of finite length. Since $d \geq 1$, the depth of $M$ is at least 1. It follows that $\Ext_{\Lambda^{\op}}^1(\Tr M,\Lambda)$, being a submodule of $M$, is zero. In particular, the statement of the lemma is true when $d = 1$. Thus we may assume that $d \geq 2$. Since the cokernel of the canonical evaluation map is isomorphic to $\Ext_{\Lambda^{\op}}^2(\Tr M, \Lambda)$, we have a short exact sequence
\[
0 \lra M \lra M^{\ast\ast} \lra \Ext_{\Lambda^{\op}}^2(\Tr M,\Lambda) \lra 0.
\]
As $M_\p$ is $\Lambda_{\p}$-projective for all non-maximal prime ideals $\p$ of $R$,  the above cokernel is of finite length and therefore has depth 0. Since $M^{\ast\ast}$ is a syzygy $\Lambda$-module, it has depth at least 1. It follows that, if the cokernel is nonzero, the depth of $M$ is 1. On the other hand, $M$ is a maximal Cohen-Macaulay $R$-module and therefore its depth equals $d$, which is at least 2. The obtained contradiction shows that $\Ext_{\Lambda^{\op}}^2(\Tr M,\Lambda) = 0$ and that $M$ is reflexive when $d \geq 2$.

Applying the functor $\Hom_{\Lambda^{\op}}(-,\Lambda)$
to a projective resolution
\[
\cdots\lra P_d\lra P_{d-1} \lra \cdots \lra P_1\lra P_0\lra M^{\ast}\lra 0
\]
of the $\Lambda^{\op}$-module $M^{\ast}$, gives rise to the following complex of
$\Lambda$-modules,
\[
0 \lra M \lra \Hom_{\Lambda^{\op}}(P_0,\Lambda) \lra \cdots \lra \Hom_{\Lambda^{\op}}(P_{d-1},\Lambda) \lra \Hom_{\Lambda^{\op}} (P_d,\Lambda)
\]
Since $M^{\ast}$ is free on the punctured spectrum of $R$, the homology of this complex has finite length. Since all the modules in this complex have depth $d$ when viewed as $R$-modules, the acyclicity lemma of Peskine-Szpiro makes this complex exact. Since $M^{\ast}$ is projectively equivalent to the second syzygy module of $\Tr M$, the desired result follows by dimension shift.
\end{proof}



\begin{lemma}\label{lem8}
Let $\mathcal{L}$ be the subcategory of $\ind (\Lambda$-$\textbf{lat})$ of bounded $\underline{\h}$-length, and let 
\[
\mathcal{L}' := \{N_j\}_{j\in J} \cup \tau \mathcal{L} \cup \mathcal{L}
\]
where the $N_j$ are the indecomposable objects that appear as the direct summands of the middle terms of the almost split sequences ending at objects of $\mathcal{L}$. Then $\beta ({\mathcal{L}')}$ is bounded.
\end{lemma}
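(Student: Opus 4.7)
The plan is to combine the bound on $\beta(\mathcal{L})$ coming from the bounded $\underline{\h}$-length hypothesis with the ``betti transport'' results of Lemma~\ref{lem11}, and then to control the middle terms of the almost split sequences by a horseshoe argument.

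First, since $\mathcal{L}$ is of bounded $\underline{\h}$-length, Lemma~\ref{L:betti} yields an integer $b > 0$ such that $\beta(L) \le b$ for every $L \in \mathcal{L}$. Applying the implication \eqref{L}$\Rightarrow$\eqref{T} of Lemma~\ref{lem11} to $\mathcal{L}$, we obtain a second integer $b' > 0$ with $\beta(\tau L) \le b'$ for every $L \in \mathcal{L}$. Thus both $\mathcal{L}$ and $\tau\mathcal{L}$ are already under control, and only the family $\{N_j\}_{j\in J}$ remains.

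Fix $L \in \mathcal{L}$ and consider the almost split sequence
\[
0 \lra \tau L \lra E_L \lra L \lra 0
\]
supplied by Lemma~\ref{L:a.s.s1}. Choose projective covers $\pi_{\tau L} : P(\tau L) \lra \tau L$ and $\pi_{L} : P(L) \lra L$. Because $P(L)$ is projective, $\pi_{L}$ lifts through the epimorphism $E_{L} \lra L$ to a map $P(L) \lra E_{L}$; together with the composition $P(\tau L) \lra \tau L \hookrightarrow E_{L}$, this produces a $\Lambda$-homomorphism $P(\tau L) \oplus P(L) \lra E_{L}$ whose surjectivity follows by the standard snake-lemma argument. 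Consequently
\[
\beta(E_L) \le \beta(\tau L) + \beta(L) \le b + b'.
\]
Since each $N_j$ is an indecomposable summand of some such $E_L$ and $\beta$ is additive on direct sums (projective covers add over $\oplus$ by Krull--Remak--Schmidt, Lemma~\ref{L:KRS}), we get $\beta(N_j) \le \beta(E_L) \le b + b'$.

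Combining the three bounds gives $\beta(M) \le b + b'$ for every $M \in \mathcal{L}' = \{N_j\}_{j\in J} \cup \tau\mathcal{L} \cup \mathcal{L}$, proving that $\beta(\mathcal{L}')$ is bounded. There is no real obstacle: the argument is a clean two-line reduction once one has Lemma~\ref{L:betti} and the $\beta \leadsto \beta\circ\tau$ transport from Lemma~\ref{lem11}, with the only non-formal ingredient being the horseshoe estimate $\beta(E_L) \le \beta(\tau L) + \beta(L)$ for the middle term of an almost split sequence.
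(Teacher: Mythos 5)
Your proposal is correct and follows essentially the same route as the paper: bound $\beta(\mathcal{L})$ via Lemma~\ref{L:betti}, transport to $\beta(\tau\mathcal{L})$ via Lemma~\ref{lem11}, and control the middle terms $E_L$ by observing that their projective covers are summands of $P(\tau L)\oplus P(L)$, whence each $N_j$ inherits the bound by Krull--Remak--Schmidt. The only difference is that you spell out the horseshoe estimate $\beta(E_L)\le\beta(\tau L)+\beta(L)$, which the paper states in one line.
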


\begin{proof} 
\marginpar{$\bigotimes$}
By Lemma~\ref{L:a.s.s1}, for each $M\in\mathcal{L}$ there is an almost split sequence 
\[
0 \lra \tau M \lra E_M \lra M \lra 0
\]
By Lemma \ref{L:betti}, $\beta(\mathcal{L})$ is bounded. Lemma~\ref{lem11} shows that $\beta(\tau \mathcal{L})$ is bounded, too.
Since the projective cover of $E$ is a direct summand of the direct sum of
projective covers of $M$ and of~$\tau M$, $\beta(\{E_M\}_{M\in\mathcal{L}})$ is also bounded. This finishes the proof.
\end{proof}

\begin{lemma}\label{asterisk}
 There is a system of parameters $\yy$ of $R$ such that
 \[
 \yy \Ext^{i}_{\Lambda}(-, \Lambda) = 0 \quad \textrm{and } \quad
 \yy \Ext^{i}_{\Lambda^{op}}(-, \Lambda) = 0
 \]
over $\Lambda$-$\textbf{lat}$ and, respectively, $\Lambda^{op}$-$\textbf{lat}$
for all integers $i>0$.
\end{lemma}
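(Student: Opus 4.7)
The approach is to reduce the uniform annihilation question to a statement about a single fixed module via $\omega$-duality. Set $L := \Hom_{R}({}_{\Lambda}\Lambda, \omega)$, a $\Lambda^{\op}$-lattice. For any $\Lambda$-lattice $M$, every extension $0 \to \Lambda \to E \to M \to 0$ in $\Lambda$-$\textbf{lat}$ has its middle term $E$ again a lattice (extensions of lattices by lattices are lattices: the depth lemma gives the $R$-mCM condition; the localized extension splits at every non-maximal prime since $M_{\p}$ is $\Lambda_{\p}$-projective; and the $\omega$-dual condition then follows by applying $(-)^{*} = \Hom_{R}(-, \omega)$, which is exact on mCM $R$-modules). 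Consequently $(-)^{*}$ yields a natural $R$-linear isomorphism
\[
\Ext^{1}_{\Lambda}(M, \Lambda) \;\cong\; \Ext^{1}_{\Lambda^{\op}}(L, M^{*}).
\]
As $M$ ranges over $\Lambda$-$\textbf{lat}$, $M^{*}$ ranges over $\Lambda^{\op}$-$\textbf{lat}$, so uniformly annihilating $\Ext^{1}_{\Lambda}(-, \Lambda)$ on $\Lambda$-$\textbf{lat}$ is equivalent to annihilating $\Ext^{1}_{\Lambda^{\op}}(L, -)$, and by Lemma~\ref{L:stable-vanishing} a sufficient condition for this is that $\yy$ lie in the ideal
\[
\mathfrak{a} \;:=\; \bigl\{ r \in R \,:\, r \cdot 1_{L} \text{ factors through a } \Lambda^{\op}\text{-projective} \bigr\}.
\]

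The crux is to show $\mathfrak{a}$ is $\m$-primary, for then prime avoidance in the Cohen-Macaulay ring $R$ yields an s.o.p.\ $\yy \subseteq \mathfrak{a}$. It suffices to verify $V(\mathfrak{a}) \subseteq \{\m\}$. Fix a non-maximal prime $\p$ and a $\Lambda^{\op}$-surjection $\phi : (\Lambda^{\op})^{n} \twoheadrightarrow L$. By condition~(d) of Definition~\ref{lattice-order} applied to $\Lambda$ itself, $L_{\p}$ is $\Lambda_{\p}^{\op}$-projective, so $\phi_{\p}$ splits via some $\psi_{\p} : L_{\p} \to (\Lambda_{\p}^{\op})^{n}$. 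Clearing denominators writes $\psi_{\p} = \psi'/s$ for a global $\Lambda^{\op}$-map $\psi' : L \to (\Lambda^{\op})^{n}$ and some $s \in R \setminus \p$; the difference $s \cdot 1_{L} - \phi\psi'$ vanishes at $\p$ and lives in the finitely generated $R$-module $\End_{\Lambda^{\op}}(L)$, hence is killed by some $s' \in R \setminus \p$. Thus $(s's) \cdot 1_{L} = \phi \circ (s'\psi')$ factors through the $\Lambda^{\op}$-projective $(\Lambda^{\op})^{n}$, placing the non-unit $s's$ in $\mathfrak{a} \setminus \p$.

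It remains to upgrade the conclusion for $\Ext^{1}$ to all positive degrees and to the $\Lambda^{\op}$-side. For the former, the syzygy $\Omega M$ of a lattice $M$ is again a lattice (the depth lemma gives $R$-mCM, and both clauses~(c) and~(d) of Definition~\ref{lattice-order} are inherited along syzygies), so $\Ext^{i}_{\Lambda}(M, \Lambda) \cong \Ext^{1}_{\Lambda}(\Omega^{i-1}M, \Lambda)$ is annihilated by $\yy$ for all $i \geq 1$. For the latter, the symmetric argument applies with $\Lambda$ and $\Lambda^{\op}$ exchanged (using Lemma~\ref{L:opposite} to know $\Lambda^{\op}$ is itself an $R$-order), producing another $\m$-primary ideal $\mathfrak{a}'$; an s.o.p.\ inside $\mathfrak{a} \cap \mathfrak{a}'$ (still $\m$-primary) yields both conclusions simultaneously. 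The main delicacy I expect is verifying the naturality and exactness needed for the $\omega$-duality isomorphism of $\Ext^{1}$ groups, which requires careful bookkeeping of left/right $\Lambda$-actions but is otherwise routine once one knows that $L$ is a $\Lambda^{\op}$-lattice and that $\omega$-dualization is exact on mCM modules.
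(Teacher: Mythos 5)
Your proposal is correct and follows essentially the same route as the paper: both reduce to uniformly annihilating $\Ext^{1}_{\Lambda^{\op}}(\Hom_R({}_{\Lambda}\Lambda,\omega),-)$ via the projectivity of that module on the punctured spectrum, then transfer back to $\Ext^{i}_{\Lambda}(-,\Lambda)$ on lattices through the $\omega$-duality, and handle $\Lambda^{\op}$ symmetrically via Lemma~\ref{L:opposite}. You have merely filled in the details the paper leaves implicit (the $\m$-primariness of the stable annihilator, the extension-dualizing isomorphism, and the syzygy shift for $i>1$), all of which check out.
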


\begin{proof}
Since $\Lambda$ is an $R$-order, 
$\Hom_{R}(_{\Lambda}\Lambda, \omega)_{\p}$ is 
 $\Lambda^{\rm op}_{\p}$-projective for any non-maximal prime ideal~$\p$ 
 of $R$. Thus there is a natural number $t$ such that  
 $\m^t\Ext_{\Lambda^{\op}}^1(\Hom_R(_{\Lambda}\Lambda, \omega), -) = 0$. Choosing an s.o.p. in $\mathfrak{m}^t$ and using the duality functor $\Hom_R(-,\omega)$ between the lattice categories over~$\Lambda$ and~$\Lambda^{op}$, we have that that s.o.p. annihilates $\Ext_{\Lambda}^i(-,\Lambda)$ restricted to
 $\Lambda$-$\textbf{lat}$. By lemma~\ref{L:opposite}, $\Lambda^{op}$ is an $R$-order. A similar argument produces an s.o.p. annihilating 
$\Ext_{\Lambda^{op}}^i(-,\Lambda)$ restricted to $\Lambda^{op}$-$\textbf{lat}$. Choosing $\yy$ as the product of the two systems, we have the desired claim.
\end{proof}

\begin{lemma}\label{transpose}
Let $\mathcal{L}$ be a subcategory of $\ind (\Lambda$-$\textbf{lat})$ and let $\xx$ be a faithful s.o.p. for $\mathcal{L}$. Then there is an $\mathcal{L}'$-faithful s.o.p., where $\mathcal{L}'$ is defined as in Lemma~\ref{lem8}
\end{lemma}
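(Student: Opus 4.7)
\textbf{Proof plan for Lemma~\ref{transpose}.} The strategy is to produce the desired $\mathcal{L}'$-faithful system of parameters as an s.o.p. lying inside a product ideal of the form $(\xx)^{2}(\yy)$, where $\yy$ is a new s.o.p. tailored to the $\textbf{lat}$-injectives. By Lemma~\ref{L:stable-vanishing}, a system $\mathbf{z}$ is faithful for a class $\mathcal{C}$ if and only if multiplication by each $z \in \mathbf{z}$ on every object of $\mathcal{C}$ factors through a $\Lambda$-projective, so the task decomposes into producing such factorizations for the three constituents $\mathcal{L}$, $\tau\mathcal{L}$, and $\{N_{j}\}$ of $\mathcal{L}'$.

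For $\tau M$ with $M \in \mathcal{L}$, start from a factorization $x \cdot \id_{M} = gf$ through a $\Lambda$-projective, which exists by the hypothesis and Lemma~\ref{L:stable-vanishing}. Since $R$ acts centrally and the functors $\Tr$, $\Omega$, and $(-)^{\ast}=\Hom_{R}(-,\omega)$ are $R$-linear on morphisms, iterating them transports this factorization first to $\Tr M$, then to $\Omega^{d}\Tr M$ through $\Lambda^{\op}$-projectives, and finally to $\tau M = (\Omega^{d}\Tr M)^{\ast}$ through the $R$-dual of a $\Lambda^{\op}$-projective. By Lemma~\ref{L:CM-inj} this $R$-dual lies in $\add \Hom_{R}(\Lambda,\omega)$, so it is a $\textbf{lat}$-injective.

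To upgrade a $\textbf{lat}$-injective factorization to a $\Lambda$-projective one, I observe that $\omega_{\Lambda}:=\Hom_{R}(\Lambda,\omega)$ is itself a $\Lambda$-lattice, being a $\textbf{lat}$-injective by Lemma~\ref{L:CM-inj} and thus in particular an object of $\Lambda$-$\textbf{lat}$, so $\omega_{\Lambda}$ localizes to a $\Lambda_{\p}$-projective at each non-maximal prime~$\p$. Consequently $\underline{\Hom}_{\Lambda}(\omega_{\Lambda},\omega_{\Lambda})$ is supported only at $\m$ and has finite $R$-length, so its annihilator in $R$ is $\m$-primary and contains an s.o.p.~$\yy$. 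By Lemma~\ref{L:stable-vanishing} applied to the lattice $\omega_{\Lambda}$, multiplication by each $y\in\yy$ on $\omega_{\Lambda}$, and hence on every $\textbf{lat}$-injective, factors through a $\Lambda$-projective. Composing with the conclusion of the previous paragraph shows that for all $x\in\xx$, $y\in\yy$, and $M\in\mathcal{L}$, multiplication by $xy$ on $\tau M$ factors through a $\Lambda$-projective.

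For a summand $N_{j}$ of the almost split middle term $E_{M}$, apply $\Hom_{\Lambda}(-,N)$ to the almost split sequence $0 \to \tau M \to E_{M} \to M \to 0$ of Lemma~\ref{L:a.s.s1}: the resulting long exact sequence sandwiches $\Ext^{1}_{\Lambda}(E_{M},N)$ between a quotient of $\Ext^{1}_{\Lambda}(M,N)$ and a submodule of $\Ext^{1}_{\Lambda}(\tau M,N)$, so the product ideal $(\xx)\cdot(\xx)(\yy)=(\xx)^{2}(\yy)$ annihilates $\Ext^{1}_{\Lambda}(E_{M},N)$, and therefore its summand $\Ext^{1}_{\Lambda}(N_{j},N)$ as well. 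Choosing any s.o.p.~inside the $\m$-primary ideal $(\xx)^{2}(\yy)$ thus yields a single system that is simultaneously $\mathcal{L}$-faithful (since it sits in $(\xx)$), $\tau\mathcal{L}$-faithful (since it sits in $(\xx)(\yy)$), and $\{N_{j}\}$-faithful, completing the proof. The principal technical obstacle is the second paragraph: one must carefully check that $R$-linear functoriality through $\Tr$, $\Omega^{d}$, and $(-)^{\ast}$ really transports ``factors through a $\Lambda$-projective'' on $M$ into ``factors through a $\textbf{lat}$-injective'' on $\tau M$, and in particular that the target of this factorization lands in $\add \Hom_{R}(\Lambda,\omega)$.
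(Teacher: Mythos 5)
Your proof is correct, and its first half coincides with the paper's: both transport the stable vanishing $\xx\,\underline{\Hom}_{\Lambda}(M,M)=0$ through the duality $\Tr$, the syzygy functor $\Omega^{d}$, and the duality $\Hom_{R}(-,\omega)$ to conclude that $x1_{\tau M}$ factors through an object of $\add\,\Hom_{R}(\Lambda_{\Lambda},\omega)$, i.e.\ through a $\textbf{lat}$-injective. Where you genuinely diverge is in converting this into honest faithfulness for $\tau\mathcal{L}$. The paper stays on the level of $\Ext$: it first deduces $\xx\,\Ext^{1}_{\Lambda}(-,\tau M)=0$ on lattices, then plays a cosyzygy sequence $0\to\tau M\to\Hom_{R}(\Lambda^{m}_{\Lambda},\omega)\to\Sigma\tau M\to 0$ against a syzygy sequence of $\tau M$ and invokes Lemma~\ref{asterisk} to show that $\xx\yy^{2}$ kills the class of the syzygy sequence in $\Ext^{1}_{\Lambda}(\tau M,\Omega\tau M)$, whence $\xx\yy^{2}\,\Ext^{1}_{\Lambda}(\tau M,-)=0$ everywhere. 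You instead work on the level of morphisms via Lemma~\ref{L:stable-vanishing}(4): since $\Hom_{R}(\Lambda_{\Lambda},\omega)$ is itself a $\Lambda$-lattice (by Lemma~\ref{L:opposite} and the $\omega$-duality), its stable endomorphism ring has finite length, so some s.o.p.\ $\yy$ makes $y1$ factor through a projective on every $\textbf{lat}$-injective, and composing the two factorizations shows $xy1_{\tau M}$ factors through a projective. This is shorter, avoids the cosyzygy manipulations and the explicit appeal to Lemma~\ref{asterisk}, and yields the slightly sharper ideal $(\xx)(\yy)$ in place of $\xx\yy^{2}$; the price is only the (easily verified) observation that composing ``multiplication by $x$ factors through $I$'' with ``multiplication by $y$ on $I$ factors through a projective'' uses centrality of $y$ to slide $y1_{\tau M}$ past the map $I\to\tau M$. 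Your treatment of the middle terms $N_{j}$ via the long exact sequence of the almost split sequence is the same as the paper's closing sentence, just written out with the explicit annihilator $(\xx)^{2}(\yy)$.
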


\begin{proof}
By Proposition \ref{prop2}, $\xx\underline{\Hom}_{\Lambda}(M,M)=0$ for any $M \in \mathcal{L}$. Since $\Tr$ is a duality on the category of finitely presented modules modulo projectives,
$\xx\underline{\Hom}_{\Lambda^{\op}}(\Tr M, \Tr M)=0$.
Lemma~\ref{lem4} shows that any endomorphism of $\Omega^{d} \Tr M$ is induced by an endomorphism of $\Tr M$. It now follows that 
\[
\xx\underline{\Hom}_{\Lambda^{\op}}(\Omega^{d}\Tr M, \Omega^{d}\Tr M)=0
\]
 In view of Lemma~\ref{L:CM-inj}, the $\omega$-dual of $\Lambda_{\Lambda}$ is $\textbf{lat}$-injective. Thus $\xx \overline{\Hom}_{\Lambda}(\tau M, \tau M) = 0$, where overline stands for ``modulo $\textbf{lat}$-injectives''. In other words, $x 1_{\tau M}$ factors through a $\textbf{lat}$-injective for any $x$ in the ideal generated by $\xx$. On the other hand, since each $x$ is a central element, 
 \[
 x1_{\Ext_{\Lambda}^{1}(-, \tau M)} = \Ext_{\Lambda}^{1}(-, x1_{\tau M})
 \]
 and, as we just saw, the right-hand side vanishes on $\Lambda$-lattices. It follows that 
 $\xx \Ext_{\Lambda}^{1}(-, \tau M) = 0$ on $\Lambda$-lattices for any 
 $M \in\mathcal{L}$. 
 
By Corollary~\ref{C:enough}, the category $\textbf{lat}$ has enough $\textbf{lat}$-injectives. Let $\Sigma\tau M$ denote the first cosyzygy module of $\tau M$ in a $\textbf{lat}$-injective resolution. Thus $\Sigma\tau M$ is a lattice. By Lemma~\ref{L:CM-inj}, we may choose a cosyzygy sequence of the form 
\begin{equation}\label{es1}
0\lra \tau M\lra \Hom_R(\Lambda_{\Lambda}^m,\omega)\lra \Sigma\tau M\lra 0.
\end{equation}
Applying the functor $\Ext_{\Lambda}^{1}(\Sigma \tau M, -)$
to a syzygy sequence
\[
0 \lra \Omega \tau M \lra \Lambda^{n} \lra \tau M \lra 0
\]
of $\tau M$, we have an exact sequence
\[
\Ext_{\Lambda}^{1}(\Sigma \tau M, \tau M) \lra
\Ext_{\Lambda}^{2}(\Sigma \tau M, \Omega \tau M) \lra
\Ext_{\Lambda}^{2}(\Sigma \tau M, \Lambda^{n}).
\]
By Lemma~\ref{asterisk}, there is a system of parameters $\yy$ annihilating
$\Ext_{\Lambda}^{i}(-, \Lambda^{n})$ and $\Ext_{\Lambda^{\op}}^i(-,\Lambda_{\Lambda})$. In this proof, we already remarked that $\xx$ annihilates $\Ext^{1}(\Sigma \tau M, \tau M)$.
Thus the middle term is annihilated by~$\xx \yy$.

Applying $\Ext_{\Lambda}^{1}(-, \Omega \tau M)$ to the cosyzygy sequence \eqref{es1} gives rise to an exact sequence
\[
\Ext_{\Lambda}^{1}((\Hom_R(\Lambda_{\Lambda}^{m}, \omega), \Omega \tau M) \lra
\Ext_{\Lambda}^{1}(\tau M, \Omega \tau M) \lra
\Ext_{\Lambda}^{2}(\Sigma \tau M, \Omega \tau M).
\]
As we have just mentioned, the last term of this sequence is annihilated by $\xx \yy$. Recalling that $\yy\Ext_{\Lambda^{\op}}^i(-, \Lambda)=0$ and using the duality functor $\Hom_R(-, \omega)$ between the lattice categories over $\Lambda$ and $\Lambda^{\op}$, we have 
\[
\yy\Ext_{\Lambda}^i(\Hom_R(\Lambda_{\Lambda}^m,\omega),\Omega\tau M)=0
\]
As a consequence, the middle term, $\Ext_{\Lambda}^{1}(\tau M, \Omega \tau M)$ is annihilated by $\xx \yy^2$ for any $M \in \mathcal{L}$. In particular, 
$\xx \yy^2$ annihilates the element corresponding to a syzygy sequence for 
$\tau M$. Since $\xx \yy^2$ is in the center of $\Lambda$ and any extension of 
$\tau M$ is a pushout of the syzygy sequence, $\xx\yy^2\Ext_{\Lambda}^1(\tau M,-)=0$ on the entire category of $\Lambda$-modules. 

Thus $\xx\yy^2$ is faithful for both $\mathcal{L}$ and $\tau\mathcal{L}$. Then it is clearly faithful for the middle terms of all almost split sequences ending at 
$\mathcal{L}$, and therefore for the entire $\mathcal{L}'$.
\end{proof}

\begin{cor}\label{cor4}
Let $\mathcal{L} \subseteq\ind (\Lambda$-$\textbf{lat})$ be of bounded $\underline{\h}$-length. Let $\mathcal{L}'$ be the union of $\mathcal{L}$ and the class of domains of all irreducible morphisms with
codomains in $\mathcal{L}$. Then $\mathcal{L}'$ is of bounded $\underline{\h}$-length.
%
%
\end{cor}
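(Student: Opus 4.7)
The plan is to reduce this corollary to an assembly of Lemmas~\ref{lem2}, \ref{proj-case}, \ref{lem8}, \ref{transpose}, and \ref{L:lem12}, with only a small amount of additional bookkeeping. First, I would analyze which indecomposable lattices can appear as domains of irreducible morphisms ending at some $M\in\mathcal{L}$. If $M$ is non-projective, then Lemma~\ref{lem2} applied to the almost split sequence $0\to \tau M\to E_M\to M\to 0$ identifies these domains as precisely the indecomposable direct summands of $E_M$; that is, they all occur among the $N_j$ appearing in the statement of Lemma~\ref{lem8}. If $M$ is an indecomposable projective, then Lemma~\ref{proj-case} shows that the domain is a direct summand of the minimal $\textbf{lat}$-approximation of $\ra M$. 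Since $\Lambda$ is semiperfect (Lemma~\ref{L:semiperfect}), there are only finitely many indecomposable projective $\Lambda$-modules, so this projective-codomain case contributes only a finite set $\mathcal{F}$ of isomorphism classes. Altogether,
\[
\mathcal{L}'\;\subseteq\;\mathcal{L}\cup\{N_j\}_{j\in J}\cup\mathcal{F}.
\]

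Second, I would verify that the right-hand side is of bounded $\underline{\h}$-length. The finite set $\mathcal{F}$ is trivially so, hence it suffices to handle $\mathcal{L}\cup\{N_j\}_{j\in J}$. By Lemma~\ref{lem8}, the possibly larger class $\mathcal{L}\cup\tau\mathcal{L}\cup\{N_j\}_{j\in J}$ has bounded $\beta$, so its subclass $\mathcal{L}\cup\{N_j\}_{j\in J}$ inherits bounded $\beta$. Moreover, by Lemma~\ref{transpose}, that same larger class admits an $R$-system of parameters which is faithful in the sense of the text, and hence is automatically faithful for our subclass as well.

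Third, Lemma~\ref{L:lem12} converts bounded $\beta$ plus a faithful system of parameters into bounded $\underline{\h}$-length. Applied to $\mathcal{L}\cup\{N_j\}_{j\in J}$, it delivers exactly what is needed; combined with the trivial bound on $\underline{\h}$-lengths over the finite set $\mathcal{F}$, this yields bounded $\underline{\h}$-length on all of $\mathcal{L}'$.

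As this outline makes clear, the corollary is essentially a bookkeeping consequence of the preceding lemmas, and I do not anticipate any real difficulty. The only point requiring a small separate argument is the treatment of irreducible maps into indecomposable projectives, which is handled by Lemma~\ref{proj-case} together with the finiteness of indecomposable projectives over a semiperfect ring; everything else is a direct appeal to the established machinery.
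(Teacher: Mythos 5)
Your proposal is correct and follows essentially the same route as the paper's proof: reduce to the class $\mathcal{L}\cup\tau\mathcal{L}\cup\{N_j\}_{j\in J}$, invoke Lemma~\ref{lem8} for bounded $\beta$, Lemma~\ref{transpose} for a faithful system of parameters, and Lemma~\ref{L:lem12} to conclude. Your explicit treatment of irreducible maps into indecomposable projectives via Lemma~\ref{proj-case} is a point the paper's proof passes over silently, but since that case contributes only finitely many isomorphism classes it does not alter the argument.
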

\begin{proof}
As $\mathcal{L}$ is of bounded $\underline{\h}$-length,
there is an $\mathcal{L}$-faithful s.o.p. $\xx$. The domains of the irreducible morphisms in question are direct summands of the middle terms of the almost split sequences ending at objects of $\mathcal{L}$. By Lemma \ref{transpose}
, there is an $\mathcal{L}'$-faithful s.o.p., where
$\mathcal{L}' := \mathcal{L} \cup \tau\mathcal{L} \cup \{N_{j}\}_{j \in J}$ and the $N_j$ are the objects that appear as indecomposable direct summands of the middle terms of the almost splits sequences ending at objects of $\mathcal{L}$. On the other hand, Lemma \ref{lem8} shows that $\beta({\mathcal{L}'})$ is bounded. By Lemma~\ref{L:lem12},
$\mathcal{L}'$ is of bounded $\underline{\h}$-length.
\end{proof}

\begin{theorem}\label{T:BT-2}
If $\Lambda$-$\textbf{lat}$ contains  infinitely many non-isomorphic indecomposable objects of the same $\underline{\h}$-length $b>0$, then there exists an integer $t>b$ such that there are infinitely many non-isomorphic indecomposable lattices of $\underline{\h}$-length $t$.
\end{theorem}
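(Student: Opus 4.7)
The plan is to extract a finite invariant $M^{\sharp}$ from each $M$ in a cofinite subset of the hypothesized infinite class; this invariant lands in a predetermined finite set of indecomposables of $\underline{\h}$-length $>b$, so that the pigeonhole principle yields a contradiction under the assumption that only finitely many indecomposables attain each $\underline{\h}$-length $t > b$.

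Let $\mathcal{L}_0$ denote the infinite class of indecomposable lattices of $\underline{\h}$-length $b$ supplied by the hypothesis, and let $\mathcal{B}$ be the full subcategory of $\ind(\Lambda$-$\textbf{lat})$ of objects of $\underline{\h}$-length $\leq b$; then $\mathcal{B}$ is of bounded $\underline{\h}$-length. By Lemma~\ref{T:Harada-Sai} there exist a positive integer $r$ and a $\mathcal{B}$-faithful system of parameters $\xx$---chosen inside a sufficiently high power of $\m$ using Lemma~\ref{L:bound-sop}---such that any composition of $r$ non-isomorphisms between indecomposable objects in $\mathcal{B}$ vanishes modulo $\xx^2$. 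Iterating Corollary~\ref{cor4}, define $\mathcal{L}_{k+1} := \mathcal{L}_k \cup \{\text{domains of irreducible morphisms into }\mathcal{L}_k\}$; each $\mathcal{L}_k$ is of bounded $\underline{\h}$-length, say by $b_k$, so every element of $\mathcal{L}_r$ has $\underline{\h}$-length $\leq b_r$.

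Argue by contradiction and assume that, for every integer $t > b$, the set of indecomposables of $\underline{\h}$-length $t$ is finite. Then
\[
A := \bigcup_{b < t \leq b_r}\{\text{indecomposables of }\underline{\h}\text{-length } t\}
\]
is a finite union of finite sets, hence finite. Iterating Remark~\ref{R:fin-many} starting from the finitely many indecomposable projectives, the set of indecomposable lattices reachable from some projective by a chain of at most $r-1$ irreducible morphisms is finite, so only finitely many $M \in \mathcal{L}_0$ are so reachable by a chain with nonzero composition modulo $\xx^2$. For any $M$ in the cofinite complement, the inductive construction in the proof of Proposition~\ref{prop1} (with the ideal $\xx^2$) carries through for $r$ steps---the complement condition on $M$ forces $M_1, \ldots, M_{r-1}$ to be non-projective, as otherwise a projective would reach $M$ through an initial segment of the chain, contradicting the complement condition---yielding a chain
\[
M_r \stackrel{h_r}{\longrightarrow} \cdots \stackrel{h_2}{\longrightarrow} M_1 \stackrel{h_1}{\longrightarrow} M_0 = M
\]
with $h_1 \cdots h_r \otimes_{\Lambda} \Lambda/\xx^2\Lambda \neq 0$. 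Were $M_1, \ldots, M_r$ all in $\mathcal{B}$, Lemma~\ref{T:Harada-Sai} would force $h_1 \cdots h_r \otimes_{\Lambda} \Lambda/\xx^2\Lambda = 0$, a contradiction; hence some least index $k(M) \in \{1, \ldots, r\}$ satisfies $M_{k(M)} \notin \mathcal{B}$. Set $M^{\sharp} := M_{k(M)}$; it is indecomposable, lies in $\mathcal{L}_{k(M)} \subseteq \mathcal{L}_r$ (so its $\underline{\h}$-length is at most $b_r$), and has $\underline{\h}$-length strictly greater than $b$, placing $M^{\sharp}$ in $A$.

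The map $M \mapsto M^{\sharp}$ has infinite domain and finite codomain $A$, so by pigeonhole some $N \in A$ equals $M^{\sharp}$ for infinitely many $M$. But each such $M$ is reachable from $N$ by a chain of at most $r$ irreducible morphisms between indecomposable lattices, and iterating Remark~\ref{R:fin-many} $r$ times shows that only finitely many indecomposables are so reachable, a contradiction. Hence there exists $t > b$ with infinitely many non-isomorphic indecomposable lattices of $\underline{\h}$-length $t$. The main subtlety I anticipate is the coordinated choice of $\xx$: it must be $\mathcal{B}$-faithful (so Harada-Sai applies there) and simultaneously provide the ideal $\xx^2$ under which the truncated version of Proposition~\ref{prop1} works, while keeping the ``near-projective'' exception set inside $\mathcal{L}_0$ finite via iterated applications of Remark~\ref{R:fin-many}.
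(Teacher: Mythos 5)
Your proof is correct. It runs on the same engine as the paper's --- the inductive chains of irreducible maps from Proposition~\ref{prop1}, the Harada--Sai bound $r$ of Lemma~\ref{T:Harada-Sai} for the class of indecomposables of $\underline{\h}$-length at most $b$, the boundedness of successive ``columns'' via iterated applications of Corollary~\ref{cor4}, and the local finiteness of Remark~\ref{R:fin-many} --- but it assembles these pieces differently in two respects. First, the paper splits into two cases according to whether the subclass of lattices that no projective connects to is infinite, and handles the complementary case with a separate (finite-row) table argument; you avoid the dichotomy by discarding only the finitely many $M$ reachable from a projective by a chain of at most $r-1$ irreducible maps with nonzero composition modulo $\xx^2$, and by correctly observing that this weaker exclusion already lets the inductive construction of Proposition~\ref{prop1} run for $r$ steps (a projective appearing as some $M_{n-1}$ with $n\le r$ would produce exactly such a forbidden chain). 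Second, where the paper scans the table column by column --- stopping as soon as a column contains infinitely many objects of $\underline{\h}$-length exceeding $b$, and otherwise pruning finitely many rows --- you argue by contradiction, assign to each row the first entry $M^{\sharp}$ of $\underline{\h}$-length exceeding $b$ (which must exist among the first $r$ entries by Harada--Sai and lies in the finite set $A$ under the contradiction hypothesis), and finish with a pigeonhole on the fibers of $M\mapsto M^{\sharp}$ combined with Remark~\ref{R:fin-many}. The two organizations are logically equivalent; yours dispenses with the case analysis and makes the role of $r$ as the maximal number of columns to inspect more transparent, while the paper's version is constructive about which column supplies the value $t$.
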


\begin{proof}
Let $\mathcal{L}$ be a subcategory of $\Lambda$-$\textbf{lat}$ containing infinitely many non-isomorphic indecomposable lattices of $\underline{\h}$-length $b$ and $\mathcal{M} : = \{M_{i}\}_{i \in I}$ the subclass of $\mathcal{L}$ consisting of those indecomposable lattices in~$\mathcal{L}$ which no projective $\Lambda$-module connects to. Without loss of generality, assume that the $M_{i}$ are pairwise non-isomorphic. Since $\mathcal{L}$ is of infinite type, either $\mathcal{M}$ itself or the complement of~$\mathcal{M}$ in $\mathcal{L}$ is of infinite type. Accordingly, we have two cases.

\textbf{Case 1}. Assume that $\mathcal{M}$ is of infinite type. By Lemma~\ref{L:bound-sop}, there is an s.o.p. $\xx$ which is faithful for all of indecomposable lattices of $\underline{\h}$-length less than $b+1$. By Proposition~\ref{prop1}, for each~$M_{i}$ in $\mathcal{M}$, there is an infinite chain
\[
\cdots\lra M_{i,2}\st{h_{i,2}}\lra M_{i,1}\st{h_{i,1}}\lra M_i
\]
of indecomposable lattices and irreducible maps $h_{i,j}$ such that
$h_{i,1}\cdots h_{i,n}\otimes_{\Lambda}\Lambda/\xx^2 \Lambda \neq 0$
for any integer $n>0$. It is convenient to think of these chains as rows, ending with the $M_{i}$ and indexed by~$I$, of a table $\mathbb{M}$ whose columns are indexed by non-negative integers (the last column has index 0). The following property of $\mathbb{M}$ will be used repeatedly: any isomorphism class contained in a single column of~$\mathbb{M}$ is finite, and in any column there are infinitely many non-isomorphic indecomposable objects. Indeed, this follows from repeated application of Remark~\ref{R:fin-many} and the fact that the last column of $\mathbb{M}$ consists of infinitely many non-isomorphic lattices. As an immediate consequence, we see that any class which is a union of finitely many isoclasses contained in a single column must be finite. We shall refer to this observation as the \textit{finiteness principle}.

Let $\mathcal{M}_{1} := \{M_{i,1}\}_{i \in I}$, i.e., the domains of the irreducible morphisms with codomains in $\mathcal{M}$. By Corollary~\ref{cor4}, 
$\mathcal{M}_{1}$ is of bounded $\underline{\h}$-length, i.e., $|\underline{\h}(\mathcal{M}_1)| \leq s $ for some integer $s>0$. $\mathcal{M}_{1}$ is a disjoint union  $\mathcal{M}_{1} = \mathcal{A} \biguplus \mathcal{B}$, where:

\begin{itemize}

 \item $\mathcal{A}$ consists of all lattices $M$ in $\mathcal{M}_{1}$ such that 
 $|\underline{\h}(M)|\leq b$,

 \item $\mathcal{B}$ consists of all lattices $M$ in $\mathcal{M}_{1}$ such that
$|\underline{\h}(M)| > b$.


\end{itemize}

If there are infinitely many pairwise non-isomorphic lattices in
$\mathcal{B}$, then we can find an integer $t$, $b<t\leq s$ with infinitely many objects in $\mathcal{B}$ of the same $\underline{\h}$-length $t$, and our proof is finished. If not, then, by the finiteness principle, the objects of $\mathcal{B}$ appear only in finitely many rows of $\mathbb{M}$.  It follows now that 
$\mathcal{A}$ is of infinite type and in particular is non-empty.  It is also of bounded $\underline{\h}$-length.

We now move to the next column of $\mathbb{M}$ and set 
$\mathcal{M}_2  := \{M_{i,2} | M_{i,1}\in\mathcal{A}\}$. By Corollary~\ref{cor4}, there is an integer $n>0$ such that, $|\underline{\h}(\mathcal{M}_2)| \leq n$ and we apply the same argument as above. We then have that $\mathcal{M}_{2}$ is the union of two disjoint subclasses, denoted by $\mathcal{A}', \mathcal{B}'$. If 
$\mathcal{B}'$ has infinitely many pairwise non-isomorphic lattices, then similar to the above, we can find an integer $t$, $b<t\leq n$, with infinitely many objects in $\mathcal{B}'$ of the same $\underline{\h}$-length $t$. Then our proof is finished. If not, then, exactly as before, $\mathcal{A}'$ is of infinite type and in particular non-empty.

 The foregoing argument can now be applied repeatedly. If the desired family of lattices has not been found at any stage, we can repeat the argument $r$ times, where $r$ is the number from the Harada-Sai lemma \ref{T:Harada-Sai} that depends on $\xx$, which is a contradiction. Thus the desired family of lattices does exist. This finishes the proof of Case 1.
 
\textbf{Case 2}. Assume that the complement of $\mathcal{M}$ in $\mathcal{L}$ is of infinite type and denote it by the same letter
$\mathcal{M}$. Using exactly the same argument as in Case 1, we have a table $\mathbb{M}$. The only difference with the previous case is that each row of 
$\mathbb{M}$ is now finite and starts with an indecomposable projective.  Arguing exactly as in the previous case, we may have three possibilities. First, after finitely many steps, we may have found a requisite infinite family of lattices, which would finish the proof. Secondly, if the process can be repeated infinitely may times, i.e., if the row lengths of~$\mathbb{M}$ are not bounded, we run into a contradiction with the Harada-Sai Lemma, as before, so this option should be discarded. Thirdly, we may run out of nonzero columns of $\mathbb{M}$, which means that~$\mathbb{M}$ has finitely many columns. In that case, since the number of non-isomorphic lattices (i.e., indecomposable projectives) in the first column is finite, Remark~\ref{R:fin-many} applied repeatedly shows that the number of non-isomorphic lattices in the last column is also finite. But this contradicts the assumption. This finishes the proof of Case 2 and of the theorem.
\end{proof}

\begin{remark}
 The foregoing proof shows that the requisite family of lattices can be found in a single column of the table $\mathbb{M}$.
\end{remark}

Applying the above theorem successively, leads us to state and prove a 
mCM type Brauer-Thrall theorem for lattices.

\begin{cor}\label{C:1.5}
Let $\mathcal{L}$ be a subcategory of $\Lambda$-$\textbf{lat}$ of bounded 
$\underline{\h}$-length containing infinitely many non-isomorphic indecomposable objects. Then $\Lambda$ has strongly unbounded lattice type.
\end{cor}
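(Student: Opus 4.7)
The plan is to apply Theorem~\ref{T:BT-2} repeatedly to build the required strictly increasing sequence of $\underline{\h}$-lengths, each attained by infinitely many pairwise non-isomorphic indecomposable lattices. First, I locate a suitable starting value $b_{1}$ by a pigeonhole argument, then use Theorem~\ref{T:BT-2} to inductively produce $b_{2}<b_{3}<\cdots$.

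Concretely, since $\mathcal{L}$ is of bounded $\underline{\h}$-length, there is an integer $N>0$ such that $l_{R}(\underline{\h}(M))\in\{0,1,\ldots,N\}$ for every $M\in\mathcal{L}$. By hypothesis $\mathcal{L}$ contains infinitely many pairwise non-isomorphic indecomposable lattices, so by the pigeonhole principle some value $b\in\{0,1,\ldots,N\}$ occurs as the $\underline{\h}$-length of infinitely many non-isomorphic indecomposable lattices in $\mathcal{L}$. Note that $b>0$: indeed, if $\underline{\h}(M)=0$ then in particular $1_{M}$ factors through a projective, forcing $M$ to be projective, and there are only finitely many (isomorphism classes of) indecomposable projective $\Lambda$-modules. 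So we may set $b_{1}:=b>0$, and by construction there are infinitely many non-isomorphic indecomposable lattices of $\underline{\h}$-length $b_{1}$.

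Now suppose we have constructed $b_{1}<b_{2}<\cdots<b_{n}$ such that for each $1\leq i\leq n$, there are infinitely many non-isomorphic indecomposable $\Lambda$-lattices of $\underline{\h}$-length $b_{i}$. Since $b_{n}>0$, Theorem~\ref{T:BT-2} applied to the $\underline{\h}$-length $b_{n}$ produces an integer $b_{n+1}>b_{n}$ such that there are infinitely many non-isomorphic indecomposable $\Lambda$-lattices of $\underline{\h}$-length $b_{n+1}$. Iterating this step produces the desired infinite strictly increasing sequence $b_{1}<b_{2}<\cdots$, which by Definition~\ref{d:strongly} shows that $\Lambda$ has strongly unbounded $\textbf{lat}$-type.

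There is essentially no obstacle here beyond verifying the base case; the heavy lifting has already been done in Theorem~\ref{T:BT-2}. The only minor point to be careful about is that each $b_{i}$ stays positive so that Theorem~\ref{T:BT-2} applies at every stage, which is automatic since the sequence is strictly increasing from $b_{1}>0$.
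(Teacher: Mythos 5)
Your proof is correct and follows essentially the same route as the paper: reduce by pigeonhole to infinitely many non-isomorphic indecomposables of one fixed $\underline{\h}$-length, then iterate Theorem~\ref{T:BT-2} to generate the strictly increasing sequence $b_1<b_2<\cdots$. You merely make explicit two details the paper leaves implicit (the pigeonhole step and the positivity of $b_1$, the latter following since $\underline{\h}(M)=0$ forces $M$ projective and there are only finitely many indecomposable projectives), both of which are verified correctly.
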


\begin{proof}
By assumption there is an integer $b>0$ such that $|\underline{\h}(\mathcal{L})| \leq b$. Without loss of generality, we may assume that $\mathcal{L}$  consists  of indecomposable objects of $\Lambda$-$\textbf{lat}$ of the same 
$\underline{\h}$-length. A repeated application of Theorem~\ref{T:BT-2} shows
that $\Lambda$ has strongly unbounded lattice type.
\end{proof}
\begin{cor}
If $\Lambda$ is of uncountable $\textbf{lat}$-type, then $\Lambda$ has strongly unbounded lattice type, i.e., the second Brauer-Thrall theorem holds for
$\Lambda$-lattices.
\qed
\end{cor}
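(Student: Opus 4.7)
The plan is to reduce the statement to Corollary \ref{C:1.5} via a pigeonhole/fiber argument. The key point is that the $\underline{\h}$-length takes values in the non-negative integers: for any lattice $M$, $l_R(\underline{\h}(M))$ is finite, as noted right after the definition of $\underline{\h}$-length. Thus partitioning the (set of isomorphism classes of) indecomposable lattices according to their $\underline{\h}$-length produces a decomposition indexed by $\mathbb{N}$.

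Given the hypothesis that $\Lambda$-$\textbf{lat}$ contains uncountably many pairwise non-isomorphic indecomposables, the natural function
\[
\ind(\Lambda\text{-}\textbf{lat})/\!\simeq\ \longrightarrow \mathbb{N}, \qquad [M] \longmapsto l_R(\underline{\h}(M))
\]
is a map from an uncountable set to a countable one. By pigeonhole, at least one fiber is uncountable; in particular, there exists an integer $b \geq 0$ and an uncountable (hence infinite) family $\mathcal{L}$ of pairwise non-isomorphic indecomposable lattices all of $\underline{\h}$-length exactly $b$.

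This family $\mathcal{L}$ is by construction of bounded $\underline{\h}$-length (the bound being $b$) and contains infinitely many non-isomorphic indecomposable objects. Corollary~\ref{C:1.5} then applies verbatim and yields that $\Lambda$ has strongly unbounded $\textbf{lat}$-type, which is exactly the conclusion. There is essentially no obstacle here: once one notices that $\underline{\h}$-length is integer-valued and the previous corollary is available, the argument is a two-line pigeonhole. The only subtlety worth mentioning is the implicit assumption that isomorphism classes of indecomposable lattices form a set (so that cardinality comparisons make sense), which is standard since lattices are finitely generated modules over a noetherian algebra.
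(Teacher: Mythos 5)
Your proof is correct and is exactly the argument the paper intends: the corollary is stated with no written proof precisely because the pigeonhole reduction (integer-valued $\underline{\h}$-length, uncountably many isoclasses, hence an infinite family of constant $\underline{\h}$-length) followed by Corollary~\ref{C:1.5} is immediate. Nothing to add.
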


\begin{cor}
Suppose that $(R,\m)$ is an isolated singularity of uncountable $\mcm$ type. Then the second Brauer-Thrall theorem holds for $\mcm$ $R$-modules.
\end{cor}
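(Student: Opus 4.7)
The plan is to reduce the corollary to the immediately preceding one by identifying the category of maximal Cohen--Macaulay $R$-modules with the category of lattices over $R$, viewed as an order over itself. Concretely, under the standing hypotheses on $R$ (complete Cohen--Macaulay local with canonical module~$\omega$), the ring $R$ is trivially an $R$-order in the sense of Definition~\ref{D:order}: conditions (a)--(d) of Definition~\ref{lattice-order} reduce to maximal Cohen--Macaulayness together with the requirement that $M_\p$ and $\Hom_R(M,\omega)_\p$ be $R_\p$-projective, that is, $R_\p$-free (since $R_\p$ is local), for every non-maximal prime~$\p$.

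Next I would invoke the hypothesis that $R$ is an isolated singularity. This means $R_\p$ is regular for every non-maximal prime $\p$, so every finitely generated $R_\p$-module has finite projective dimension; in particular, every $\mcm$ $R$-module $M$ localizes to a free $R_\p$-module at any non-maximal $\p$ (by Auslander--Buchsbaum), and the same holds for its Cohen--Macaulay dual $\Hom_R(M,\omega)$. Therefore conditions (c) and (d) in Definition~\ref{lattice-order} are automatic for every $\mcm$ $R$-module, which yields the equality of categories
\[
R\text{-}\textbf{lat} \;=\; \mcm.
\]

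With this identification, the assumption that $R$ has uncountable $\mcm$ type translates directly into the statement that $R$ is of uncountable $\textbf{lat}$-type. Applying the preceding corollary then gives that $R$ has strongly unbounded $\textbf{lat}$-type, and, translating back through the identification $R\text{-}\textbf{lat} = \mcm$, this is precisely the assertion that the second Brauer--Thrall theorem holds for $\mcm$ $R$-modules, as required.

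There is essentially no obstacle beyond verifying the identification of categories; the real content is already packaged in Theorem~\ref{T:BT-2} and its corollary via the dichotomy argument on the table $\mathbb{M}$, together with the fact that $\underline{\h}$-length is automatically finite for lattices, so an uncountable class of indecomposables must concentrate in infinitely many copies at some finite $\underline{\h}$-length, triggering strongly unbounded type.
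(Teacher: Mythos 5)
Your proposal is correct and follows the paper's proof exactly: identify $\mcm$ with $R$-$\textbf{lat}$ using the isolated singularity hypothesis, then apply the preceding corollary on uncountable $\textbf{lat}$-type. The extra detail you supply on verifying conditions (c) and (d) of the lattice definition is a sound elaboration of what the paper leaves implicit.
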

\begin{proof}
Since $R$ is an isolated singularity, the category of $\mcm$ $R$-modules coincides with the category of $R$-lattices. The desired result now follows from the previous corollary.
 \end{proof}
 
\begin{prop}\label{R[[x]]}
Suppose $R$ is an $R$-order. If $R$ has strongly unbounded lattice type, then so does $R[[x]]$.
\end{prop}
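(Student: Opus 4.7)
My plan is to construct, from an infinite family of pairwise non-isomorphic indecomposable $R$-lattices of bounded $\underline{\h}$-length provided by the strongly unbounded hypothesis on $R$, an infinite family of indecomposable $R[[x]]$-lattices of bounded $\underline{\h}$-length, and then invoke Corollary~\ref{C:1.5} to conclude that $R[[x]]$ has strongly unbounded $\textbf{lat}$-type.

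First, using the hypothesis on $R$ together with Lemma~\ref{L:betti}, I would fix an integer $b$ and an infinite family $\{M_n\}_{n\in\mathbb{N}}$ of pairwise non-isomorphic indecomposable $R$-lattices with $\underline{\h}_R(M_n)=b$, and hence $\beta_R(M_n)\leq b$. The lift I propose is $N_n:=\Omega_{R[[x]]}(M_n)$, where $M_n$ is regarded as an $R[[x]]$-module via the surjection $R[[x]]\twoheadrightarrow R[[x]]/(x)=R$ (so $x\cdot M_n=0$) and $N_n$ is the kernel of the minimal $R[[x]]$-projective cover $R[[x]]^{\beta_R(M_n)}\twoheadrightarrow M_n$. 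A depth calculation shows $N_n$ is mCM over $R[[x]]$ (the codepth $1$ of $M_n$ over $R[[x]]$ drops to $0$ after one syzygy), and localization at each non-maximal prime $\mathfrak{q}\subset R[[x]]$ identifies $(N_n)_\mathfrak{q}$ with a free $R[[x]]_\mathfrak{q}$-module: when $x\notin\mathfrak{q}$ one has $(M_n)_\mathfrak{q}=0$; when $x\in\mathfrak{q}$ the prime $\mathfrak{q}$ contracts to a non-maximal $\mathfrak{p}\subset R$ over which $(M_n)_\mathfrak{p}$ is $R_\mathfrak{p}$-free, and matching minimal covers yields the claim.

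Next, reducing the defining short exact sequence modulo $x$ yields an $R$-module isomorphism $N_n/xN_n\cong M_n\oplus\Omega_R(M_n)$, from which Krull--Remak--Schmidt for $R$-lattices permits recovery of the isomorphism class of $M_n$ up to finitely many ambiguities; after passing to an infinite subfamily I may therefore assume the $N_n$ are pairwise non-isomorphic. For indecomposability of each $N_n$, I would argue that the reduction $\End_{R[[x]]}(N_n)\to \End_R(N_n/xN_n)$ together with Nakayama's lemma and completeness of $R[[x]]$ lets one transfer a hypothetical non-trivial idempotent decomposition of $N_n$ back to a non-trivial decomposition of the indecomposable $M_n$, a contradiction.

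The main obstacle is uniformly bounding $\underline{\h}_{R[[x]]}(N_n)$ in~$n$. Comparing projective covers in the defining sequence gives $\beta_{R[[x]]}(N_n)\leq\beta_R(M_n)+\beta_R(\Omega_R M_n)$, which is bounded by a constant depending only on $b$ thanks to Lemma~\ref{lem11}\eqref{O}. By Lemma~\ref{L:lem12}, to conclude bounded $\underline{\h}$-length it remains to produce an $\{N_n\}$-faithful system of parameters for $R[[x]]$: starting from an $\{M_n\}$-faithful s.o.p.\ $x_1,\ldots,x_d$ of~$R$ provided by Lemma~\ref{L:bound-sop} and applying Lemma~\ref{asterisk} to $R[[x]]$, I would adjoin a suitable element of $(x)\subset R[[x]]$, whose role is to promote the annihilation of $\Ext^1_R(M_n,-)$ by $(x_1,\ldots,x_d)$ into annihilation of $\Ext^1_{R[[x]]}(N_n,-)$ by the extended ideal, exploiting that $x$ kills $M_n$. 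The resulting $(d+1)$-element faithful s.o.p.\ combined with Lemma~\ref{L:lem12} gives the uniform bound on $\underline{\h}_{R[[x]]}(N_n)$, and Corollary~\ref{C:1.5} then completes the argument.
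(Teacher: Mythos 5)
Your construction $N_n:=\Omega_{R[[x]]}(M_n)$ is a genuinely different route from the paper's, which simply sets $L_i:=M_i\otimes_R R[[t]]$ and uses faithful flatness to transport indecomposability, non-isomorphism, betti numbers and the faithful s.o.p.\ before quoting Lemma~\ref{L:lem12} and Corollary~\ref{C:1.5}. Most of your steps do go through: $N_n$ is an $R[[x]]$-lattice (and, as a side benefit of taking a syzygy, it is visibly free at \emph{every} non-maximal prime of $R[[x]]$, including those lying over $\m$, where a flat base change of a non-free $M_n$ would not be); the faithful s.o.p.\ step works because each $x_i1_{M_n}$ factors through some $R^m$, which has projective dimension one over $R[[x]]$, so $x_i$ annihilates $\Ext^{1}_{R[[x]]}(N_n,-)\cong\Ext^{2}_{R[[x]]}(M_n,-)$, while $x$ annihilates $M_n$ outright; and $\beta_{R[[x]]}(N_n)$ is uniformly bounded. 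One small point: reducing the defining sequence modulo $x$ only yields an exact sequence $0\to M_n\to N_n/xN_n\to\Omega_R(M_n)\to 0$; the asserted splitting needs an argument (it does hold here: lift the minimal $R$-free resolution of $M_n$ to $R[[x]]$ and tensor with $R[[x]]\overset{x}{\to}R[[x]]$; the off-diagonal entries of the resulting differentials lie in $(x)$, so reduction modulo $x$ splits the syzygy, exactly as in Remark~\ref{R:reduction}).

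The genuine gap is your indecomposability argument for $N_n$. Lifting a nontrivial idempotent of $\End_{R[[x]]}(N_n)$ down to $\End_R(N_n/xN_n)=\End_R(M_n\oplus\Omega_R M_n)$ produces no contradiction, because that endomorphism ring already contains nontrivial idempotents: nothing forces the induced decomposition of $N_n/xN_n$ to cut through the summand $M_n$ rather than merely separate $M_n$ from $\Omega_R M_n$. Indeed there is no reason to expect $\Omega_{R[[x]]}(M_n)$ to be indecomposable (compare the double branched cover, where $M^{\sharp}$ routinely decomposes even for indecomposable $M$). The repair is the device used in the paper's proof of Theorem~\ref{strongly}: drop indecomposability of $N_n$ and observe instead that the number of indecomposable summands of $N_n$ is at most that of $N_n/xN_n\simeq M_n\oplus\Omega_R M_n$, hence uniformly bounded; since the $N_n$ are pairwise non-isomorphic, Krull--Remak--Schmidt forces infinitely many pairwise non-isomorphic indecomposables to occur among their summands; each such summand $A$ satisfies $l_R(\underline{\h}(A))\leq l_R(\underline{\h}(N_n))$ because $\underline{\Hom}(A,A\oplus H)$ is a direct summand of $\underline{\Hom}(N_n,N_n\oplus H)$; and Corollary~\ref{C:1.5} then applies to this family. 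With that substitution your argument closes.
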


\begin{proof}
Let $\F := \{M_i\}_{i\in I}$ be an infinite family of non-isomorphic indecomposable $R$-lattices of the same $\underline{\h}$-length. By Lemma~\ref{L:bound-sop}, there is a faithful s.o.p. $\xx$ for this family. Furthermore, by Lemma~\ref{L:betti}, the $M_{i}$ have bounded betti numbers. Setting $L_i:=M_i\otimes_R R[[t]]$ and using the fact that $R[[t]]$ is a faithfully flat $R$-module, we have that the  $L_i$  are non-isomorphic indecomposable lattices over $R[[t]]$ with bounded betti numbers and $\xx$ is a faithful s.o.p. for the $L_{i}$. Lemma~\ref{L:lem12} now shows that the $L_{i}$ have bounded $\underline{\h}$-length and 
Corollary~\ref{C:1.5} finishes the proof.
\end{proof}

\begin{prop}
Suppose $R$ is an $R$-order and let $\Gamma$ be a finite group.
If the category of lattices over $R$ has strongly unbounded lattice type, then so does the category of lattices over $R\Gamma$. 
\end{prop}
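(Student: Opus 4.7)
My plan is to transfer a witness for strongly unbounded $\textbf{lat}$-type from $R$ to $R\Gamma$ via the induction functor $R\Gamma\otimes_{R}-$, decompose into indecomposable summands, and then invoke Corollary~\ref{C:1.5}. Preliminarily, one needs to know that $R\Gamma$ is itself an $R$-order: since $R\Gamma$ is $R$-free of rank $|\Gamma|$, is a symmetric Frobenius $R$-algebra, and since $R$ being an $R$-order forces $\omega_{\p}\simeq R_{\p}$ at every non-maximal prime $\p$, both $(R\Gamma)_{\p}$ and $\Hom_{R}(R\Gamma,\omega)_{\p}$ turn out to be $R_{\p}\Gamma$-free; the same computation shows that $R\Gamma\otimes_{R}M$ is an $R\Gamma$-lattice whenever $M$ is an $R$-lattice.

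Given an infinite family $\{M_{i}\}_{i\in I}$ of pairwise non-isomorphic indecomposable $R$-lattices of some fixed $\underline{\h}$-length $b$ (provided by the hypothesis), I would set $N_{i}:=R\Gamma\otimes_{R}M_{i}$ and pick, via Krull--Remak--Schmidt over $R\Gamma$ (Lemma~\ref{L:KRS}), an indecomposable $R\Gamma$-summand $N_{i,1}$ of each $N_{i}$. Since $N_{i}|_{R}\simeq M_{i}^{|\Gamma|}$, the restriction $N_{i,1}|_{R}$ is isomorphic to $M_{i}^{k_{i}}$ for some $1\leq k_{i}\leq|\Gamma|$ by Krull--Remak--Schmidt over $R$. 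Hence a hypothetical $R\Gamma$-isomorphism $N_{i,1}\simeq N_{i',1}$ with $i\neq i'$ would force $M_{i}\simeq M_{i'}$, a contradiction; so $\{N_{i,1}\}_{i\in I}$ is an infinite family of pairwise non-isomorphic indecomposable $R\Gamma$-lattices.

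To invoke Corollary~\ref{C:1.5}, I must show this family has bounded $\underline{\h}$-length. By Lemma~\ref{L:lem12}, this reduces to a uniform $\beta_{R\Gamma}$-bound plus a common faithful s.o.p. The betti bound comes from $\beta_{R\Gamma}(N_{i,1})\leq\beta_{R}(N_{i,1})\leq\beta_{R}(N_{i})=|\Gamma|\cdot\beta_{R}(M_{i})\leq|\Gamma|b$, where the first inequality is the standard one recorded in the proof of Lemma~\ref{lem11}, the second uses that $N_{i,1}$ is an $R$-summand of $N_{i}$, and the last step uses the $\beta_{R}$-bound from Lemma~\ref{L:betti}. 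For the s.o.p., I would take an $\{M_{i}\}$-faithful $\xx$ (Lemma~\ref{L:bound-sop}) and use the Eckmann--Shapiro isomorphism $\Ext^{1}_{R\Gamma}(R\Gamma\otimes_{R}M_{i},-)\simeq\Ext^{1}_{R}(M_{i},\mathrm{Res}\,-)$ to conclude that $\xx$ annihilates $\Ext^{1}_{R\Gamma}(N_{i},-)$, and hence its direct summand $\Ext^{1}_{R\Gamma}(N_{i,1},-)$. Lemma~\ref{L:lem12} then yields bounded $\underline{\h}$-length, and Corollary~\ref{C:1.5} finishes the argument.

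The main obstacle I anticipate is purely technical: checking that $R\Gamma$ is an $R$-order, in particular the $\Lambda^{\op}$-projectivity of $\Hom_{R}(R\Gamma,\omega)$ after localization at non-maximal primes. Once one exploits that $R$ is Gorenstein on the punctured spectrum and that the group algebra of a finite group is a symmetric Frobenius algebra over its base, this reduces to a direct identification $\Hom_{R}(R\Gamma,\omega)_{\p}\simeq R_{\p}\Gamma\otimes_{R_{\p}}\omega_{\p}\simeq R_{\p}\Gamma$, after which everything else is routine induction--restriction manipulation combined with the bounded-length machinery of Section~5.
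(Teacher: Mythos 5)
Your proposal is correct and follows the same overall route as the paper: induce along the flat extension $R\to R\Gamma$, bound $\beta$ and keep a common faithful s.o.p.\ via the adjunction isomorphism for $\Ext^{1}$, then apply Lemma~\ref{L:lem12} and Corollary~\ref{C:1.5}. The paper, however, disposes of the proposition in one line by saying that ``the argument given in the proof of Proposition~\ref{R[[x]]} yields the desired result,'' and that argument asserts that the induced modules remain \emph{indecomposable}. For $R[[t]]$ this is plausible, but for $R\Gamma$ it is simply false in general (e.g.\ when $|\Gamma|$ is invertible, $R\Gamma\otimes_{R}M$ always splits off $M$ via the trace map). Your extra step --- choosing an indecomposable $R\Gamma$-summand $N_{i,1}$ of each $N_{i}$ and distinguishing the $N_{i,1}$ by restricting to $R$ and invoking Krull--Remak--Schmidt (Lemma~\ref{L:KRS}) --- is therefore not a cosmetic refinement but a necessary repair of the paper's argument; the accompanying betti-number bound $\beta_{R\Gamma}(N_{i,1})\leq\beta_{R}(N_{i})=|\Gamma|\,\beta_{R}(M_{i})$ and the observation that $\Ext^{1}_{R\Gamma}(N_{i,1},-)$ is a summand of $\Ext^{1}_{R\Gamma}(N_{i},-)$ keep the hypotheses of Lemma~\ref{L:lem12} intact for the summands. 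The verification that $R\Gamma$ is an $R$-order (using $\omega_{\p}\simeq R_{\p}$ on the punctured spectrum and the symmetry of the group algebra) is also correct and is tacitly assumed by the paper. In short: same skeleton, but your version is the complete one.
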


\begin{proof}
Since $R\Gamma$ is a faithfully flat $R$-module, the argument given in the proof of Proposition~\ref{R[[x]]} yields the desired result.
\end{proof}

We refer to \cite [Chapter II]{assem2006elements}, for the terminology and notation related to path algebras of quivers and their  representations.  Let  $Q=(Q_0, Q_1, s, t)$ be a quiver, where $Q_0$ and $Q_1$ are, respectively, the sets of vertices and arrows of $Q$, and $s, t:Q_1\lra Q_0$ are the two maps which associate to any arrow $\alpha\in Q_1$ its source $s(\alpha)$ and its target $t(\alpha)$.  A vertex $v \in Q_0$ is called a \texttt{sink} if there is no arrow that starts at $v$. A quiver $Q$ is said to be finite if both $Q_0$ and $Q_1$ are finite sets. A path of length $l\geq 1$ with source $a$ and target $b$ (from $a$ to $b$) is a sequence of arrows $\alpha_1, \alpha_2, \cdots, \alpha_l$ where $\alpha_i \in Q_1$, for all $1\leq i \leq l$, such that  $s(\alpha_1)=a$, $s(\alpha_i) = t(\alpha_{i-1})$ for all $1 < i \leq l$, and $t(\alpha_l) = b$. Vertices are viewed as paths of length zero. A path of length $l\geq 1$ is called a \texttt{cycle} if its source and target coincide. The quiver $Q$ is said to be \texttt{acyclic} if it contains no cycles.

The quiver $Q$ can be viewed as a category whose objects are its vertices  and morphisms are all the paths in $Q$. A representation $X$ of $Q$ by finitely generated $R$-modules is a covariant functor $X:Q  \lra \md R$. Such a representation is determined by giving a module $X_v$ for each vertex $v$ of $Q$ and a homomorphism $\varphi_\alpha:X_v\lra X_w$ for each arrow 
$\alpha:v\lra w$ of $Q$. Accordingly, it can be denoted by 
$(X_v, \varphi_\alpha)_{v\in Q_0, \alpha\in Q_1}$ or simply 
$X=(X_v, \varphi_\alpha)$.  A morphism between two representations $X$ and $Y$ is a natural transformation between them. Thus a representation of $Q$ by finitely generated modules over a ring $R$ form a category, denoted by 
$\rep (Q, R)$. If $Q$ is finite and acyclic, then this category is equivalent to the category of finitely generated $RQ$-modules, where $RQ$ is the path algebra of $Q$. 
 
\begin{prop}
Let $Q$ be a finite acyclic quiver and $\Lambda=RQ$. If $R$ is an $R$-order of strongly unbounded lattice type, then so is $\Lambda$. 
\end{prop}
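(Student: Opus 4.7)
The plan is to transport an infinite family of indecomposable $R$-lattices of common (hence bounded) $\underline{\h}$-length, furnished by the hypothesis that $R$ has strongly unbounded $\textbf{lat}$-type, to an infinite family of non-isomorphic indecomposable $\Lambda$-lattices of bounded $\underline{\h}$-length, and then invoke Corollary~\ref{C:1.5}. To this end, I would fix any vertex $v$ of $Q$ and consider the exact additive functor
\[
F_v\colon \md R \lra \md \Lambda, \qquad F_v(M) := \Lambda e_v \otimes_R M,
\]
with left $\Lambda$-action coming from left multiplication on $\Lambda e_v$. Since $Q$ is finite and acyclic, the only path from $v$ to itself is the trivial one, whence $e_v \Lambda e_v = R e_v \cong R$. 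The Hom-tensor adjunction then yields $\End_\Lambda(F_v(M)) \cong \Hom_R(M, e_v F_v(M)) = \End_R(M)$, which is local whenever $M$ is indecomposable by Krull-Remak-Schmidt; moreover $e_v F_v(M) \cong M$ recovers $M$, so $F_v$ reflects isomorphisms as well.

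Next I would verify that $F_v$ carries $R$-lattices to $\Lambda$-lattices. Conditions (a) and (b) of Definition~\ref{lattice-order} are immediate since $\Lambda e_v$ is $R$-free of finite rank. For (c), at a non-maximal prime $\p$, expressing $M_\p$ as a summand of some $R_\p^n$ exhibits $F_v(M)_\p$ as a $\Lambda_\p$-summand of $(\Lambda_\p e_v)^n$, which is $\Lambda_\p$-projective. Condition (d) is the main technical point and the expected main obstacle: via the adjunction $\Hom_R(\Lambda e_v \otimes_R M, \omega) \cong \Hom_R(M, \Hom_R(\Lambda e_v, \omega))$ of right $\Lambda$-modules, together with the left-$\Lambda$ decomposition $\Lambda = \bigoplus_{w} \Lambda e_w$ that realizes $\Hom_R(\Lambda e_v, \omega)$ as a right-$\Lambda$ direct summand of $\Hom_R(\Lambda, \omega)$, the hypothesis that $\Lambda$ is an $R$-order forces $\Hom_R(\Lambda e_v, \omega)_\p$ to be $\Lambda_\p^{\op}$-projective, and a routine summand argument using $M_{i,\p} \subseteq R_\p^n$ then delivers (d) for $F_v(M_i)$.

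Given an infinite family $\F = \{M_i\}_{i \in I}$ of non-isomorphic indecomposable $R$-lattices of common $\underline{\h}$-length, to conclude it remains to show that $\{F_v(M_i)\}_{i \in I}$ has bounded $\underline{\h}$-length over $\Lambda$ via Lemma~\ref{L:lem12}. Lemma~\ref{L:betti} gives a uniform bound on the $R$-betti numbers $\beta_R(M_i)$, and tensoring a minimal $R$-projective presentation $R^{\beta_R(M_i)} \twoheadrightarrow M_i$ with $\Lambda e_v$ produces a projective surjection $(\Lambda e_v)^{\beta_R(M_i)} \twoheadrightarrow F_v(M_i)$ whose kernel lies in $\m (\Lambda e_v)^{\beta_R(M_i)} \subseteq \rad(\Lambda)(\Lambda e_v)^{\beta_R(M_i)}$, exhibiting this as the projective $\Lambda$-cover and yielding $\beta(F_v(M_i)) = \beta_R(M_i)$, uniformly bounded. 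An $\F$-faithful system of parameters $\xx$, provided by Lemma~\ref{L:bound-sop}, remains faithful for $\{F_v(M_i)\}_{i \in I}$: by Lemma~\ref{L:stable-vanishing} each $x \cdot 1_{M_i}$ factors through an $R$-projective, so tensoring with $\Lambda e_v$ makes $x \cdot 1_{F_v(M_i)}$ factor through a $\Lambda$-projective. Then Lemma~\ref{L:lem12} bounds $\underline{\h}(F_v(M_i))$, and Corollary~\ref{C:1.5} completes the argument, showing that $\Lambda$ is of strongly unbounded $\textbf{lat}$-type.
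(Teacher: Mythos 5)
Your argument is correct and, at its core, is the paper's argument in disguise: the paper chooses a \emph{sink} $u$ of $Q$ and extends each $M_i$ by zero to a representation $L_i$ concentrated at $u$, which is exactly your $F_u(M_i)=\Lambda e_u\otimes_R M_i$, since for a sink the only path out of $u$ is trivial and hence $\Lambda e_u\cong Re_u$. Where you genuinely diverge is in the two verifications. First, you allow an arbitrary vertex $v$ and establish indecomposability and reflection of isomorphisms via the ring isomorphism $\End_\Lambda(F_v(M))\cong e_v\Lambda e_v\otimes$-adjunction, whereas the paper gets these for free from the sink choice; your version is more flexible but needs acyclicity only through $e_v\Lambda e_v\cong R$. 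Second, and more substantively, the paper bounds the $\underline{\h}$-length by directly identifying $\underline{\Hom}_\Lambda(L_i,L_j)\cong\underline{\Hom}_R(M_i,M_j)$ and $\underline{\Hom}_\Lambda(L_i,G')\cong\underline{\Hom}_R(M_i,G)$ and concluding $l(\underline{\h}(L_i))=b$ on the nose; this step quietly assumes that only the summand of the $\Lambda$-approximation $G$ sitting over the sink contributes, which is not entirely obvious since the approximations of the other simples need not vanish at $u$. Your route through Lemma~\ref{L:betti}, Lemma~\ref{L:stable-vanishing}, and Lemma~\ref{L:lem12} (uniform betti bound $\beta(F_v(M_i))=\beta_R(M_i)$ plus transport of a faithful s.o.p.\ along the exact functor $F_v$) sidesteps that computation entirely and only yields boundedness rather than equality of $\underline{\h}$-lengths, but boundedness together with infiniteness is all that Corollary~\ref{C:1.5} requires, so nothing is lost. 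One caveat applies equally to both proofs: condition (d) of Definition~\ref{lattice-order} for $F_v(M_i)$ (resp.\ for $L_i$) reduces, exactly as you say, to the projectivity of $\Hom_R(\Lambda,\omega)_\p$ over $\Lambda_\p^{\op}$, i.e.\ to the paper's unproved assertion that $RQ$ is an $R$-order; you correctly isolate this as the main obstacle rather than resolving it, which puts you on the same footing as the paper.
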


\begin{proof}
Clearly, $\Lambda$ is an $R$-order. By assumption, there is an infinite family 
$\{M_i\}_{i\in I}$ of indecomposable non-isomorphic $R$-lattices such that 
$l_R(\underline{\h}(M_{i})) = b$ for some $b>0$. Since $Q$ is finite and acyclic, there is a sink $u\in Q_0$. For each ${i\in I}$, let $L_i$ be the representation 
$((L_i)_a,\varphi_{\alpha})$ defined as follows:
\[ 
(L_i)_a = \left\{ {\begin{array}{ll}
M_i  & \text{ if $a = u$,}\\
0 & \text{otherwise,}
\end{array}} \right.
\]
and $\varphi_{\alpha}=0$ for all $\alpha\in Q_1$. Obviously, the $L_i$ are 
$\Lambda$-lattices. Since the $M_{i}$ are indecomposable as $R$-modules, the 
$L_{i}$ are indecomposable over $\Lambda$. 
Recalling the definition of $\underline{\h}$-length, it is easy to see that
$\underline{\h}(L_i)=\underline{\Hom}_{\Lambda}(L_i, L_i\oplus G')$,
where $G'$ is the $RQ$-lattice $((G')_a, \psi_\alpha)$ defined as follows:
\[ (G')_a= \left\{ {\begin{array}{ll}
G  & \text{ if $a=u$,}\\
0 & \text{ otherwise,}
\end{array}} \right. \]
where  $G$ is a minimal $\textbf{lat}$-approximation of the simple $R$-module $k$ and $\psi_{\alpha}=0$ for all $\alpha\in Q_1$. Since $u$ is a sink, it is not difficult to see that we have isomorphisms 
\[\underline{\Hom}_R(M_i,M_j)\simeq\underline{\Hom}_{\Lambda}(L_i,L_j) \quad  \textrm{and} \quad \underline{\Hom}_R(M_i,G)\simeq\underline{\Hom}_{\Lambda}(L_i,G')
\]
It follows that $l_{\Lambda}(\underline{\h}(L_{i})) =b$.
\end{proof}

\section{Hypersurfaces of bounded and strongly unbounded lattice type}

As we mentioned in the introduction, if $R$ is an $R$-order, then 
$R$-$\textbf{lat}$ = mCM$_0$, where mCM$_0$ is the category of  all maximal Cohen-Macaulay modules that are free on the punctured spectrum of $R$. Now let $S:=k[[x_0, \cdots, x_d]]$ be a ring of formal power series over $k$,  where $k$ is a field of characteristic 0. We fix a nonzero  element $f\in \mathfrak{n}^2$, where $\mathfrak{n} := (x_{0}, \ldots, x_{d})$ is the unique maximal ideal of $S$, 
and set $R:=S/(f)$. The so-called \texttt{double branched cover} $R^{\sharp}$ of $R$ is defined by setting $R^{\sharp} :=S[[u]]/(f+u^2)$, where $u$ is a new letter. Notice that $S$ is a subring of $R^{\sharp}$, making the latter a free $S$-module of rank 2 generated by (the classes of) $1$ and $u$.
Since $R \simeq R^{\sharp}/(u)$, any $R$-module can be viewed as an $R^{\sharp}$-module. Since $R$ and $ R^{\sharp}$ are Gorenstein rings, they are also orders. There are functors between the categories of mCM $R$-modules and mCM $R^{\sharp}$-modules, see \cite[Chapter 12]{yoshino1990cohen}. For an mCM $R$-module $M$, define $M^{\sharp}$ as the  first syzygy module $\Omega_{R^{\sharp}}^1(M)$ of $M$ viewed as an $R^{\sharp}$-module. On the other hand, if $N$ is an mCM  $R^{\sharp}$-module, we set $\overline{N} := N/uN$. In this section, motivated results of  Kn\"{o}rrer \cite{knorrer1987cohen} and Buchweitz-Greuel-Schreyer \cite[Theorem A]{buchweitz1987cohen}, we show that $R$ and  $R^{\sharp}$ are simultaneously of bounded (or strongly unbounded) $\textbf{lat}$-type. As  a consequence, we show that $R$ has strongly unbounded lattice type whenever $k$ is infinite, see Theorem \ref{21}.

We begin with a series of general observations. To avoid complicated notation, elements of~$R$ and $R^{\sharp}$ will be denoted by their representatives in $S$ and $S[[u]]$. First, notice that $S$, being a regular local ring, is a UFD, and hence $S[[u]]$ is also a UFD. Since $f$ is expressed in terms of the~$x_{i}$ only, we can easily deduce
\begin{lemma}\label{L:nzd}
The images of both $u$ and $f$ in $R^{\sharp}$ are non-zerodivisors. \qed
\end{lemma}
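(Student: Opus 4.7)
The plan is to exploit the fact that $R^{\sharp}=S[[u]]/(f+u^2)$ is a hypersurface in the regular local ring $S[[u]]=k[[x_0,\ldots,x_d,u]]$, hence Cohen--Macaulay, and then to verify that $u$ is part of a system of parameters.

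First I would observe that $f+u^2$ is a nonzero non-unit in $S[[u]]$. Nonzero because $f \neq 0$ and the coefficients of $f$ and $u^2$ live in disjoint monomials; non-unit because both $f$ and $u^2$ belong to the square of the maximal ideal $\mathfrak{n}' := (x_0,\ldots,x_d,u)$ of $S[[u]]$. Therefore $R^{\sharp}$ is a Cohen--Macaulay local ring of dimension $d+1$.

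Next, I would compute
\[
R^{\sharp}/(u) \;=\; S[[u]]/(u,\,f+u^2) \;=\; S/(f) \;=\; R,
\]
which has dimension $d$ (since $f$ is a nonzero non-unit in the domain $S$, the image of $f$ is a non-zerodivisor and $\dim S/(f)=\dim S-1=d$). Hence $\dim R^{\sharp}/(u) = \dim R^{\sharp}-1$, so $u$ is a parameter element of $R^{\sharp}$. In a Cohen--Macaulay local ring every parameter element is a non-zerodivisor, so the image of $u$ in $R^{\sharp}$ is a non-zerodivisor.

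Finally, for $f$ I would use the defining relation: in $R^{\sharp}$ we have $f = -u^2$, so $(f)=(u^2)$ as ideals of $R^{\sharp}$. Since the image of $u$ is a non-zerodivisor, so is the image of $u^2$, and therefore so is the image of $f$. I do not expect any serious obstacle here; the only point requiring minor care is the fact that $f+u^2 \in \mathfrak{n}'^{\,2}$ so that $R^{\sharp}$ is honestly a hypersurface (and not, say, a regular ring in disguise), which is where the hypothesis $f \in \mathfrak{n}^{2}$ is used.
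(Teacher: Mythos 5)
Your proof is correct, but it takes a genuinely different route from the paper's. The paper's argument (sketched in the sentence preceding the lemma) is a divisibility argument in the UFD $S[[u]]$: since $f$ involves only the $x_i$ and $f\neq 0$, the prime element $u$ does not divide $f+u^2$ (reduce modulo $u$), so $u$ is a non-zerodivisor modulo $(f+u^2)$; and any common prime factor of $f$ and $f+u^2$ would divide $u^2$, hence be $u$ up to a unit, which is impossible since $u\nmid f$. You instead argue via dimension theory: $R^{\sharp}$ is Cohen--Macaulay of dimension $d+1$ as a hypersurface section of the regular local ring $S[[u]]$, the quotient $R^{\sharp}/(u)\simeq R$ has dimension $d$, so $u$ is a parameter element and hence a non-zerodivisor in a Cohen--Macaulay local ring; the relation $f=-u^2$ in $R^{\sharp}$ then settles the claim for $f$. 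Both arguments are sound. Your reduction of the statement about $f$ to the statement about $u$ is cleaner than checking coprimality of $f$ with $f+u^2$, and the Cohen--Macaulay argument would survive in settings where unique factorization is unavailable, whereas the paper's route uses only factoriality of $S[[u]]$ and no dimension theory. One small remark: the hypothesis $f\in\mathfrak{n}^2$ is not actually needed anywhere in your argument---Cohen--Macaulayness of $R^{\sharp}$ only requires $f+u^2$ to be a nonzero non-unit, and a regular $R^{\sharp}$ would serve the dimension count just as well---so your closing caveat about $R^{\sharp}$ being ``honestly a hypersurface'' is unnecessary for this lemma.
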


Now suppose that $M$ is a stable mCM $R$-module. Thus $M$ has a minimal projective resolution coming from a reduced matrix factorization of $f$ in $S$. Namely, there are square matrices~$\phi$ and~$\psi$ of size, say, $n$ with entries in the maximal ideal $\mathfrak{n}$ of $S$ such that 
\[
\phi\psi = f 1_{n} = \psi\phi
\]
and the corresponding free resolution of $M$ is periodic of period at most 2 (see~\cite{Eis80}):
\[
\cdots \st{\psi}\lra R^{n} \st{\phi}\lra R^{n}\st{\psi}\lra R^{n} \st{\phi}\lra R^{n} \lra M \lra  0,
\]
where $\phi$ and $\psi$ are actually the classes of $\phi$ and $\psi$ modulo~$(f)$.

Our immediate goal is to construct a minimal free resolution of $M$ over 
$R^{\sharp}$. Let $\mathbb{P}$ denote the two-term complex
$R^{n} \overset{\phi}{\to} R^{n}$. Lifting it to $R^{\sharp}$, we have a two-term
complex $R^{\sharp n} \overset{\phi}{\to} R^{\sharp n}$, which we denote by 
$\mathbb{P}^{\sharp}$. By Lemma~\ref{L:nzd}, we have a short exact sequence of complexes
\[
0 \lra \mathbb{P}^{\sharp} \overset{u}{\lra} \mathbb{P}^{\sharp} \lra \mathbb{P} \lra 0.
\]
This gives a quasi-isomorphism between the mapping cone of $u$ and 
$\mathbb{P}$. Since the homology of~$\mathbb{P}$ is concentrated in degree zero and is isomorphic to $M$, we have a free presentation of $M$ 
over~$R^{\sharp}$:
\begin{equation}\label{E:presentation}
\xymatrix
	{
	\ldots \ar[r]
	& R^{\sharp n} \oplus R^{\sharp n} 
	\ar[r]^>>>>>{\left[ 
		\begin{smallmatrix}
		-u & \phi
		\end{smallmatrix}
	\right]}
	& R^{\sharp n} \ar[r]
	& M \ar[r]
	& 0.
	}
\end{equation}
 
\noindent Now notice that the reduced matrix factorization $(\phi, \psi)$ of $f \in S$ gives rise to a reduced matrix factorization $(\Phi, \Psi)$ of $f +u^{2} \in S[[u]]$, where 
 \[
 \Phi =
 \begin{bmatrix}
 \psi & u \\
 -u & \phi
\end{bmatrix}
 \quad
 \textrm{and}
 \quad
 \Psi =
 \begin{bmatrix}
 \phi & -u \\
 u & \psi
\end{bmatrix}
 \]
and, by yet another abuse of notation, $u$ denotes the scalar matrix with $u$ on the diagonal. This allows to extend~\eqref{E:presentation} to a complex 
\begin{equation}\label{E:resolution}
\xymatrix
	{
	\ldots \ar[r]^>>>>>{\Psi}
	&R^{\sharp n} \oplus R^{\sharp n} \ar[r]^{\Phi}
	& R^{\sharp n} \oplus R^{\sharp n} \ar[r]^{\Psi}
	& R^{\sharp n} \oplus R^{\sharp n} 
	\ar[r]^>>>>>{\left[ 
		\begin{smallmatrix}
		-u & \phi
		\end{smallmatrix}
	\right]}
	& R^{\sharp n} \ar[r]
	& M \ar[r]
	& 0.
	}
\end{equation}

\begin{lemma}
 The above complex is a minimal free resolution of the $R$-stable module $M$ viewed as an $R^{\sharp}$-module.
\end{lemma}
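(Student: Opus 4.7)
The plan is to verify three things: (a) that the displayed diagram is a complex, (b) that it is exact, and (c) that it is minimal. Parts (a) and (c) are essentially mechanical. For (a), the $2$-periodic tail inherits the complex condition from the matrix factorization identity $\Phi\Psi = \Psi\Phi = (f+u^2)1_{2n}$, which vanishes in $R^{\sharp}$. At the junction, a direct computation yields $[-u\ \phi]\Psi = [-u\phi+\phi u,\ u^2+\phi\psi] = [0,\ u^2+f] \equiv 0$ in $R^{\sharp}$, and exactness at $M$ is built into the mapping cone construction of \eqref{E:presentation}. For (c), since $(\phi,\psi)$ is reduced, the entries of $\phi$ and $\psi$ lie in $(x_0,\ldots,x_d)$; combined with the scalar $u$'s appearing in $[-u\ \phi]$, $\Phi$, and $\Psi$, every matrix entry reduces into the maximal ideal $(x_0,\ldots,x_d,u)$ of $R^{\sharp}$.

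The substance is (b). Exactness of the periodic tail $\cdots\xrightarrow{\Phi}R^{\sharp 2n}\xrightarrow{\Psi}R^{\sharp 2n}\xrightarrow{\Phi}R^{\sharp 2n}$ is immediate from Eisenbud's theorem applied to the matrix factorization $(\Phi,\Psi)$ of the non-zero-divisor $f+u^2\in S[[u]]$. What remains, and what I expect to be the main obstacle, is exactness at the junction, i.e., $\ker[-u\ \phi] = \im\Psi$. The inclusion $\im\Psi\subseteq\ker[-u\ \phi]$ is the computation above.

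For the reverse inclusion I see two routes. The concrete route uses that $R^{\sharp}=S\oplus Su$ as $S$-modules, so any $(a,b)\in\ker[-u\ \phi]$ has a unique decomposition $a=a_0+ua_1$, $b=b_0+ub_1$ with $a_i,b_i\in S^n$. The relation $-ua+\phi b=0$ in $R^{\sharp}$, combined with the reduction $u^2\equiv -f=-\phi\psi$, splits into the pair $a_0=\phi b_1$ and $b_0=-\psi a_1$ in $S$ (here the injectivity of $\phi$ and $\psi$ on $S^n$, forced by $\det\phi\cdot\det\psi=f^n$ being a non-zero-divisor, intervenes). One then checks directly that $\Psi(b_1,-a_1)^{T}=(a,b)^{T}$ in $R^{\sharp 2n}$. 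The more conceptual route applies the depth lemma to $0\to\Omega^1_{R^{\sharp}}M\to R^{\sharp n}\to M\to 0$, using $\dep_{R^{\sharp}}M=\dep_R M=d$ (since $u$ annihilates $M$, so every $R$-regular sequence on $M$ is already $R^{\sharp}$-regular), to conclude that $\Omega^1_{R^{\sharp}}M$ is MCM over $R^{\sharp}$; it therefore admits its own $R^{\sharp}$-matrix factorization, and a rank and uniqueness argument identifies that factorization with $(\Psi,\Phi)$, yielding exactness of the entire tail at once. I would favor the conceptual route, falling back on the concrete computation only for the identification step.
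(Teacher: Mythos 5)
Your concrete route is correct and is essentially the paper's own argument: the paper likewise uses the matrix factorization $(\Phi,\Psi)$ to reduce everything to exactness at the junction, and then verifies $\ker[-u\ \phi]=\im\Psi$ by a direct computation --- it shows that the two rows of $\Phi$ impose equivalent conditions on $(a,b)$, using that $\psi$ is monic on $R^{\sharp n}$ (as $f1_n=\phi\psi$ and $f$ is a non-zerodivisor on $R^{\sharp}$) and that $u$ is a non-zerodivisor, rather than splitting into $S$-components and exhibiting an explicit $\Psi$-preimage as you do. One caution: your preferred ``conceptual route'' is not really an alternative, since identifying the matrix factorization of the MCM module $\Omega^1_{R^{\sharp}}M$ with $(\Psi,\Phi)$ is exactly the assertion $\ker[-u\ \phi]=\im\Psi$; the concrete computation you relegate to a fallback is where all the content lies.
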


\begin{proof}
Since $(\Phi, \Psi)$ is a matrix factorization, we only need to show the exactness in degree 1, i.e., that the kernel of $[-u~\phi]$ is contained in the image of $\Psi$. The latter is equal to the kernel of $\Phi$. Thus we need to show that for 
$(a,b) \in R^{\sharp n} \oplus R^{\sharp n}$ such that $-ua +\phi(b) = 0$ we also have $\psi(a) + ub = 0$. Since $f$ is a non-zerodivisor on $R^{\sharp}$ and 
$f1_{n} = \phi\psi$, we have that $\psi$ is monic on~$R^{\sharp n}$ and, therefore,
\[
\begin{aligned}
 -ua +\phi(b) = 0 & \Leftrightarrow -u\psi(a) + \psi\phi(b) = 0 \\
 & \Leftrightarrow -u\psi(a) + f(b) = 0 \\
 & \Leftrightarrow -u\psi(a) - u^{2}(b) = 0 \\
 & \Leftrightarrow \psi(a) + u(b) = 0.
\end{aligned}
\]
In the first equivalence we used the fact that $u$ is a scalar matrix, and in the last -- that $u$ is a non-zerodivisor. This shows that~\eqref{E:resolution} is exact. Since all its matrix entries are in the maximal ideal of $R^{\sharp}$, this resolution is minimal.
\end{proof}

\begin{remark}\label{R:reduction}
Since $\Omega^{1}_{R^{\sharp}}M \simeq \cok\, \Psi$,  
$\Omega^{1}_{R}M$ is stable and 
\[
\overline{M^{\sharp}}  \simeq M \oplus 
\Omega^{1}_{R}M
\]
as $R$-modules. Moreover, since $\Phi$ and $\Psi$ are obviously conjugate, 
$\Omega^{1}_{R^{\sharp}}M^{\sharp} \simeq M^{\sharp}$.
\end{remark}

\begin{lemma}\label{lem13}
If $M$ and $C$ are mCM $R$-modules, then there is an $R^{\sharp}$-isomorphism
\[
\underline{\Hom}_{R^{\sharp}}(M^{\sharp},C^{\sharp})\simeq\underline{\Hom}_R(M\oplus\Omega_R^1 M,C).
\]
\end{lemma}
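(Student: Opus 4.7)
The plan is to establish the lemma in two stages. First I'll prove the intermediate adjunction-type formula
\[
\underline{\Hom}_{R^{\sharp}}(M^{\sharp}, C^{\sharp}) \simeq \underline{\Hom}_R(M, \overline{C^{\sharp}}),
\]
and then I'll rewrite the right-hand side using Remark~\ref{R:reduction} together with $\Omega$-periodicity of stable mCM modules over the hypersurface $R$.

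For the first stage, I start by applying $\Hom_{R^{\sharp}}(-, C^{\sharp})$ to the defining short exact sequence $0 \to M^{\sharp} \to R^{\sharp n} \to M \to 0$. Since $u$ is a non-zerodivisor on $C^{\sharp}$ but annihilates $M$, one has $\Hom_{R^{\sharp}}(M, C^{\sharp}) = 0$, and the long exact sequence collapses to
\[
0 \to C^{\sharp n} \to \Hom_{R^{\sharp}}(M^{\sharp}, C^{\sharp}) \to \Ext^1_{R^{\sharp}}(M, C^{\sharp}) \to 0.
\]
Next, applying $\Hom_{R^{\sharp}}(M, -)$ to $0 \to C^{\sharp} \xrightarrow{u} C^{\sharp} \to \overline{C^{\sharp}} \to 0$ and using that $u\cdot\id_M = 0$ forces $u$ to act as zero on every $\Ext^i_{R^{\sharp}}(M, -)$, I obtain a canonical identification $\Ext^1_{R^{\sharp}}(M, C^{\sharp}) \simeq \Hom_R(M, \overline{C^{\sharp}})$. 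Composing yields a natural surjection $\Theta: \Hom_{R^{\sharp}}(M^{\sharp}, C^{\sharp}) \twoheadrightarrow \Hom_R(M, \overline{C^{\sharp}})$ whose kernel consists precisely of those maps that extend to $R^{\sharp n} \to C^{\sharp}$.

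The main obstacle is verifying that $\Theta$ sends the submodule $P_{R^{\sharp}}(M^{\sharp}, C^{\sharp})$ of $R^{\sharp}$-projective-factoring maps precisely onto $P_R(M, \overline{C^{\sharp}})$, so that $\Theta$ descends to an isomorphism of stable Homs. The forward inclusion is clean: by naturality of $\Theta$, a factorization $\alpha = h g$ with $g: M^{\sharp} \to R^{\sharp k}$ and $h: R^{\sharp k} \to C^{\sharp}$ yields $\Theta(\alpha) = \overline{h} \circ \Theta(g)$, a composite through the $R$-free module $R^k = \overline{R^{\sharp k}}$. The reverse direction is more delicate: given $\beta = q \circ p: M \to R^k \to \overline{C^{\sharp}}$, I lift $q$ to $h: R^{\sharp k} \to C^{\sharp}$ (possible since $R^{\sharp k}$ is projective and $C^{\sharp} \twoheadrightarrow \overline{C^{\sharp}}$), and via $\Ext^1_{R^{\sharp}}(M, R^{\sharp k}) \simeq \Hom_R(M, R^k)$ I convert $p$ into an extension $0 \to R^{\sharp k} \to E' \to M \to 0$; comparing this extension with $0 \to M^{\sharp} \to R^{\sharp n} \to M \to 0$ via the universal property of projectives produces a map $\alpha': M^{\sharp} \to R^{\sharp k}$, and then $\alpha := h \circ \alpha'$ is an $R^{\sharp}$-projective-factoring map with $\Theta(\alpha) = \beta$.

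Once the intermediate adjunction is in hand, Remark~\ref{R:reduction} gives $\overline{C^{\sharp}} \simeq C \oplus \Omega_R^1 C$, hence $\underline{\Hom}_R(M, \overline{C^{\sharp}}) \simeq \underline{\Hom}_R(M, C) \oplus \underline{\Hom}_R(M, \Omega_R^1 C)$. Since $R = S/(f)$ is a hypersurface, stable mCM modules over $R$ satisfy $\Omega_R^2 \simeq \id$, so the shift adjunction in the (triangulated) stable mCM category gives $\underline{\Hom}_R(M, \Omega_R^1 C) \simeq \underline{\Hom}_R(\Omega_R^1 M, C)$. Reassembling the pieces then yields the claimed isomorphism.
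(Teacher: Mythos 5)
Your argument is correct, but it routes the computation through a different intermediate object than the paper does. You first establish the adjunction-type isomorphism $\underline{\Hom}_{R^{\sharp}}(M^{\sharp},C^{\sharp})\simeq\underline{\Hom}_R(M,\overline{C^{\sharp}})$ by a genuine homological computation: the collapse of the long exact sequence for $0\to M^{\sharp}\to R^{\sharp n}\to M\to 0$ (using $\Hom_{R^{\sharp}}(M,C^{\sharp})=0$ because $u$ kills $M$ but is a non-zerodivisor on $C^{\sharp}$), the identification $\Ext^1_{R^{\sharp}}(M,C^{\sharp})\simeq\Hom_R(M,\overline{C^{\sharp}})$, and a careful two-sided matching of the projective-factoring submodules — the last being the real content, and you handle it correctly, including the observation that $\ker\Theta$ consists of maps extending to $R^{\sharp n}$ and hence already lies in $P_{R^{\sharp}}(M^{\sharp},C^{\sharp})$. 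You then reduce in the \emph{target} via $\overline{C^{\sharp}}\simeq C\oplus\Omega^1_RC$ and move the syzygy across using $\Omega^2\simeq\mathrm{id}$ and the shift in the triangulated stable category. The paper instead reduces in the \emph{source}: it takes $\underline{\Hom}_{R^{\sharp}}((\Omega^1_RM)^{\sharp},C)$ as the pivot, notes that any map into $C$ factors through $\overline{(\Omega^1_RM)^{\sharp}}\simeq M\oplus\Omega^1_RM$, and then relates $\underline{\Hom}_{R^{\sharp}}((\Omega^1_RM)^{\sharp},C)$ to $\underline{\Hom}_{R^{\sharp}}(M^{\sharp},C^{\sharp})$ by lifting maps to first syzygies. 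By applying $\sharp$ to $\Omega^1_RM$ rather than to $M$, the paper sidesteps both the $\Ext$-calculation and any appeal to $2$-periodicity or the triangulated structure, staying entirely within explicit matrix-factorization resolutions and lifting arguments; your version is conceptually cleaner (it isolates the Kn\"orrer-type adjunction $\underline{\Hom}_{R^{\sharp}}(M^{\sharp},-)\simeq\underline{\Hom}_R(M,\overline{(-)})$, which is reusable) at the cost of importing standard but unproved-in-the-paper facts about the stable mCM category of a hypersurface. One small point worth making explicit in your write-up: both sides are annihilated by $u$ (multiplication by $u$ on $M^{\sharp}$ factors through the projective cover of $M$), so your $R$-linear identifications in the second stage do produce the claimed $R^{\sharp}$-isomorphism.
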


\begin{proof}
If $M$ or $C$ is free, the lemma is trivially true. Thus assume that $M$ and $C$ are stable. Let
\[
\cdots \st{\psi}\lra R^{n} \st{\phi}\lra R^{n}\st{\psi}\lra R^{n} \st{\phi}\lra R^{n} \lra M \lra  0
\]
and
\[
\cdots \st{\theta}\lra R^{s} \st{\eta}\lra R^{s}\st{\theta}\lra R^{s} \st{\eta}\lra R^{s} \lra C \lra  0
\]
be minimal free resolutions. 
As above, we have minimal free resolutions over $R^{\sharp}$:
\begin{equation}\label{E:M-res}
 \xymatrix
	{
	\ldots \ar[r]^>>>>>{\Psi}
	&R^{\sharp n} \oplus R^{\sharp n} \ar[r]^{\Phi}
	& R^{\sharp n} \oplus R^{\sharp n} \ar[r]^{\Psi}
	& R^{\sharp n} \oplus R^{\sharp n} 
	\ar[r]^>>>>>{\left[ 
		\begin{smallmatrix}
		-u & \phi
		\end{smallmatrix}
	\right]}
	& R^{\sharp n} \ar[r]
	& M \ar[r]
	& 0
	}
\end{equation}
and 
\[
\xymatrix
	{
	\ldots \ar[r]^>>>>>{\Theta}
	& R^{\sharp s} \oplus R^{\sharp s} \ar[r]^{H}
	& R^{\sharp s} \oplus R^{\sharp s} \ar[r]^{\Theta}
	& R^{\sharp s} \oplus R^{\sharp s} 
	\ar[r]^>>>>>{\left[ 
		\begin{smallmatrix}
		-u & \eta
		\end{smallmatrix}
	\right]}
	& R^{\sharp s} \ar[r]
	& C \ar[r]
	& 0.
	}
\]
%

An object of interest to us is $\Hom_{R^{\sharp}}((\Omega^{1}_{R}M)^{\sharp}, C)$. Since $u$ acts on $C$ by zero, any homomorphism 
$f \in \Hom_{R^{\sharp}}((\Omega^{1}_{R}M)^{\sharp}, C)$ factors through the reduction of $(\Omega^{1}_{R}M)^{\sharp}$ modulo $u$. By 
Remark~\ref{R:reduction}, with $M$ replaced by $\Omega^1_{R} M$ and $\Psi$ replaced by $\Phi$, the result of that reduction is isomorphic to $M\oplus\Omega_R^1 M$, and we have an induced homomorphism 
$g \in \Hom_R(M\oplus\Omega_R^1 M, C)$. Clearly, if $f$ factors through a projective (i.e., free) module, then so does $g$. Conversely, given 
$g \in \Hom_R(M\oplus\Omega_R^1 M, C)$, we can compose it with the reduction homomorphism to get some $f \in \Hom_{R^{\sharp}}((\Omega^{1}_{R}M)^{\sharp}, C)$. If $g$ factors through some $R^{t}$, then the same is true for 
$g\pi$, where $\pi : (\Omega^{1}_{R}M)^{\sharp} \to M \oplus\Omega_R^1 M$ is the reduction homomorphism. The short exact sequence 
$0 \lra R^{\sharp t} \overset{u}{\lra} R^{\sharp t} \lra R^{t} \lra 0$ is an mCM approximation. Since $(\Omega^{1}_{R}M)^{\sharp}$ is mCM, any map from it to $R^{t}$ lifts to $R^{\sharp t}$, showing that $f$ factors through a free 
$R^{\sharp}$-module. As a consequence, we have an isomorphism 
\begin{equation}\label{E:first-half}
 \underline{\Hom}_{R^{\sharp}}((\Omega^{1}_{R}M)^{\sharp}, C) \simeq
\underline{\Hom}_R(M\oplus \Omega^{1}_{R} M, C).
\end{equation}


On the other hand, given $f\in \Hom_{R^{\sharp}}((\Omega_R^1 M)^{\sharp},C)$, we can lift it to the first syzygy modules to obtain a commutative diagram 
\[
\xymatrix
	{
	0 \ar[r] 
	& M^{\sharp} \ar[r] \ar[d]_{h}
	& R^{{\sharp n}} \oplus R^{{\sharp n}} \ar[r] \ar[d]
	& (\Omega_R^1 M)^{\sharp} \ar[r] \ar[d]_{f}
	& 0
\\
	0 \ar[r]
	& C^{\sharp} \ar[r]
	& R^{{\sharp s}} \ar[r]
	& C \ar[r]
	& 0
	}
\] 
Here $h$ is determined by $f$ uniquely up to a map factoring through a projective. 
Conversely, given $h\in\Hom_{R^{\sharp}}(M^{\sharp},C^{\sharp})$, since $R^{\sharp}$ is a Gorenstein ring and $(\Omega_R^1 M)^{\sharp}$ is an mCM $R^{\sharp}$-module, we can extend $h$ to obtain $f\in\Hom_{R^{\sharp}}((\Omega_R^1 M)^{\sharp},C)$,   which is unique up to a map factoring through a projective. Thus we  have an isomorphism
\[
\underline{\Hom}_{R^{\sharp}}(M^{\sharp},C^{\sharp}) \simeq 
\underline{\Hom}_{R^{\sharp}}((\Omega^{1}_{R}M)^{\sharp}, C).
\]
Comparing this with~\eqref{E:first-half} yields the desired result.
\end{proof}

\begin{lemma}\label{lem14}
In the above notation, the functors $M \mapsto M^{\sharp}$ and 
$N \mapsto \overline{N}$ induce functors between the categories of lattices over $R$
and over $R^{\sharp}$.
\end{lemma}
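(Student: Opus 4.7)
The plan is to verify that both constructions preserve the property of being locally projective on the punctured spectrum; the mCM preservation and the functoriality on mCM categories are already covered by the classical theory cited from~\cite[Chapter 12]{yoshino1990cohen}, and our only additional obligation is to carry the punctured-spectrum condition across.

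For $M\mapsto M^{\sharp}$, I would fix a non-maximal prime $\mathfrak{q}\subset R^{\sharp}$ and split into two cases according to whether $u\in\mathfrak{q}$. If $u\notin\mathfrak{q}$, then $u$ is a unit in $R^{\sharp}_{\mathfrak{q}}$; since $uM=0$ one has $M_{\mathfrak{q}}=0$, so localizing the short exact sequence $0\to M^{\sharp}\to R^{\sharp n}\to M\to 0$ arising from~\eqref{E:presentation} yields $M^{\sharp}_{\mathfrak{q}}\simeq R^{\sharp n}_{\mathfrak{q}}$, which is free. If $u\in\mathfrak{q}$, then $\mathfrak{q}$ is the preimage of a non-maximal prime $\mathfrak{p}\subset R$ under $R^{\sharp}\twoheadrightarrow R^{\sharp}/(u)=R$, and $M_{\mathfrak{q}}\simeq M_{\mathfrak{p}}$ is free over $R_{\mathfrak{p}}=R^{\sharp}_{\mathfrak{q}}/(u)$ of some rank $r$. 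Since $u$ is $R^{\sharp}_{\mathfrak{q}}$-regular by Lemma~\ref{L:nzd}, the two-term complex $0\to (R^{\sharp}_{\mathfrak{q}})^{r}\xrightarrow{u}(R^{\sharp}_{\mathfrak{q}})^{r}\to M_{\mathfrak{q}}\to 0$ is a projective resolution over $R^{\sharp}_{\mathfrak{q}}$, and a Schanuel comparison with the localized sequence $0\to M^{\sharp}_{\mathfrak{q}}\to R^{\sharp n}_{\mathfrak{q}}\to M_{\mathfrak{q}}\to 0$ shows that $M^{\sharp}_{\mathfrak{q}}$ is stably free; being finitely generated over the local ring $R^{\sharp}_{\mathfrak{q}}$, it is then actually free.

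The check for $N\mapsto\overline{N}$ is shorter: given a non-maximal $\mathfrak{p}\subset R$, its preimage $\mathfrak{q}\subset R^{\sharp}$ is a proper prime containing $u$, hence strictly contained in the maximal ideal of $R^{\sharp}$; consequently $N_{\mathfrak{q}}$ is $R^{\sharp}_{\mathfrak{q}}$-free by hypothesis, and reducing modulo $u$ gives that $\overline{N}_{\mathfrak{p}}=N_{\mathfrak{q}}/uN_{\mathfrak{q}}$ is $R_{\mathfrak{p}}$-free. I expect the only nontrivial step to be the Schanuel comparison above, where the key inputs are that $u$ remains a non-zerodivisor after localization (immediate from Lemma~\ref{L:nzd}) and that stably free modules of finite rank over a local ring are free; everything else reduces to the local-freeness hypothesis on $M$ or $N$ together with the observation that passage between $R$ and $R^{\sharp}$ is either trivial (when $u$ is inverted) or a reduction modulo $u$.
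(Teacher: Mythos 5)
Your proof is correct, but it follows a genuinely different route from the paper's. You verify the punctured-spectrum condition head-on: localize at a non-maximal prime $\mathfrak{q}$ of $R^{\sharp}$, split on whether $u\in\mathfrak{q}$, and in the nontrivial case compare the localized syzygy sequence $0\to M^{\sharp}_{\mathfrak q}\to R^{\sharp n}_{\mathfrak q}\to M_{\mathfrak q}\to 0$ with the length-one free resolution $0\to (R^{\sharp}_{\mathfrak q})^{r}\xrightarrow{u}(R^{\sharp}_{\mathfrak q})^{r}\to M_{\mathfrak q}\to 0$ via Schanuel; the reverse direction is just reduction modulo $u$ of a free module. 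All the ingredients check out: the bijection between primes of $R$ and primes of $R^{\sharp}$ containing $u$ matches maximal ideals to maximal ideals, $u$ stays regular after localizing, and finitely generated stably free modules over a local ring are free. The paper instead argues indirectly: it shows $\underline{\Hom}_{R^{\sharp}}(M^{\sharp},M^{\sharp})$ is artinian by transporting the artinian-ness of $\underline{\Hom}_R(M\oplus\Omega^1_RM,M)$ through the isomorphism of Lemma~\ref{lem13}, and for $\overline{N}$ it first builds the resolution of $\overline{N}$ over $R^{\sharp}$ to get the extension $0\to\Omega^1N\to(\overline{N})^{\sharp}\to N\to 0$ and then runs Lemma~\ref{lem13} backwards. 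Your argument is more elementary and self-contained --- it needs neither Lemma~\ref{lem13} nor the (implicit) equivalence between artinian stable endomorphisms and local freeness on the punctured spectrum --- whereas the paper's version costs nothing in context because Lemma~\ref{lem13} and the resolution of $\overline{N}$ (with its byproduct, the split sequence~\eqref{split-sequence}) are needed again for Lemma~\ref{lem20} and Theorem~\ref{strongly}. One presentational caveat: you defer the mCM preservation entirely to Yoshino; that is legitimate (the paper does the same for the functors themselves), but for $\overline{N}$ it silently uses that $u$ is $N$-regular, which is worth a sentence.
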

\begin{proof}
If $M$ is an $R$-lattice, then so is $\Omega_R M$, and 
$\underline{\Hom}_R(\Omega_R^1 M\oplus M, M)$ is an artinian $R$-module. Hence it is artinian as an $R^{\sharp}$-module. As~\eqref{E:M-res} shows, 
$M^{\sharp}$ is an mCM $R^{\sharp}$-module. 
Hence, by Lemma \ref{lem13},  $\underline{\Hom}_{R^{\sharp}}(M^{\sharp}, M^{\sharp})$ is an artinian $R^{\sharp}$-module, which shows that $M^{\sharp}$ is projective on the punctured spectrum of $R^{\sharp}$, and therefore $M^{\sharp}$ is an $R^{\sharp}$-lattice.

Assume now that $N$ is an $R^{\sharp}$-lattice. If $N$ is free, the claim is obvious. Thus assume that~$N$ is stable. In this case, $N$ has a minimal free resolution 
\begin{equation}\label{resolution-P}
 \mathbb{P}   \quad \quad \ldots \overset{\Phi}{\lra} P \overset{\Psi}{\lra} P \overset{\Phi}{\lra} P \lra N \lra 0
\end{equation}
coming from a reduced matrix factorization of $f +u^{2}$ over $S[[u]]$. By Lemma~\ref{L:nzd}, $\overline{N}$ is an mCM $R$-module. Its minimal projective resolution as an $R^{\sharp}$-module is given by the mapping cone 
\[
\xymatrix
	{
	\ldots \ar[r]
	& P^{2} \ar[r]^{\left[ \begin{smallmatrix} -\Psi & 0\\ u & \Phi \end{smallmatrix} \right]}
	& P^{2} \ar[r]^{\left[ \begin{smallmatrix} -\Phi & 0\\ u & \Psi \end{smallmatrix} \right]}
	& P^{2} \ar[r]^{\left[ \begin{smallmatrix} u & \Phi \end{smallmatrix} \right]}
	& P \ar[r]
	& \overline{N} \ar[r]
	& 0
	}
\] 
of the injective chain map $\mathbb{P} \overset{u}{\lra} \mathbb{P}$. 
Accordingly, we have a minimal free resolution  
\[
\xymatrix
	{
	 \mathbb{Q} \quad
	& \ldots \ar[r]
	& P^{2} \ar[r]^{\left[ \begin{smallmatrix} -\Psi & 0\\ u & \Phi \end{smallmatrix} \right]}
	& P^{2} \ar[r]^{\left[ \begin{smallmatrix} -\Phi & 0\\ u & \Psi \end{smallmatrix} \right]}
	& P^{2} \ar[r]
	& (\overline{N})^{\sharp} \ar[r]
	& 0
	}
\] 
of $(\overline{N})^{\sharp} = \Omega^{1}_{R^{\sharp}}\overline{N}$. This resolution is the mapping cone of the chain map 
$\mathbb{P} \to \mathbb{P}_{\geq 1}$ given by multiplication by~$u$. Here the subscript $\geq 1$ indicates truncation in degree 1. Thus we have a short exact sequence 
\[
0 \lra  \mathbb{P}_{\geq 1} \lra  \mathrm{Con}(u) \lra \mathbb{P}[-1] \lra 0
\]
of complexes. The corresponding long homology exact sequence degenerates into a short exact sequence 
\begin{equation}\label{split-sequence}
 0 \lra \Omega^{1}N \lra (\overline{N})^{\sharp} \lra N \lra 0
\end{equation}
which shows that $(\overline{N})^{\sharp}$ is an $R^{\sharp}$-lattice. Therefore, 
$\underline{\Hom}_{R^{\sharp}}((\overline{N})^{\sharp}, (\overline{N})^{\sharp})$
 is an artinian $R^{\sharp}$-module. Applying Lemma \ref{lem13}, we conclude that $\underline{\Hom}_R(\overline{N} \oplus \Omega_R^1\overline{N}, \overline{N})$ is an artinian $R^{\sharp}$-module and so it is artinian as an $R$-module. Hence, $\overline{N}$ is an $R$-lattice, as required.
\end{proof}

\begin{remark}\label{R:0-homotopy}
 The argument in~\cite[Lemma 8.17]{leuschke2012cohen} shows that the resolution $\mathbb{P}$ from~\eqref{resolution-P} can be chosen in the form 
 $\Phi = u \otimes_{S} 1_{N} +1_{S[[u]]} \otimes_{S} \phi$ and $\Psi = u \otimes_{S} 1_{N} -1_{S[[u]]} \otimes_{S} \phi$, where $\phi$ is the endomorphisms of $N$ viewed as an $S$-module. It is easy now to show that the chain map $\mathbb{P} \to \mathbb{P}_{\geq 1}$ is 0-homotopic (just take the map ${\frac{1}{2}\left[ \begin{smallmatrix} -1 & 0\\ 1 & 1 \end{smallmatrix} \right]}$ as the homotopy). As a consequence, the sequence~\eqref{split-sequence} splits.
\end{remark}

Using the argument given in the proof of \cite[Theorem 12.5]{yoshino1990cohen}
and applying Remark~\ref{R:0-homotopy}, we have the next result. 

\begin{theorem}\label{cor7}
 $R$ is of finite $\textbf{lat}$-type if and only if  $R^{\sharp}$ is of finite $\textbf{lat}$-type. \qed
\end{theorem}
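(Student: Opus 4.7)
The plan is to exploit the two reduction functors $M \mapsto M^{\sharp}$ and $N \mapsto \overline{N}$ between $R$-$\textbf{lat}$ and $R^{\sharp}$-$\textbf{lat}$ established in Lemma~\ref{lem14}, together with the two key identities
\[
\overline{M^{\sharp}} \simeq M \oplus \Omega^{1}_{R} M
\qquad \text{and} \qquad
(\overline{N})^{\sharp} \simeq N \oplus \Omega^{1}_{R^{\sharp}} N,
\]
holding as $R$-modules and $R^{\sharp}$-modules respectively. The first is supplied by Remark~\ref{R:reduction}. The second follows from the short exact sequence~\eqref{split-sequence} constructed in the proof of Lemma~\ref{lem14}, which splits by Remark~\ref{R:0-homotopy}. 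Thus, although the two functors are not mutually inverse, their compositions are ``identity plus syzygy'' in each direction, which is enough to transport indecomposable summands in a controlled way via Krull-Remak-Schmidt (Lemma~\ref{L:KRS}).

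For the forward implication, assume $R$ is of finite $\textbf{lat}$-type, with a complete list $M_{1}, \ldots, M_{k}$ of indecomposable $R$-lattices. Let $N$ be any indecomposable $R^{\sharp}$-lattice. By Lemma~\ref{lem14}, $\overline{N}$ is an $R$-lattice, so $\overline{N} \simeq \bigoplus_{i} M_{i}^{a_{i}}$ for some multiplicities $a_{i} \geq 0$. Applying $(-)^{\sharp}$ and invoking the second identity yields
\[
N \oplus \Omega^{1}_{R^{\sharp}} N \;\simeq\; (\overline{N})^{\sharp} \;\simeq\; \bigoplus_{i} (M_{i}^{\sharp})^{a_{i}}.
\]
By Krull-Remak-Schmidt, $N$ is isomorphic to an indecomposable direct summand of some $M_{i}^{\sharp}$. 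Since each $M_{i}^{\sharp}$ has only finitely many indecomposable direct summands up to isomorphism, there are only finitely many possibilities for $N$.

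The converse is entirely symmetric: if $R^{\sharp}$-$\textbf{lat}$ has a complete list $N_{1}, \ldots, N_{t}$ of indecomposable objects, then for any indecomposable $R$-lattice $M$ one decomposes $M^{\sharp} \simeq \bigoplus_{j} N_{j}^{b_{j}}$, reduces modulo $u$, and invokes the first identity to obtain $M \oplus \Omega^{1}_{R} M \simeq \bigoplus_{j} \overline{N_{j}}^{b_{j}}$; hence $M$ is a direct summand of some $\overline{N_{j}}$, bounding the number of indecomposable $R$-lattices.

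The substantive input has already been isolated in Remark~\ref{R:0-homotopy}, where the splitting of~\eqref{split-sequence} is produced from the explicit matrix-factorization form of $\Phi$ and $\Psi$; once that splitting is available, the remaining argument is a clean Krull-Remak-Schmidt bookkeeping that mirrors the proof of \cite[Theorem 12.5]{yoshino1990cohen}. The only minor point to verify is that free modules cause no trouble, but this is immediate: $\Omega^{1}_{R^{\sharp}}(R) = R^{\sharp}$ (from $0 \to R^{\sharp} \xrightarrow{u} R^{\sharp} \to R \to 0$) and $\overline{R^{\sharp}} = R$, so $R$ and $R^{\sharp}$ correspond to each other and appear automatically in the two lists.
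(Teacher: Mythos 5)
Your proof is correct and is essentially the paper's argument: the paper simply cites the proof of Yoshino's Theorem 12.5 together with Remark~\ref{R:0-homotopy}, and what you have written out is exactly that argument — the two reduction functors, the identities $\overline{M^{\sharp}} \simeq M \oplus \Omega^{1}_{R}M$ and $(\overline{N})^{\sharp} \simeq N \oplus \Omega^{1}_{R^{\sharp}}N$ (the latter from the splitting of~\eqref{split-sequence}), and Krull--Remak--Schmidt bookkeeping. Your handling of the free summands is a reasonable extra check and causes no issues.
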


As an immediate consequence of the above theorem, we have
\begin{cor}\label{cor6}
$R$ is of bounded lattice type if and only if $R^{\sharp}$ is of bounded lattice type.
\end{cor}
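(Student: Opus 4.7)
The plan is to derive this as a short corollary by combining two earlier results: Theorem~\ref{bounded} (the first Brauer-Thrall theorem for lattices) and Theorem~\ref{cor7} (the finite-$\textbf{lat}$-type transfer between $R$ and $R^{\sharp}$). The statement is flagged as an immediate consequence of Theorem~\ref{cor7}, and indeed nothing genuinely new is required.

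The first step is to observe that, for any $R$-order in the sense of Section~2, the notions of bounded lattice type and finite $\textbf{lat}$-type coincide. One direction is trivial: if $\Lambda$ is of finite $\textbf{lat}$-type, then there are only finitely many isoclasses of indecomposable lattices, each of finite $\underline{\h}$-length, so the supremum over this finite family is a bound. The other direction is precisely the content of Theorem~\ref{bounded}.

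Next I would confirm that this equivalence applies to both $R$ and $R^{\sharp}$. Both are complete Gorenstein local hypersurfaces, hence complete Cohen-Macaulay local rings with canonical module (the ring itself), so each is an order over itself in the sense of Definition~\ref{D:order}. Theorem~\ref{bounded} therefore applies to both rings.

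Stringing everything together yields the result: $R$ is of bounded lattice type $\Longleftrightarrow$ $R$ is of finite $\textbf{lat}$-type (by Theorem~\ref{bounded} and the trivial converse) $\Longleftrightarrow$ $R^{\sharp}$ is of finite $\textbf{lat}$-type (by Theorem~\ref{cor7}) $\Longleftrightarrow$ $R^{\sharp}$ is of bounded lattice type (again by Theorem~\ref{bounded} and its converse). There is no real obstacle to overcome at this stage; the substantive work has already been done in proving Theorem~\ref{bounded} and Theorem~\ref{cor7}, and the corollary is merely their juxtaposition.
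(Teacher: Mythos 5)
Your proposal is correct and follows exactly the paper's route: the published proof is simply ``Apply Theorems~\ref{bounded} and~\ref{cor7},'' which is the chain of equivalences you spell out. The additional details you supply (the trivial converse of Theorem~\ref{bounded} and the observation that both $R$ and $R^{\sharp}$ are orders over themselves) are accurate and merely make explicit what the paper leaves implicit.
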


\begin{proof}
Apply Theorems \ref{bounded} and \ref{cor7}.
\end{proof}

\begin{lemma}\label{L:mod-u}
Let $N$ be an mCM $R^{\sharp}$-module. The canonical surjection $\pi : N \to \overline{N}$ induces an isomorphism 
\[
(\underline{\pi, k}) : \underline{\Hom}_R(\overline{N}, k) \lra
\underline{\Hom}_{R^{\sharp}} (N,k)
\]
%
%
%
%
%
%
%
\end{lemma}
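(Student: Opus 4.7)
The plan is to factor the argument through the level of ordinary Hom before passing to the stable quotient. The underlying observation is that $u$ annihilates $k$: indeed $u \in \mathfrak{m}_{R^{\sharp}}$ and $k = R^{\sharp}/\mathfrak{m}_{R^{\sharp}}$. Consequently, every $R^{\sharp}$-homomorphism $f : N \to k$ vanishes on $uN$ and therefore factors uniquely through the quotient $\pi : N \to \overline{N} = N/uN$ as $f = g\pi$ for some $R$-homomorphism $g : \overline{N} \to k$. Thus the assignment $g \mapsto g\pi$ already gives a bijection
\[
\pi^{*} : \Hom_R(\overline{N}, k) \lra \Hom_{R^{\sharp}}(N, k),
\]
and it remains to verify that $\pi^{*}$ sends maps factoring through a free $R$-module to maps factoring through a free $R^{\sharp}$-module, and conversely.

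One direction of this is automatic from the bijection: since $\pi^{*}$ is surjective, the induced map $(\underline{\pi, k})$ on stable Homs is surjective as well. For injectivity, the plan is: suppose $g : \overline{N} \to k$ is such that $g\pi$ factors through some free $R^{\sharp}$-module, say $g\pi = h \circ \tilde{p}$ with $\tilde{p} : N \to (R^{\sharp})^{t}$ and $h : (R^{\sharp})^{t} \to k$. Since $u$ annihilates $k$, the map $h$ vanishes on $u(R^{\sharp})^{t}$, hence factors as $h = \bar{h} \circ q$, where $q : (R^{\sharp})^{t} \twoheadrightarrow (R^{\sharp})^{t}/u(R^{\sharp})^{t} \simeq R^{t}$ and $\bar{h} : R^{t} \to k$. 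Reducing $\tilde{p}$ modulo $u$ produces $\overline{\tilde{p}} : \overline{N} \to R^{t}$ satisfying $\overline{\tilde{p}} \circ \pi = q \circ \tilde{p}$, so
\[
g\pi \;=\; h \tilde{p} \;=\; \bar{h}\, q\, \tilde{p} \;=\; \bar{h}\, \overline{\tilde{p}}\, \pi.
\]
Since $\pi$ is surjective, one can cancel it on the right to conclude $g = \bar{h} \circ \overline{\tilde{p}}$, exhibiting $g$ as a map factoring through the free $R$-module $R^{t}$. Hence $(\underline{\pi, k})$ is injective on stable Homs.

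There is no genuine obstacle here: mCM hypotheses on $N$, the structure of the matrix factorization, the short exact sequence $0 \to R^{\sharp} \xrightarrow{u} R^{\sharp} \to R \to 0$, etc., play no role at all. The entire argument is a formal consequence of the universal property of the surjection $\pi : N \to \overline{N}$ together with the single fact that $u$ acts as zero on $k$. The ``work'' is just the chase $g\pi = \bar{h}\,\overline{\tilde{p}}\,\pi \Rightarrow g = \bar{h}\,\overline{\tilde{p}}$, so the proof should be a short direct verification rather than an application of any of the heavier machinery (syzygies, matrix factorizations, the double branched cover) developed earlier in the section.
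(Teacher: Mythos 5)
Your injectivity and surjectivity arguments are fine and match the paper's, but there is a genuine gap at the point you wave away: the map $(\underline{\pi, k})$ must first be shown to be \emph{well-defined}, i.e.\ if $g : \overline{N} \to k$ factors through a free $R$-module $R^{n}$, then $g\pi$ must factor through a free $R^{\sharp}$-module. Your sentence ``one direction of this is automatic from the bijection: since $\pi^{*}$ is surjective, the induced map on stable Homs is surjective'' is a non sequitur — surjectivity of $\pi^{*}$ yields surjectivity of the induced map only \emph{after} the induced map is known to exist, and existence is exactly the forward direction you never prove. The difficulty is that if $g = h \circ p$ with $p : \overline{N} \to R^{n}$, then $g\pi$ factors through $R^{n}$, but $R^{n}$ is \emph{not} a projective $R^{\sharp}$-module (it has projective dimension one over $R^{\sharp}$), so this is not yet a factorization through an $R^{\sharp}$-projective.

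This is precisely where the hypotheses you declare irrelevant enter. The paper closes the gap by observing that $0 \to R^{\sharp n} \xrightarrow{\,u\,} R^{\sharp n} \to R^{n} \to 0$ is the mCM approximation of $R^{n}$ over the Gorenstein ring $R^{\sharp}$ (its kernel is free, hence of finite projective dimension), and that $N$ being mCM forces the map $p\pi : N \to R^{n}$ to lift over $R^{\sharp n} \to R^{n}$ (equivalently, $\Ext^{1}_{R^{\sharp}}(N, R^{\sharp n}) = 0$). Only then does $g\pi$ factor through the genuinely free $R^{\sharp}$-module $R^{\sharp n}$. So your closing claim that the mCM hypothesis on $N$ and the sequence $0 \to R^{\sharp} \xrightarrow{u} R^{\sharp} \to R \to 0$ ``play no role at all'' is exactly backwards: they are dispensable for surjectivity and injectivity, but indispensable for well-definedness. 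You need to add this lifting argument for the proof to be complete.
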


\begin{proof}
 To show that $(\underline{\pi, k})$ is well-defined, pick an arbitrary map
 $f : \overline{N} \to k$ that factors through a projective, i.e., some $R^{n}$. Then the same is true for $f\pi$. Since $R$ is an $R^{\sharp}$-module of projective dimension one, its mCM approximation is $R^{\sharp n}$. Since $N$ is mCM, we now have that $f\pi$ factors through $R^{\sharp n}$. We have just shown that
 $(\underline{\pi, k})$ is well-defined. Since $u$ annihilates $k$, any map $N \to k$ extends to a map $\overline{N} \to k$. It follows that $(\underline{\pi, k})$ is epic. Finally, if the map $f\pi$ factors through some $R^{\sharp}$, then reducing it modulo $u$, we recover $f$ and, at the same time, have a factorization of $f$ through $R^{n}$, which shows that $(\underline{\pi, k})$ is monic.
\end{proof}

\begin{lemma}\label{lem20}
Let  $g : G \lra k$  and $h : H \lra k$ be minimal $\textbf{lat}$-approximations of $k$ as an $R$-module and as an $R^{\sharp}$-module, respectively. Then, for any  $N\in R^{\sharp} $-$\textbf{lat}$,
\[
\underline{\Hom}_{R^{\sharp}}(N,H)\simeq \underline{\Hom}_R(\overline{N}, G).
\]
In particular, for any $M\in R$-$\textbf{lat}$,
\[
\underline{\Hom}_{R^{\sharp}}(M^{\sharp},H)\simeq \underline{\Hom}_R(M\oplus\Omega_R^1 M, G).
\]
\end{lemma}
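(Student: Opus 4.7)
The plan is to chain three isomorphisms through the residue field $k$:
\[
\underline{\Hom}_{R^{\sharp}}(N,H) \;\simeq\; \underline{\Hom}_{R^{\sharp}}(N,k) \;\simeq\; \underline{\Hom}_R(\overline{N},k) \;\simeq\; \underline{\Hom}_R(\overline{N},G),
\]
where the middle isomorphism is Lemma~\ref{L:mod-u} and the outer two come from the fact that $H\to k$ and $G\to k$ are minimal $\textbf{lat}$-approximations. The second assertion of the lemma then follows by specializing the first to $N=M^{\sharp}$ and invoking Remark~\ref{R:reduction}, which identifies $\overline{M^{\sharp}} \simeq M\oplus\Omega_R^1 M$.

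To establish the right-hand isomorphism $\underline{\Hom}_R(\overline{N},G)\simeq\underline{\Hom}_R(\overline{N},k)$, I would apply $\Hom_R(\overline{N},-)$ to the short exact sequence $0\to Y_G\to G\to k\to 0$ underlying the approximation. Since $R=S/(f)$ is a hypersurface, hence Gorenstein, Corollary~\ref{C:fpd} guarantees that $Y_G$ has finite projective dimension. Combined with the standard vanishing $\Ext_R^{>0}(L,R)=0$ for $L$ maximal Cohen-Macaulay, a dimension-shift argument along a finite free resolution of $Y_G$ yields $\Ext_R^i(\overline{N},Y_G)=0$ for all $i\ge 1$, and an analogous lifting argument (with obstructions living in those same Ext groups) shows further that every morphism $\overline{N}\to Y_G$ factors through a free $R$-module, i.e.\ $\underline{\Hom}_R(\overline{N},Y_G)=0$. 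A short diagram chase then promotes the resulting long exact sequence in $\Hom$ to an isomorphism after reducing modulo projectives: surjectivity is automatic since any map $\overline{N}\to k$ lifts to $\overline{N}\to G$, and for injectivity, if $f:\overline{N}\to G$ has composite with $G\to k$ factoring through some $R^m$, subtracting a lift of that factorization produces a map $\overline{N}\to Y_G$, which by the preceding vanishing already factors through a free.

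The left-hand isomorphism $\underline{\Hom}_{R^{\sharp}}(N,H) \simeq \underline{\Hom}_{R^{\sharp}}(N,k)$ is proved by the verbatim argument over $R^{\sharp}$, using that $R^{\sharp}=S[[u]]/(f+u^2)$ is again a hypersurface, so Corollary~\ref{C:fpd} still applies and $Y_H$ has finite projective dimension. Concatenating the three isomorphisms then completes the proof of the first claim.

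The main subtle point is the vanishing $\underline{\Hom}_R(\overline{N},Y_G)=0$ (and its $R^{\sharp}$-counterpart): this is what upgrades the $\Hom$ long exact sequence into a genuine isomorphism after modding out projectives, and is the step where the Gorenstein hypothesis is used essentially, through the vanishing of $\Ext_R^{>0}(L,R)$ for $L$ mCM. Everything else is a bookkeeping exercise involving the universal property of the $\textbf{lat}$-approximations.
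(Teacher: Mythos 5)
Your proposal is correct and follows essentially the same route as the paper: both chain $\underline{\Hom}_{R^{\sharp}}(N,H)\simeq\underline{\Hom}_{R^{\sharp}}(N,k)\simeq\underline{\Hom}_R(\overline{N},k)\simeq\underline{\Hom}_R(\overline{N},G)$ using Lemma~\ref{L:mod-u} for the middle step and the minimal approximation property together with Corollary~\ref{C:fpd} for the outer ones. Your explicit lift-and-subtract argument for injectivity, resting on the vanishing of $\underline{\Hom}$ into the finite-projective-dimension kernel $Y_G$, is just a spelled-out version of the paper's appeal to half-exactness of stable $\Hom$ and the projectivity of the mCM approximation of a module of finite projective dimension.
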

\begin{proof} Since $R$ and $R^{\sharp}$ are Gorenstein rings, Corollary~\ref{C:fpd} shows that $\Ker\, g$ and $\Ker\, h$ are of finite projective dimension.
Given any mCM $R$-module $M$, we claim that the induced homomorphism  
$\rho : \underline{\Hom}_{R}(M,G) \lra \underline{\Hom}_R(M,k)$ is an isomorphism. Indeed, by Corollary~\ref{C:fpd}, $g$ is also an mCM approximation, and therefore any map from $M$ to $k$ lifts over $g$. It follows that $\rho$ is epic. To show that $\rho$ is monic, we first observe that the functor covariant Hom modulo projectives is half-exact. If $f \in {\Hom}_{R}(M,G)$ is such that $\rho([f]) = 0$ (i.e., $g f$ factors through a projective), then, by the half-exactness, $f$ factors through the kernel of the approximation up to a map factoring through a projective. By Corollary~\ref{C:fpd}, that kernel is of finite projective dimension, and therefore its mCM approximation is projective. It follows that the map $M \to G$ factoring through $\Ker\, g$ also factors through a projective. Hence, so does $f$, and $\rho$ is monic. Exactly the same argument shows that, for any mCM $R^{\sharp}$-module $N$, the induced homomorphism 
$\underline{\Hom}_{R^{\sharp}}(N,H) \to \underline{\Hom}_{R^{\sharp}}(N,k)$
is an isomorphism. Combining these isomorphisms with Lemma~\ref{L:mod-u},
we now have
$\underline{\Hom}_{R^{\sharp}}(N,H) \simeq \underline{\Hom}_{R^{\sharp}} (N,k) \simeq \underline{\Hom}_R(\overline{N}, k) \simeq \underline{\Hom}_R(\overline{N}, G)$.
%
%
\end{proof}

\begin{theorem}\label{strongly}
$R$ has strongly unbounded lattice type if and only if $R^{\sharp}$ does.
\end{theorem}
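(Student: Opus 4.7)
The plan is to transfer the infinite families of indecomposable lattices back and forth between $R$ and $R^{\sharp}$ using the two functors $M\mapsto M^{\sharp}$ and $N\mapsto \overline{N}$ of Lemma~\ref{lem14}, verifying at each step that (i)~non-isomorphic lattices on one side give rise to infinitely many non-isomorphic indecomposable summands on the other, and (ii)~bounded $\underline{\h}$-length is preserved. Corollary~\ref{C:1.5} then finishes both directions.

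For the implication $R\Rightarrow R^{\sharp}$, start with an infinite family $\{M_j\}_{j\in J}$ of pairwise non-isomorphic indecomposable $R$-lattices of common $\underline{\h}$-length (discarding the finitely many projectives) and pass to $\{M_j^{\sharp}\}$. The isomorphism $\overline{M_j^{\sharp}}\simeq M_j\oplus \Omega_R^1 M_j$ from Remark~\ref{R:reduction}, combined with Krull-Remak-Schmidt, forces $M_{j'}\in\{M_j,\Omega_R^1 M_j\}$ whenever $M_j^{\sharp}\simeq M_{j'}^{\sharp}$, so the assignment $j\mapsto [M_j^{\sharp}]$ is at most $2$-to-$1$. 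Reducing any summand decomposition of $M_j^{\sharp}$ modulo $u$ yields a summand decomposition of $M_j\oplus\Omega_R^1 M_j$, so $M_j^{\sharp}$ splits into at most two indecomposable pieces; a pigeonhole argument then produces infinitely many non-isomorphic indecomposable $R^{\sharp}$-summands of the $M_j^{\sharp}$. Since the $\underline{\h}$-length of a summand is bounded by that of the total, it suffices to bound $\underline{\h}(M_j^{\sharp})$.

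The main technical step is producing this uniform bound, for which I plan to apply Lemma~\ref{L:lem12}; this requires bounded $\beta_{R^{\sharp}}$ and an explicit faithful s.o.p. Lemma~\ref{L:betti} bounds $\beta_R(M_j)$, and the minimal resolution~\eqref{E:resolution} gives $\beta_{R^{\sharp}}(M_j^{\sharp})=2\beta_R(M_j)$. If $\xx$ is an $R$-faithful s.o.p.\ for $\{M_j\}$ (existing by Lemma~\ref{L:bound-sop}), I claim $(\xx,u)$ is an $R^{\sharp}$-faithful s.o.p.\ for $\{M_j^{\sharp}\}$: it is an s.o.p.\ because $R^{\sharp}/(\xx,u)=R/(\xx)$ is artinian, and it is faithful because (a)~$u\cdot 1_{M_j^{\sharp}}=\Omega_{R^{\sharp}}^1(u\cdot 1_{M_j})=\Omega_{R^{\sharp}}^1(0)$ vanishes stably since $uM_j=0$, and (b)~for $x\in\xx$, any factorization $x\cdot 1_{M_j}=gh$ through $R^n$ yields, after applying $\Omega_{R^{\sharp}}^1$, a factorization of $x\cdot 1_{M_j^{\sharp}}$ through $(R^n)^{\sharp}=R^{\sharp n}$, the latter being free because $R$ has the $R^{\sharp}$-resolution $0\to R^{\sharp}\overset{u}{\to} R^{\sharp}\to R\to 0$. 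Lemma~\ref{L:lem12} then delivers the desired bound.

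The converse is symmetric. Starting from an infinite family $\{N_j\}$ of non-isomorphic indecomposable $R^{\sharp}$-lattices of common $\underline{\h}$-length, consider $\{\overline{N_j}\}$ in $R$-$\textbf{lat}$ and use the split sequence $(\overline{N_j})^{\sharp}\simeq N_j\oplus \Omega_{R^{\sharp}}^1 N_j$ from Remark~\ref{R:0-homotopy} in place of Remark~\ref{R:reduction}; the at-most-$2$-to-$1$ count and the two-summand bound proceed identically. Boundedness of $\beta_R(\overline{N_j})=\beta_{R^{\sharp}}(N_j)$ is immediate by Nakayama since $u\in\mathfrak{m}_{R^{\sharp}}$. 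For a faithful s.o.p.\ over $R$, take an $R^{\sharp}$-faithful s.o.p.\ $\yy$ for $\{N_j\}$ and pass to its image $\bar{\yy}$ in $R$: this generates an $\mathfrak{m}_R$-primary ideal (as $R/(\bar{\yy})=R^{\sharp}/(\yy,u)$ is artinian), so it contains an s.o.p.\ $\xx$ of $R$; any factorization of $y\cdot 1_{N_j}$ through $R^{\sharp n}$ reduces modulo $u$ to a factorization of $\bar{y}\cdot 1_{\overline{N_j}}$ through $R^n$, making $\xx$ faithful for $\{\overline{N_j}\}$. Lemma~\ref{L:lem12} and Corollary~\ref{C:1.5} then conclude.
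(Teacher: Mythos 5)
Your proposal is correct, and its overall skeleton --- transferring families through $M\mapsto M^{\sharp}$ and $N\mapsto\overline{N}$, using $\overline{M^{\sharp}}\simeq M\oplus\Omega_R^1M$ and $(\overline{N})^{\sharp}\simeq N\oplus\Omega^1_{R^{\sharp}}N$ to get an at-most-two-to-one correspondence with at most two indecomposable summands on the other side, and finishing with Corollary~\ref{C:1.5} --- is exactly the paper's. Where you genuinely diverge is in the key step of bounding the $\underline{\h}$-length of the transferred family. The paper does this by invoking the stable Hom isomorphisms of Lemmas~\ref{lem13} and~\ref{lem20}, i.e.\ $\underline{\Hom}_{R^{\sharp}}(M^{\sharp},C^{\sharp})\simeq\underline{\Hom}_R(M\oplus\Omega_R^1M,C)$ and its companion for the approximations of $k$, which identify $\underline{\h}(M^{\sharp})$ with data computed over $R$. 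You instead bypass these computations entirely and route through Lemma~\ref{L:lem12}: you bound $\beta_{R^{\sharp}}(M_j^{\sharp})=2\beta_R(M_j)$ from the explicit matrix-factorization resolution~\eqref{E:resolution}, and you construct an explicit faithful s.o.p.\ $(\xx,u)$ for the sharped family (and, in the converse, extract one from the reduction $\bar{\yy}$), checking faithfulness generator by generator via condition~(4) of Lemma~\ref{L:stable-vanishing}. The verifications you supply --- that $u\cdot 1_{M_j^{\sharp}}$ is stably zero because $uM_j=0$, that $\Omega^1_{R^{\sharp}}$ of a map factoring through $R^n$ factors through $R^{\sharp n}$, and that factorizations reduce modulo $u$ --- are all sound. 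Your route has the advantage of being more self-contained (it needs only the resolution~\eqref{E:resolution} and the s.o.p.\ machinery of Section~4, not the finer stable Hom analysis), at the cost of the small extra bookkeeping with syzygies of stably trivial maps; the paper's route, once Lemmas~\ref{lem13} and~\ref{lem20} are in hand, gives the bound in one line and moreover pins down the $\underline{\h}$-length more precisely.
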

\begin{proof}
Suppose that $\{M_i\}_{i\in I}$ is an infinite set of non-isomorphic indecomposable $R$-lattices of the same $\underline{\h}$-length. By Lemmas \ref{lem14}, \ref{lem13}, and \ref{lem20}, the family $\F=\{M_i^{\sharp}\}_{i\in I}$ is an infinite  set of $R^{\sharp}$-lattices with bounded $\underline{\h}$-length.
Without loss of generality, we say assume that $\F$ consists of lattices of the same $\underline{\h}$-length. By Remark~\ref{R:reduction}$, \overline{M_i^{\sharp}}\simeq M_i \oplus \Omega_R M_i$, and therefore each $M^\sharp_i$ has at most two indecomposable summands. It follows that the 
number of such pairwise nonisomorphic summands is infinite.


The same argument, together with isomorphisms $\overline{N}^{\sharp}\simeq N\oplus\Omega_{R^{\sharp}} N$ for $R^{\sharp}$-lattices $N$, proves the reverse implication.
\end{proof}

\begin{theorem} \label{22}
Let $(R,\m, k)$ be a one-dimensional Cohen-Macaulay local ring with multiplicity 
$e(R)\geq 3$. If $R$ contains the residue field $k$ and $k$ is  infinite, then $R$ has strongly unbounded lattice type.
\end{theorem}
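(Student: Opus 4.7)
The plan is to produce an infinite family of pairwise non-isomorphic indecomposable $R$-lattices of uniformly bounded $\underline{h}$-length, and then invoke Corollary~\ref{C:1.5}. First, I would apply a slight modification of the construction from [Leuschke--Wiegand, Thm.~2.5] to obtain, under the hypotheses $e(R)\geq 3$, $k\subseteq R$, and $|k|=\infty$, an infinite family $\{M_n\}_{n\in\mathbb{N}}$ of pairwise non-isomorphic indecomposable maximal Cohen-Macaulay $R$-modules with multiplicities $e(M_n)$ uniformly bounded above by some constant $c$. The modification needed is mild: their method produces torsion-free mCM modules, and since $\dim R=1$ the punctured spectrum consists of the minimal (associated) primes of $R$, at which every torsion-free finitely generated module is automatically locally free; hence each $M_n$ actually lies in $R$-$\textbf{lat}$.

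Next, the uniform bound on multiplicities translates, via Lemma~\ref{lem11}, into a uniform bound $\beta(M_n)\leq b$ on the betti numbers of the family $\F:=\{M_n\}$. Since $\dim R=1$, a system of parameters is a single non-zerodivisor, and Lemma~\ref{L:bound-sop} (applied after a preliminary argument that the family has bounded $\underline{h}$-length locally, or directly by choosing $x\in\m^N$ for $N$ large enough that $x$ annihilates $\Ext^1_R(M_n,-)$ for all $n$) supplies an $\F$-faithful parameter $x$. Then Lemma~\ref{L:lem12} applied to $\F$ with this faithful s.o.p.\ yields a uniform bound on $l_R(\underline{h}(M_n))$. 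At this point $\F$ is a subcategory of bounded $\underline{h}$-length containing infinitely many pairwise non-isomorphic indecomposable lattices, and Corollary~\ref{C:1.5} immediately gives strongly unbounded $\textbf{lat}$-type for $R$.

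The main obstacle is verifying the first step, namely that the Leuschke--Wiegand construction can be adapted to produce a family already inside $R$-$\textbf{lat}$ (as opposed to merely inside mCM) with uniformly bounded multiplicity. In the original construction one typically fixes a minimal reduction $(x)$ of $\m$ (available because $k$ is infinite) and, exploiting $e(R)\geq 3$, parametrizes indecomposable modules by the residue field via families of ideals or extensions of a fixed rank. One must check two things: (i) the parameter producing the infinite family genuinely yields pairwise non-isomorphic modules (an argument based on invariants such as the conductor, the Fitting ideals, or the isomorphism classes of $\End_R(M_n)/(x)$), and (ii) the construction stays within a fixed multiplicity bound independent of the parameter. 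Once (i) and (ii) are established, the rest of the argument is a straightforward assembly of Lemmas~\ref{lem11}, \ref{L:bound-sop}, \ref{L:lem12}, and Corollary~\ref{C:1.5} as indicated above.
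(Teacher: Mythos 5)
Your overall strategy coincides with the paper's: adapt the Leuschke--Wiegand construction to get an infinite family of pairwise non-isomorphic indecomposable lattices of bounded multiplicity, pass to bounded $\underline{\h}$-length via Lemmas~\ref{lem11} and~\ref{L:lem12}, and conclude with Corollary~\ref{C:1.5}. However, two of the steps you treat as routine are exactly where the content lies, and one of them is justified by a false general principle. You claim that because $\dim R=1$, every torsion-free finitely generated module is locally free at the minimal primes, so the family automatically lands in $R$-$\textbf{lat}$. This fails when $R$ is not reduced, which the theorem does not exclude: for $R=k[[x,y]]/(y^3)$ (which satisfies $e(R)=3$), the module $R/(y)$ is torsion-free and maximal Cohen--Macaulay but $\bigl(R/(y)\bigr)_{(y)}$ is not free over $R_{(y)}$. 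The correct reason the modules $M_t$ are lattices is specific to the construction: each $M_t$ arises as a pullback fitting into an exact sequence $0\to M_t\to S\to L_t\to 0$ with $S=\bigcup_n\End_R(\m^n)$ a finite birational extension (so $S_\p\simeq R_\p$ off the maximal ideal) and $L_t$ of finite length, whence $(M_t)_\p\simeq S_\p\simeq R_\p$.

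The second gap is the existence of a \emph{uniform} faithful parameter for the infinite family. You cannot invoke Lemma~\ref{L:bound-sop} here without circularity (it presupposes bounded $\underline{\h}$-length, which you are trying to derive via Lemma~\ref{L:lem12}, which in turn needs the faithful s.o.p.), and your fallback --- ``choose $x\in\m^N$ for $N$ large enough that $x$ annihilates $\Ext^1_R(M_n,-)$ for all $n$'' --- is an assertion, not an argument: bounded multiplicity alone does not produce a single $N$ working for the whole family. The paper extracts the uniform annihilator from the same exact sequence $0\to M_t\to S\to L_t\to 0$: the fixed lattice $S$ satisfies $\m^{s}\Ext^1_R(S,-)=0$ for some $s$, and $\m L_t=0$ gives $\m\,\Ext^2_R(L_t,-)=0$, so the long exact sequence yields a power of $\m$ annihilating $\Ext^1_R(M_t,-)$ independently of $t$. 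In short, your assembly of Lemmas~\ref{lem11}, \ref{L:lem12} and Corollary~\ref{C:1.5} is correct once the family is in hand, but the verification that the family consists of lattices and admits a common faithful parameter cannot be waved through; both require the explicit pullback presentation over the birational extension $S$.
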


\begin{proof}
Set $S := \cup_{n\in \mathbb{N}}\End _R(\m^n)$, which is a finite birational extension of $R$ (i.e., an intermediate ring between $R$ and its total quotient ring $K$ such that $S$ is finitely generated as an $R$-module), 
see~\cite[Proposition 4.3]{leuschke2012cohen}. Thus $S_{\p}\simeq R_{\p}$ for any non-maximal prime $\p$ of $R$. By~[\textbf{idem}], 
$\beta_R (S)=e(R)>2$. Let $c$ be the conductor, that is, the largest ideal of $S$ that is contained in $R$. Set $B := S/c$ and $D : = B/\m B$. Suppose that 
$\alpha$ and $\gamma$ are elements of $D$ such that $1, \alpha,$ and
$\gamma$ are linearly independent over $k$. Assume that, for any $t\in k$,  $V_t$ is the $k$-subspace of $D$ generated by $\{ 1, \alpha+t\gamma+ t\lambda\},$ where $\lambda\in k$. 
Consider the following pullback diagram of $R$-modules
\[
\xymatrix
	{
	M_{t} \ar[d] \ar[r] 
	& S \ar[d] 
\\
	V_{t} \ar[r]
	& D
	}
\] 
where $ V_t \lra D$ is the inclusion map and $ S \lra D$ is the canonical map.
As was shown in the proof of~\cite[Theorem 2.5]{leuschke2013brauer}, $M_t$ is an indecomposable mCM $R$-module for any $t$ and all of these modules are pairwise non-isomorphic. The above pullback diagram gives rise to the following  commutative diagram 
\[
\xymatrix
	{
	0 \ar[r]
	& M_{t} \ar[d] \ar[r] 
	& S \ar[d] \ar[r]
	& L_{t} \ar@{=}[d] \ar[r]
	& 0
\\
	0 \ar[r]
	& V_{t} \ar[r]
	& D \ar[r]
	& L_{t} \ar[r]
	& 0
	}
\] 
with exact rows. Notice that $L_t$ is an artinian $R$-module and hence 
$(M_t)_{\p}\simeq S_{\p}\simeq R_{\p}$, for any non-maximal prime ideal $\p$ of $R$. Therefore, $\F=\{M_t\}_{t\in k}$ is an infinite set of indecomposable lattices. On the other hand, since $\m L_t=0$, there is $s>0$ such that 
$\m^s\Ext_R^1(M_t,-)=0$ for any $t\in k$. Indeed, $S$  is an $R$-lattice  and so there is $s$ such that  $\m^{s}\Ext_R^1(S,-)=0$. Also, $\m\Ext_R^2(L_t,-) = 0$. The claim now follows from the long exact sequence for the top row. Now choose  $x\in\m^s$ such that $x$ is a faithful system of parameters for $\F$. Since, for each $t$,  $e(M_t)\leq e(S)$, $\F$ is of bounded mCM type. Thus, by Lemmas \ref{L:lem12} and \ref{lem11}, $\F$ is of bounded $\underline{\h}$-length, implying that $\F$ has infinitely many non-isomorphic objects of the same $\underline{\h}$-length. By Theorem~\ref{T:BT-2}, $R$ has strongly unbounded lattice type.
\end{proof}

\begin{theorem}\label{21}
Let $S := k[[x_0, \cdots, x_d]]$, where $k$ is an infinite field, $f \in (x_0, \cdots, x_d)^2$, and $R := S/(f)$. If $e(R)\geq 3$, then $R$ has strongly unbounded lattice type.
 \end{theorem}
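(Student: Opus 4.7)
I would prove Theorem~\ref{21} by induction on $d = \dim R$. The base case $d = 1$ is immediate from Theorem~\ref{22}, since $R$ is then a one-dimensional Cohen-Macaulay local ring containing the infinite residue field $k$ with $e(R) \geq 3$.

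For the inductive step $d \geq 2$, the plan is to reduce the problem to dimension $d - 1$ and then invoke Theorem~\ref{strongly}. Since $k$ is infinite, I would choose a sufficiently generic linear form $\ell \in S$ whose image in $R$ is a non-zerodivisor, so that $R/\ell R \cong k[[y_0, \ldots, y_{d-1}]]/(\bar f)$ is a $(d-1)$-dimensional hypersurface with $e(R/\ell R) = e(R) \geq 3$ (the superficial preservation of multiplicity is standard over an infinite residue field). By the inductive hypothesis, $R/\ell R$ has strongly unbounded lattice type, so it admits an infinite family $\{\bar M_t\}_{t \in T}$ of pairwise non-isomorphic indecomposable lattices of the same bounded $\underline{\h}$-length. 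Theorem~\ref{strongly} further gives strongly unbounded lattice type for the double branched cover $(R/\ell R)^{\sharp} = k[[y_0, \ldots, y_{d-1}, v]]/(\bar f + v^2)$, which, like $R$, is a $d$-dimensional hypersurface over $k$. From the family over $R/\ell R$, I would produce $R$-lattices $M_t := \Omega_R^1 \bar M_t$ by viewing each $\bar M_t$ as an $R$-module and taking its first $R$-syzygy: the depth lemma applied to $0 \to M_t \to R^n \to \bar M_t \to 0$ shows $M_t$ has depth $d$, and a localization check at non-maximal primes $\mathfrak{p}$ (either $\ell \notin \mathfrak{p}$, killing $\bar M_t$, or $\ell \in \mathfrak{p}$ and $(\bar M_t)_{\mathfrak{p}}$ is free over $R_{\mathfrak{p}}/\ell$, hence of projective dimension $1$ over $R_{\mathfrak{p}}$) shows $M_t$ is locally projective there, so $M_t$ is an $R$-lattice. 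An application of Corollary~\ref{C:1.5} would then complete the proof.

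The main obstacle is verifying three properties of the lifted family $\{M_t\}$: pairwise non-isomorphism, indecomposability, and bounded $\underline{\h}$-length. Pairwise non-isomorphism is the most delicate: via Schanuel's lemma and reduction modulo $\ell$, $M_s \cong M_t$ forces $\bar M_s$ to agree with $\bar M_t$ or with $\Omega_{\bar R}\bar M_t$ in the stable category of $\bar R := R/\ell R$, and one extracts an infinite subfamily avoiding these $\Omega_{\bar R}$-shift coincidences. Indecomposability, if not automatic, is secured by passing to a distinguished indecomposable summand whose isomorphism class still depends on $t$. Bounded $\underline{\h}$-length follows from change-of-rings formulas identifying $\underline{\Hom}_R(M_t, -)$ with $\Ext_R^1(\bar M_t, -)$, which via the regular element $\ell$ reduces to stable Hom groups over $\bar R$ (bounded by hypothesis). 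An alternative route that bypasses some of these delicacies is to compare lattices over $(R/\ell R)^{\sharp}$ with those over $R$ directly, through the matrix factorization machinery of Section~6 (Lemmas~\ref{lem13}, \ref{lem14}, and~\ref{lem20}) combined with the Cohen-Macaulay approximation of Proposition~\ref{P:approximation}.
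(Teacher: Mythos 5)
Your base case and the overall inductive shape match the paper, but your inductive step takes a genuinely different route, and that route as sketched has a real gap. The paper does not cut down by a hyperplane section: invoking Leuschke--Wiegand it changes coordinates so that $f=b+x_d^2$, hence $R=A^{\sharp}$ with $A=k[[x_0,\ldots,x_{d-1}]]/(b)$ of dimension $d-1$, applies the inductive hypothesis to $A$, and then Theorem~\ref{strongly} transports the infinite family from $A$ to $A^{\sharp}=R$ in one stroke. All of the hard module-theoretic transport (pairwise non-isomorphism up to at most two indecomposable summands, preservation of $\underline{\h}$-length) was already done in Section~6 for the functor $M\mapsto M^{\sharp}$, using the explicit matrix factorization of $f+u^2$ and the splitting $\overline{M^{\sharp}}\simeq M\oplus\Omega_R^1M$ of Remark~\ref{R:reduction}. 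Your observation that $(R/\ell R)^{\sharp}$ has strongly unbounded lattice type is true but does not help: that ring has multiplicity $2$, so it is not isomorphic to $R$ when $e(R)\geq 3$, and the paper provides no comparison between its lattices and those of $R$.

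The concrete gap is in the lifting $\bar M_t\mapsto M_t:=\Omega_R^1\bar M_t$. Every property you need --- pairwise non-isomorphism after discarding finitely many collisions, indecomposability of a distinguished summand, and a uniform bound on $\underline{\h}(M_t)$ --- would follow from the hyperplane-section analogue of Remark~\ref{R:reduction}, namely $\overline{\Omega_R^1N}\simeq N\oplus\Omega_{\bar R}^1N$ for maximal Cohen--Macaulay $\bar R$-modules $N$. But the paper establishes that splitting only for the double branched cover, where the special shape of the matrix factorization of $f+u^2$ furnishes an explicit contracting homotopy (Remark~\ref{R:0-homotopy}, which even divides by $2$). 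For a general linear form $\ell$ one only obtains a short exact sequence $0\to N\to\overline{\Omega_R^1N}\to\Omega_{\bar R}^1N\to 0$, which need not split, so the Schanuel/Krull--Remak--Schmidt bookkeeping you invoke to rule out $M_s\simeq M_t$ does not go through as stated; likewise the identification of $\underline{\h}(M_t)$ with stable Hom groups over $\bar R$ (including matching the approximation $G$ of the simple module over $R$ with the corresponding one over $\bar R$, as Lemma~\ref{lem20} does for the cover) is asserted rather than proved. To make your route rigorous you would essentially have to redo Section~6 for hyperplane sections; the paper sidesteps this by arranging, via the coordinate change, for the descent in dimension to literally be a double branched cover.
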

 
 \begin{proof}
 We induct on $d$. When $d=1$, the result follows from Theorem \ref{22}. Now suppose that $d>1$ and the result has been proved for all values smaller
 than $d$. In view of \cite[Theorem 9.7]{leuschke2012cohen}, we may assume that $f=b+x_d^2$ with $b\in (x_0, \cdots, x_{d-1})^2k[[x_0, x_1, \cdots, x_{d-1}]]$ and  $R=A^{\sharp}$, where $A=k[[x_0, \cdots, x_{d-1}]]/(b)$. As $\dim A= d-1$, the induction assumption implies that~$A$ has strongly unbounded lattice type. The desired result now follows from Theorem~\ref{strongly}.
\end{proof}

\begin{remark}\label{30} Analogs of the Brauer-Thrall conjectures have also been considered for mCM modules over general CM local rings, see for example \cite{dieterich1981representation, yoshino1987brauer, leuschke2013brauer}. In that context, the multiplicity of an mCM module replaces the length of a finitely generated module over an artin algebra.
We should  emphasize that the analog of the first Brauer-Thrall theorem for mCM modules fails in general by a counterexample of 
Dieterich~\cite{dieterich1981representation}.
In addition, for rings with $e(R)\leq 2$, Bass \cite{bass1963ubiquity} showed that every indecomposable mCM module is isomorphic to an ideal of~$R$, implying that~$R$ has infinitely many non-isomorphic indecomposable mCM modules of the same multiplicity whenever $R$ is not of finite mCM type. But it is not of strongly unbounded mCM type, i.e., there does not exist an infinite sequence $n_1<n_1<\cdots$  of positive integers such that there are, for any~$i$, infinitely many non-isomorphic indecomposable mCM modules of multiplicity $n_i$. This means that the multiplicity-based mCM type Brauer-Thrall and the second Brauer-Thrall theorems do not hold for $R$. However, over several classes of Cohen-Macaulay rings, analogs of these conjectures are  known to be true. Namely, Dieterich \cite{dieterich1981representation} and Yoshino \cite{yoshino1987brauer} showed the validity of the first Brauer-Thrall theorem for complete equicharacteristic CM isolated singularities over a perfect field. Moreover, Leushke and Wiegand \cite{leuschke2013brauer} have proved this theorem when $R$ is an equicharacteristic excellent ring with algebraically closed residue field. In an effort to prove a counterpart of a  result of  Kn\"{o}rrer and Buchweitz-Greuel-Schreyer \cite{knorrer1987cohen, buchweitz1987cohen} on finite mCM type for bounded mCM type, Leushke and Wiegand \cite [Proposition 1.5]{leuschke2005hypersurfaces}, showed that~$R$ and~$R^{\sharp}$ have bounded mCM type simultaneously, where $R=S/(f)$, $S=k[[x_0, \cdots, x_d]]$, and $f$ is a nonzero and non-unit element of $S$.
\end{remark}

{\bf Acknowledgments}
This project was initiated in October  2015, while the third author was visiting at the Northeastern University. The third author thanks the NU, and in particular Professor Alexander Martsinkovsky, for providing a stimulating research environment. The work of the third author has been in part supported by a grant from  IPM (No. 96130215). He also would like to thank the Center of Excellence for Mathematics (University of Isfahan).

\end{document}